\newcommand{\kb}[1]{{\color{red}\bf[KB: #1]}}
\numberwithin{equation}{section}
\theoremstyle{plain}
\renewcommand*{\backref}[1]{\ifx#1\relax \else Page #1 \fi}
\renewcommand*{\backrefalt}[4]{%
  \ifcase #1 \footnotesize{(Not cited.)}%
  \or        \footnotesize{(Cited on page~#2.)}%
  \else      \footnotesize{(Cited on pages~#2.)}%
  \fi
}
\title[Normal Approximation for Geometric and Topological Statistics ]{A Flexible Approach for Normal Approximation of Geometric and Topological Statistics}
\author{Zhaoyang Shi}
\author{Krishnakumar Balasubramanian}
\author{Wolfgang Polonik}
\address{\hspace{-0.16in}Department of Statistics, University of California, Davis.\newline Email: \texttt{zysshi@ucdavis.edu}, \texttt{ kbala@ucdavis.edu}, \texttt{wpolonik@ucdavis.edu}}
\newtheorem{Theorem}{Theorem}[section]
\newtheorem{Assumption}{Assumption}[section]
\newtheorem{Proposition}{Proposition}[section]
\newtheorem{Remark}{Remark}[section]
\newtheorem{Example}{Example}[section]
\newtheorem{Corollary}{Corollary}[section]
\newtheorem{Definition}{Definition}[section]
\newtheorem{Lemma}{Lemma}[section]
\newcommand{\mbb}{\mathbb}
\newcommand{\mcal}{\mathcal}
\DeclareMathOperator{\Var}{Var}
\begin{document}

\maketitle

\begin{abstract}
We derive normal approximation results for a class of stabilizing  functionals of binomial or Poisson point process, that are not necessarily expressible as sums of certain score functions. Our approach is based on a flexible notion of the add-one cost operator, which helps one to deal with the second-order cost operator via suitably appropriate first-order operators. We combine this flexible notion with the theory of strong stabilization to establish our results. We illustrate the applicability of our results by establishing normal approximation results for certain geometric and topological statistics arising frequently in practice. Several existing results also emerge as special cases of our approach.
\end{abstract}


\section{Introduction}

Let $(\mbb{X},\mcal{F})$ be a metric measure space equipped with a $\sigma$-finite measure $\mbb{Q}$ and a metric $d: \mbb{X}\times \mbb{X}\rightarrow [0,\infty)$. For $s \ge 1$, let $\mcal{P}_{s}$ denote the canonical Poisson process on $\mbb{X}$ with intensity measure $\lambda:=s\mbb{Q}$, and for $\mbb{Q}$ a probability measure, let $\xi_{n}$ denote the binomial process associated to $\mbb{Q}$. Let  $d_{K}(Y,Z)$ denote the Kolmogorov distance between two random variables $Y,Z$, i.e., $d_{K}(Y,Z):=\sup_{t\in\mbb{R}}|~\mbb{P}(Y\le t)-\mbb{P}(Z\le t)|$. In this work, we study normal approximation results for real-valued functionals $F_{s}(\mcal{P}_{s})$ and $F_{n}(\xi_{n})$ respectively of the Poisson and the binomial point processes in the Kolmogorov metric under relatively flexible assumptions on the functionals. In particular, motivated by geometric and topological  statistics, we focus on the case when the functionals $F_s$ and $F_n$ are not necessarily expressible as sums of certain score functions, and on obtaining presumably optimal bounds in this case.

Our proof techniques are based on the widely-used concept of \emph{stabilization}. Indeed, since the use of stabilization concepts to establish central limit theorems for Poisson-based minimal spanning tree in~\cite{kesten1996central}, these concepts have been widely developed as a general tool to establish normal approximation rates for various functionals of Poisson and Binomial point process. We refer the interested reader to~\cite{penrose2001central, penrose2005multivariate, baryshnikov2005gaussian, penrose2005normal,schreiber2010limit, last2016normal, chatterjee2017minimal,lachieze2017new} and reference therein for details. In particular,~\cite{last2016normal} develops normal approximation bounds for a fairly general class of functions of Poisson processes by combining Malliavin-Stein techniques~\cite{peccati2010stein,peccati2016stochastic}, second-order Poincar\'e inequalities~\cite{chatterjee2009fluctuations,nourdin2009second} and stabilization concepts, and by using the iterated add-one cost operator, also called second order cost operator. Considering the case of functionals expressible as a sum of exponentially stabilizing score functions,~\cite{lachieze2019normal} establishes user-friendly normal approximation results based on~\cite{last2016normal}.  The work of~\cite{lachieze2020quantitative} introduces bounds for general functionals of Poisson process. Their method does not involve the hard-to-evaluate iterated add-one cost operators but uses the add-one cost operator at two different scales, an approach pioneered by~\cite{chatterjee2017minimal} for the case of Poisson-based minimal spanning trees. However, their generality comes at the cost of sub-optimality -- in general, the bounds based on~\cite{lachieze2020quantitative} are sub-optimal compared to those of~\cite{last2016normal}. Furthermore, for the case of functionals that are expressible as a sum of exponentially stabilizing score functions, the bounds of~\cite{lachieze2020quantitative} necessarily lead to sub-optimal rates. 

Hence, the following question remains: \emph{Can one obtain presumably optimal bounds for general functionals that automatically result in presumably optimal bounds when specialized to the case of  functionals that can be expressed as sums of score functions}. Following~\cite{lachieze2019normal}, we use the term \emph{presumably optimal} to refer to the case when the order of the normal approximation is the same as that of a sum of i.i.d. random variables. In this work, we answer this question in the affirmative for a class of functionals. Similar to~\cite{lachieze2020quantitative}, our approach is based on the idea of using the add-one cost operator at two scales. However, in contrast to their work, we use it to directly simplify the evaluation of the iterated add-one cost operators. When specialized to the case of sums of score functions, such an approach recovers the presumably optimal results of~\cite{lachieze2019normal}. To summarize, we make the following contributions:
\begin{itemize}[noitemsep]
    \item In Definition~\ref{defflex}, we introduce a flexible notion of the add-one cost operator with a general set $A_{x}$ that allows to (relatively) easily evaluate computations with the iterated add-one cost or second-order difference operators, for general functionals of Poisson and binomial point process that are not necessarily a sum. 
    \item In our main results, Theorems~\ref{MTPoi} and~\ref{MTbino}, we provide normal approximation results for functionals of Poisson and binomial point processes respectively. In particular, the functionals do not necessarily need to be expressible as sums of certain score functions. 
    \item We illustrate the applicability of our approach by deriving normal approximation results for several geometric and topological statistics. Specifically, in Theorem~\ref{totalknn} and \ref{weightedknn} we use our approach to derive normal approximation results for the total edge length of $k$-Nearest Neighbor graph and weighted $k$-Nearest Neighbor graph based Shannon entropy estimators. In Theorem~\ref{thm:eulerc}, we derive results for Euler Characteristic, which is an elementary statistics widely used in the field of topological data analysis. Finally, we discuss the applicability of our approach for the minimal spanning tree problem in Theorem~\ref{mst}, by recovering existing results via our approach.
\end{itemize}

\textbf{Organization.} The rest of the paper is organized as follows. In Section~\ref{sec:prelim}, we introduce the basics of point processes, stabilization concepts and required assumptions. In Section~\ref{sec:main}, we present our main theorems and discuss relations to existing results. In Section~\ref{sec:app}, we discuss applications of our results to geometric and topological statistics. The proofs are provided in Section~\ref{proofs} and \ref{proof2}.

\section{Preliminaries}\label{sec:prelim}

\subsection{Point Process Basics}

Let $(\mbb{X},\mcal{F})$ be a measure space with a $\sigma$-finite measure $\mbb{Q}$ and a metric $d: \mbb{X}\times \mbb{X}\rightarrow [0,\infty)$. Let \textbf{N} be the set of $\sigma$-finite counting measures on $\mbb{X}$, which can be interpreted as point configurations in $\mbb{X}$. Thus, we treat the elements from \textbf{N} as sets. The set \textbf{N} is equipped with the smallest $\sigma$-field $\mcal{N}$ such that the maps $m_{A}:\textbf{N}\rightarrow \mbb{N}\cup\{0,\infty\},\mcal{M}\mapsto\mcal{M}(A)$ are measurable for all $A\in\mcal{F}$; see \cite{kallenberg1997foundations} and \cite{last2011poisson}. A point process $\eta$ is a random element in \textbf{N}. Denote by $\textbf{F}(\textbf{N})$ the class of all measurable functions $f:\textbf{N}\rightarrow \mbb{R}$, and by $L^{0}(\mbb{X}):=L^{0}(\mbb{X},\mcal{F})$ the class of all real-valued, measurable functions $F$ on $\mbb{X}$. Note that, as $\mcal{F}$ is the completion of $\sigma(\eta)$, each $F\in L^{0}(\mbb{X})$ can be written as $F=f(\eta)$ for some measurable function $f\in \textbf{F}(\textbf{N})$. Such a mapping $f$, called a \textit{representative} of $F$, is $\mbb{Q}\circ \eta^{-1}$-a.s. uniquely defined. In order to simplify the discussion, we make this convention: whenever a general function $F$ is introduced, we will select one of its representatives and denote such a representative mapping by the same symbol $F$. Throughout this paper, we denote by $L_{\eta}^{2}(\mbb{X})$ the space of all square-integrable functions $F$ of a point process $\eta$ with $\mbb{E}F^{2}(\eta)<\infty$. We mainly consider two different classes of point processes: Poisson point process and binomial point process. 


\begin{Definition}[Poisson Point Process]
A Poisson point process with intensity measure $\lambda$ is a point process $\mcal{P}(\lambda)$ on $\mbb{X}$ with the following two properties:
\begin{enumerate}[noitemsep]
    \item  $\forall B\in \mathcal{F}$, $\mcal{P}(\lambda)(B)$ is a Poisson random variable with parameter $\lambda(B)$.
 	\item $\forall m\in \mathbb{N}_{+}$ and for any pairwise disjoint sets $B_{1},B_{2},...,B_{m}\in \mcal{F}$, we have that the random variables $\mcal{P}(\lambda)(B_{1}),\mcal{P}(\lambda)(B_{2}),\ldots ,\mcal{P}(\lambda)(B_{m})$ are independent.
\end{enumerate}	
\end{Definition}

\begin{Definition}[Binomial Point Process]
	Let $P$ be a probability distribution and $n$ be a fixed positive integer. Let $X_{1},X_{2},...,X_{n}$ be i.i.d. random variables sampled from $P$. The binomial point process $\xi_{n}$ based on $P$ and $n$ is defined as $\xi_n:=\sum_{i=1}^{n}\delta_{X_{i}},$ where $\delta$ is the Dirac measure.
\end{Definition}

We now describe the setting for developing normal approximations of functionals of Poisson and binomial point processes. For $s\ge 1$, let $\lambda:=s\mbb{Q}$ be the intensity measure of Poisson point process $\mcal{P}(\lambda) \coloneqq \mcal{P}_{s}$. For the case when $\mbb{Q}$ is the probability measure, let $\xi_{n}$ be the binomial point process based on $\mbb{Q}$ and $n$. Consider square-integrable functionals of these two point processes, i.e., $F(\mcal{P}_{s}):=F_{s}(\mcal{P}_{s})\in L_{\mcal{P}_{s}}^{2}(\mbb{X})$ and $F(\xi_{n}):=F_{n}(\xi_{n})\in L_{\xi_{n}}^{2}(\mbb{X})$. We then seek upper bounds for the following two quantities:
\begin{equation*}
	d_{K}\left(\frac{F_{s}(\mcal{P}_{s})-\mbb{E}F_{s}(\mcal{P}_{s})}{\sqrt{\Var F_{s}(\mcal{P}_{s})}},N\right), \qquad\text{and}\qquad	d_{K}\left(\frac{F_{n}(\xi_{n})-\mbb{E}F_{n}(\xi_{n})}{\sqrt{\Var F_{n}(\xi_{n})}},N\right),
\end{equation*}
where $N$ is the standard normal random variable.

\subsection{Stabilization} The notion of stabilization is widely used in deriving normal approximation rates for functionals of Poisson or binomial point processes~\cite{kesten1996central,penrose2001central, penrose2005multivariate, penrose2005normal}. We start with introducing notions of  stabilization for functionals which are not necessarily representable as sums of sore functions. 

\begin{Definition}[Add-one Cost Operator]\label{addonecost}
	Let $F$ be a measurable functional of a point process $\eta$ on $(\mbb{X},\mcal{F})$. The family of Add-One Cost Operators, $D=(D_{x})_{x\in \mbb{X}}$, are defined as 
	\begin{align*}
	D_{x}F(\eta):=F(\eta\cup\{x\})-F(\eta).
	\end{align*}
	Similarly, we can define a second-order cost operator (also called iterated add-one cost operator): for any  $x_{1},x_{2}\in\mbb{X}$,
	\begin{align*}
	D_{x_{1},x_{2}}F(\eta):=F(\eta\cup\{x_{1},x_{2}\})-F(\eta\cup\{x_{1}\})-F(\eta\cup\{x_{2}\})+F(\eta).
	\end{align*}
\end{Definition}
In addition, we define for any $y\in\mbb{X}$,
\begin{align*}
    D_{x}F^{y}(\eta):=F(\eta\cup\{y\}\cup\{x\})-F(\eta\cup\{y\}).
\end{align*}
Clearly, when $\{y\}=\emptyset$, it degenerates into the add-one cost operator.

Based on the add-one cost operator introduced above, we next introduce weak and strong stabilization in the context of functionals $F_{s}$ of the Poisson point process $\mcal{P}_s$. Similar definitions hold automatically for the case of functionals $F_{n}$ of the binomial Point Process $\xi_n$ .

\begin{Definition}[Weak Stabilization]
	The functional $F_{s}$ is said to be weakly stabilizing at $x\in \mbb{X}$, if and only if there exists a random variable $\Delta_{x}$ such that for any sequence $(W_{m})_{m\ge 1}$ in $\mcal{F}$ tending to $\mbb{X}$, as $m\rightarrow \infty$,  we have $D_{x}F_{s}(W_{m})\rightarrow \Delta_{x}$, almost surely.
\end{Definition}



\begin{Definition}[Strong Stabilization]\label{strongs}
	The functional $F_{s}$ is said to be strongly stabilizing at $x\in \mbb{X}$, if and only if there exists an almost surely finite random variable $R_{x}$, which is referred to as the radius of stabilization, such that for all finite $\mbb{A}\subset \mbb{X} \backslash B_{x}(R_{x})$, with probability 1,
	\begin{align*}
	D_{x}F_{s}((\mcal{P}_{s}\cap B_{x}(R_{x}))\cup \mbb{A})=D_{x}F_{s}(\mcal{P}_{s}\cap B_{x}(R_{x})),
	\end{align*} 
	where $B_{x}(R_{x})\coloneqq \{y\in\mbb{X}:d(x,y)\le R_{x}\}$.
\end{Definition}

Clearly, strong stabilization implies weak stabilization. In some cases, the functionals $F_{s}$ and $F_{n}$ can be represented as a sum of the form
\begin{equation}\label{scoreform}
F_{s}(\mcal{P}_{s}):=\sum_{x\in \mcal{P}_{s}}f_{s}(x,\mcal{P}_{s}),\qquad \text{and}\qquad
F_{n}(\xi_{n}):=\sum_{x\in\xi_{n}}f_{n}(x,\xi_{n}),
\end{equation}
where $f_{s},f_{n}$ are called {\em score functions}. In this case, there exists a useful notion of stabilization based on the score functions. For simplicity, we still state the definition for functionals of Poisson point process; the binomial case is defined similarly.


\begin{Definition}[Score-based Stabilization~\cite{lachieze2019normal}]
		The score function $f_{s}$ is said to be stabilizing at $x\in\mbb{X}$, if and only if there exists an almost surely finite random variable $R_{x}$ (the radius of stabilization) such that for all finite $\mbb{A}\subset \mbb{X} \backslash B_{x}(R_{x})$, we have
\begin{align*}
f_{s}(x,(\mcal{P}_{s}\cap B_{x}(r_{x}))\cup \mbb{A})=f_{s}(x,\mcal{P}_{s}\cap B_{x}(r_{x})).
\end{align*}
\end{Definition}
Informally speaking, the above definition posits that the value of the score function $f_{s}$ will not be affected by the points outside the ball centered at $x$ with radius $R_{x}$. For discussing the relation between strong stabilization and score-based stabilization, we present the following simple result.

\begin{Proposition}\label{prop 2.1}
	Given any $F_{s}=\sum_{y\in \mcal{P}_{s}}f_{s}(y,\mcal{P}_{s})$, we have for all $x\in\mbb{X}$, 
	\begin{align}\label{Ff}
	D_{x}F_{s}(\mcal{P}_{s})=f_{s}(x,\mcal{P}_{s}\cup \{x\})+\sum_{y\in\mcal{P}_{s}}D_{x}f_{s}(y,\mcal{P}_{s}). 
	\end{align}	
\end{Proposition}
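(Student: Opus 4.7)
The plan is to proceed by a direct expansion of the add-one cost operator applied to the sum $F_s$. Writing out $D_x F_s(\mathcal{P}_s) = F_s(\mathcal{P}_s \cup \{x\}) - F_s(\mathcal{P}_s)$ and then substituting the score-function representation on each side should reduce the identity to bookkeeping. The key observation is that in $F_s(\mathcal{P}_s \cup \{x\})$ the summation index runs over $\mathcal{P}_s \cup \{x\}$, so one can split off the contribution of the new point $x$ from the contributions of the original points $y \in \mathcal{P}_s$.

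Concretely, I would first write
\[
F_s(\mathcal{P}_s \cup \{x\}) = f_s(x, \mathcal{P}_s \cup \{x\}) + \sum_{y \in \mathcal{P}_s} f_s(y, \mathcal{P}_s \cup \{x\}),
\]
isolating the term corresponding to the newly added point. Subtracting $F_s(\mathcal{P}_s) = \sum_{y \in \mathcal{P}_s} f_s(y, \mathcal{P}_s)$ then gives
\[
D_x F_s(\mathcal{P}_s) = f_s(x, \mathcal{P}_s \cup \{x\}) + \sum_{y \in \mathcal{P}_s} \bigl[f_s(y, \mathcal{P}_s \cup \{x\}) - f_s(y, \mathcal{P}_s)\bigr].
\]
The bracketed expression is exactly $D_x f_s(y, \mathcal{P}_s)$ by Definition~\ref{addonecost} applied to the functional $\eta \mapsto f_s(y, \eta)$ with $y$ held fixed, which produces \eqref{Ff}.

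There is essentially no obstacle here; the statement is a combinatorial identity rather than a probabilistic result, and it holds pointwise for every realization of $\mathcal{P}_s$. The only mild subtlety is a notational one: one should remark that if $x \in \mathcal{P}_s$ then $\mathcal{P}_s \cup \{x\} = \mathcal{P}_s$ and both sides vanish (the sum $\sum_{y \in \mathcal{P}_s} D_x f_s(y, \mathcal{P}_s)$ is zero and $f_s(x, \mathcal{P}_s \cup \{x\}) - f_s(x,\mathcal{P}_s) = 0$ cancels through the reindexing), so the identity is consistent in all cases. Since Poisson processes are almost surely simple, this degenerate case has probability zero and does not affect any subsequent use of the formula.
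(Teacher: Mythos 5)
Your proof is correct and follows essentially the same route as the paper: expand $F_s(\mathcal{P}_s\cup\{x\})$, split off the term for the new point $x$, and identify the remaining difference of sums as $\sum_{y\in\mathcal{P}_s}D_x f_s(y,\mathcal{P}_s)$. The extra remark about the degenerate case $x\in\mathcal{P}_s$ is a harmless addition not present in the paper's proof.
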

\begin{proof}[Proof of Proposition~\ref{prop 2.1}]
We have 
	\begin{align}\label{3f}
	D_{x}F_{s}(\mcal{P}_{s})&=F_{s}(\mcal{P}_{s}\cup\{x\})-F_{s}(\mcal{P}_{s}) \nonumber \\
	&=\sum_{y\in\mcal{P}_{s}\cup\{x\}}f_{s}(y,\mcal{P}_{s}\cup\{x\})-\sum_{y\in \mcal{P}{s}}f_{s}(y,\mcal{P}_{s})\nonumber \\
	&=f_{s}(x,\mcal{P}_{s}\cup\{x\})+\left(\sum_{y\in\mcal{P}_{s}}f_{s}(y,\mcal{P}_{s}\cup\{x\})-\sum_{y\in\mcal{P}_{s}}f_{s}(y,\mcal{P}_{s})\right)\\
	&=f_{s}(x,\mcal{P}_{s}\cup\{x\})+\sum_{y\in\mcal{P}_{s}}D_{x}f_{s}(y,\mcal{P}_{s})\nonumber, 
	\end{align}
which completes the proof. 
\end{proof}
\begin{Remark} We now make the following remarks.
\begin{itemize}[noitemsep]
\item[(1)] Strong stabilization focuses on the cost function of the functional $F$ while the score-based stabilization is assumed on the score functions $f$. Equation \eqref{3f} reveals this relationship. It plays an important role in Section \ref{Shannon app} and its proof.
\item[(2)] Strong stabilization is more general than score-based stabilization in that strong stabilization does not restrict the form of the functional to be expressible as a sum of scores. Furthermore, the same functional might be expressible in multiple ways as sums of scores. Depending on the representation, it might be easier or harder to compute the radius of stabilization and also the moments of the score functions (which also play a crucial role - see below). Strong stabilization, however, provides an approach to directly work with the functional $F$ itself. 
\end{itemize}
\end{Remark}


The following example from the literature on Topological Data Analysis (TDA), further illustrates the aforementioned remarks. Readers unfamiliar with the basics of TDA are directed to the elementary definitions provided in Appendix~\ref{sec:tda}. We also refer to~\cite{edelsbrunner2010computational, boissonnat2018geometric} for more on the basics of TDA.

 
\begin{Example}[Euler Characteristic]	\label{egec}
	Given a simplicial complex $K$, the Euler characteristic is defined as
	\begin{equation*}
	\chi(K):=\sum_{k=0}^{\infty}(-1)^{k}\#\{S_{k}\},
	\end{equation*}
	where $\#\{S_{k}\}$ is the number of simplices of dimension $k$. 
\end{Example}
Typically, the simplicial complex $K$, is taken to be the Vietoris-Rips complex (VR complex) or the \v{C}ech complex, constructed over a point cloud sampled from binomial or Poisson point processes $\xi_n$ or $\mathcal{P}_s,$ respectively. In this case, we denote the simplicial complex as $K(\xi_n)$ or $K(\mathcal{P}_s)$ to denote the dependency on the underlying point process explicitly. We now discuss the stabilization properties of the above statistic. While it is possible to express the Euler characteristic as a sum of certain score functions, it is not required to do so, as the Euler characteristic is strongly stabilizing with radius of stabilization $R_{x}=2r$ for the \u Cech and the VR-complex; see \cite{krebs2021approximation}. 

The following example, on the total edge length of a $k$-nearest neighbor graphs, is a canonical example of a geometric statistic that satisfies score-based stabilization and strong stabilization.

\begin{Example}[$k$-Nearest Neighbor ($k$-NN) Graphs]\label{eg:knn}
	Consider a configuration of a Poisson point process $\mcal{P}_{s}$, where here we represent $\mcal{P}_s$ by a random number of (conditionally) i.i.d. points $X_i$, i.e. $\mcal{P}_{s}=\{X_{i}\}_{i=1}^{|\mcal{P}_{s}|}$. For some $k\in \mbb{N}_{+}$, and for every integer $1\le j\le k$, denote by $X_{j,i}$ the $j$-nearest neighbor of $X_{i},$ i.e. $X_{j,i}$ is the $j$th closest point to $X_{i}.$ Furthermore, let $\rho_{j,i}$ denote the distance between $X_{j,i}$ and $X_{i}$. Then, the (undirected) $k$-NN graph $NG_{k}(\mcal{P}_{s})$ is the graph with the vertex set $V:=\mcal{P}_{s}$ and an edge $x\sim y$ if $y$ is some $j$-nearest neighbor of $X$ and (or) $x$ is some $j$-nearest neighbor of $y$. For $\vartheta>0$, we define 
	\begin{align}\label{eq:fsforknn}
	f_{s}(x,\mcal{P}_{s}):=
    \left\{
    \begin{aligned}
    &\sum_{x\sim y}\frac{1}{2}d(x,y)^{\vartheta}, \quad \text{if $x,y$ are mutual  $k$-nearest neighbors},\\
    &\sum_{x\sim y}d(x,y)^{\vartheta}, \quad \text{if $x,y$ are not mutual  $k$-nearest neighbors}.
    \end{aligned}
    \right.
	\end{align}
The total edge length is defined as
	\begin{equation*}
	F_{s}:=\sum_{x\in \mcal{P}_{s}}f_{s}(x,\mcal{P}_{s}).
	\end{equation*}
\noindent According to \cite{lachieze2019normal}, the total edge length statistic satisfies score-based stabilization. Additionally, by the proof of \cite[Lemma 6.1]{penrose2001central}, we also have that it satisfies strong stabilization with the radius of stabilization being $R_{x}=4R$, where $R$ is defined in the following way: for each $t>0$, construct six disjoint equilateral triangles $T_{j}(t)$, $1\le j\le 6$, such that the origin is a vertex of each triangle, such that each triangle has edge length $t$ and such that $T_{j}(t)\subset T_{j}(u)$ whenever $t<u$. Then, define $R$ to be the minimum $t$ such that each triangle $T_{j}(t)$ contains at least $k
+1$ points from $\mcal{P}_{s}$.
\end{Example}
	

By definition, strong stabilization only focuses on the first-order add-one cost operator. In order to deal with second-order cost operators, which are also crucial in obtaining our normal approximation results, we introduce the following flexible notion of add-one cost operators.

\begin{Definition}[Flexible Add-One Cost]\label{defflex}
	For any point process $\eta$ in $(\mbb{X},\mcal{F})$, any $x\in\mbb{X}$ and a set $A_{x}\in\mcal{F}$ (that may or may not depend on $x$), the flexible add-one cost operator for the functional $F$ is defined as
$$D_{x}F(A_{x}):=D_{x}F(A_{x})(\eta):=D_{x}F(\eta|_{A_{x}}):=F(\left(\eta|A_{x}\right)\cup\{x\})-F(\eta|A_{x}),$$
where we denote by $\eta|A_{x}$ the restriction of the point process $\eta$ to the set $A_{x}$ (see, for example,~\cite{lachieze2020quantitative}). 
\end{Definition}
Informally speaking, we introduce the flexible add-one cost $D_{x}F(A_{x})$ by only observing the point process in the `window' $A_x$. Obviously, if one sets $A_{x}=\mbb{X}$, the flexible add-one cost function degenerates into the classical add-one cost function in Definition \ref{addonecost}. The following proposition, whose proof is immediate by simply using the definition of the second-order cost function, provides a way to deal with the second-order cost function by the flexible cost function defined above.
\begin{Proposition}\label{2ndcost}
	Under the setting of Definition \ref{addonecost} and \ref{defflex}, we have
    \begin{equation*}
	D_{x_{1},x_{2}}F=\left(D_{x_{1}}F^{x_{2}}-D_{x_{1}}F^{x_{2}}(A_{x})\right)+\left(D_{x_{1}}F^{x_{2}}(A_{x})-D_{x_{1}}F(A_{x})\right)+\left(D_{x_{1}}F(A_{x})-D_{x_{1}}F\right).
	\end{equation*}
    Particularly, when $A_{x}=\mbb{X}$, we have 
	\begin{equation*}
	D_{x_{1},x_{2}}F=D_{x_{1}}F^{x_{2}}-D_{x_{1}}F.
	\end{equation*}
\end{Proposition}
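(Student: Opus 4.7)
The plan is a direct two-step calculation relying only on the definitions already introduced, since no stabilization, moment, or distributional assumption is needed here. First I would observe that the three-summand right-hand side of the main identity is a telescoping sum: the terms $\pm D_{x_{1}}F^{x_{2}}(A_{x})$ and $\pm D_{x_{1}}F(A_{x})$ each appear with opposite signs and cancel in pairs, so regardless of the choice of the window $A_{x}\in \mcal{F}$ the expression collapses to $D_{x_{1}}F^{x_{2}}-D_{x_{1}}F$. This reduces the general assertion to the particular case $A_{x}=\mbb{X}$ stated at the end of the proposition.

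Second, I would verify the reduced identity $D_{x_{1},x_{2}}F = D_{x_{1}}F^{x_{2}} - D_{x_{1}}F$ by expanding both sides via Definition~\ref{addonecost} and the auxiliary notation $D_{x}F^{y}$ introduced immediately afterwards. Writing $D_{x_{1}}F^{x_{2}}(\eta)$ as $F(\eta\cup\{x_{2},x_{1}\})-F(\eta\cup\{x_{2}\})$ and $D_{x_{1}}F(\eta)$ as $F(\eta\cup\{x_{1}\})-F(\eta)$, their difference rearranges to
$$F(\eta\cup\{x_{1},x_{2}\})-F(\eta\cup\{x_{1}\})-F(\eta\cup\{x_{2}\})+F(\eta),$$
which is precisely $D_{x_{1},x_{2}}F(\eta)$. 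Combined with the telescoping observation above, this yields the full chain of equalities claimed.

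There is essentially no genuine obstacle here, as the whole statement is an algebraic identity: the only care needed is notational, namely distinguishing the three variants $D_{x}F$, $D_{x}F^{y}$, and the flexible $D_{x}F(A_{x})$, and carefully tracking which points have already been inserted into the configuration before the first-order operator acts. The value of the proposition is conceptual rather than technical, since it isolates the exact route by which the harder-to-control iterated add-one operator $D_{x_{1},x_{2}}F$ can later be estimated through three first-order flexible operators acting on the window $A_{x}$, a device that will be exploited in the proofs of Theorems~\ref{MTPoi} and~\ref{MTbino}.
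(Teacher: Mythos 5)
Your proposal is correct and matches the paper's (omitted, "immediate from the definition") argument: the right-hand side telescopes to $D_{x_{1}}F^{x_{2}}-D_{x_{1}}F$ for any choice of $A_{x}$, and expanding $D_{x_{1}}F^{x_{2}}(\eta)=F(\eta\cup\{x_{1},x_{2}\})-F(\eta\cup\{x_{2}\})$ and $D_{x_{1}}F(\eta)=F(\eta\cup\{x_{1}\})-F(\eta)$ shows their difference is exactly $D_{x_{1},x_{2}}F(\eta)$. Nothing further is needed.
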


\subsection{Assumptions}
We now discuss the assumptions made in our work to obtain the normal approximation results. On the measure $\mathbb{Q}$, following~\cite{penrose2007gaussian,penrose2005normal,yukich2015surface,lachieze2019normal}, we make the following assumption: There exist constants $\kappa>0,\omega>1$ such that for $r\ge 0$ and all $x\in\mbb{X}$,
	\begin{align}\label{assQ}
	\underset{\epsilon\rightarrow 0^{+}}{\limsup}~\frac{\mbb{Q}(B_{x}(r+\epsilon)) - \mathbb{Q}(
	B_{x}(r))}{\epsilon}\le \kappa\omega r^{\omega-1}, \quad \mbb{Q}(\{x\})=0.
	\end{align}
For example, one can consider a measure $\mbb{Q}$ on $\mbb{X}$, a full dimensional subset of $\mbb{R}^{d}$, with a bounded density with respect to the Lebesgue measure, where one can choose $\kappa = \sup f$ and $\omega = d$.
We also make the following tail-bound assumption on the radius of strong stabilization.

\begin{Assumption}[Decay of Radius of Stabilization]\label{decay}
	Under the setting of  strong stabilization and \eqref{assQ}, we say the radius of stabilization $R_{x}$ decays exponentially if and only if 
	there exist constants $c_{1},c_{2},c_{3}>0$ such that for $r\ge 0$,
	\begin{equation*}
	\mathbb{P}(R_{x}\ge r)\le c_{1} e^{-c_{2}(s^{1/\omega} r)^{c_{3}}}.
	\end{equation*}
If $R_x$ is based on a binomial process $\xi_n$, then a similar decay holds with $s$ replaced by $n$.
\end{Assumption}

Yet another reason for why we refer to Definition~\ref{defflex} as ``flexible'' is that even when the tail probability of the radius of stabilization $R_{x}$ is unknown for a specific functional, it might be possible to pick $A_{x}$ ``strategically'' and use our approach to obtain normal approximation bounds. We illustrate this point in Section~\ref{sec:mst} by using our approach to recover existing results on normal approximation for the total edge length of the minimal spanning tree.

We next move on to the assumptions on the (flexible) add-one cost operator. Throughout the paper, we assume that $\mathbb{E}\int(D_{x}F)^{2}\lambda(dx)<\infty$. Furthermore, we assume the following $\mathbb{K}$-exponential bound assumption.

\begin{Assumption}[$\mathbb{K}$-exponential bound]\label{kexp}
	We say the add-one cost function $D_{x}F$ satisfies a $\mathbb{K}$-exponential bound, where $\mbb{K}$ is a measurable subset of $\mbb{X}$, if and only if for $x,x^{*}\in\mbb{X}$, there exist constants $k_{1},k_{2},k_{3}>0$ such that
    \begin{equation*}
	\mathbb{P}(D_{x}F\neq 0)\le k_{1}e^{-k_{2}d_{s}(x,\mbb{K})^{k_{3}}},\qquad \text{and}\qquad \mathbb{P}(D_{x}F^{x^{*}}\neq 0)\le k_{1}e^{-k_{2}d_{s}(x,\mbb{K})^{k_{3}}},
	\end{equation*} 
    where $d_{s}(\cdot,\cdot):=s^{1/\omega}d(\cdot,\cdot)$ and $d(x,\mathbb{K})\coloneqq \inf_{y\in \mathbb{K}}~d(x,y)$. 
 Similarly, we can assume the above for binomial point processes by changing $s$ as $n$.
\end{Assumption}

A similar assumption has been made in~\cite[Equations (2.8) and (2.9)]{lachieze2019normal} on the score functions to capture functionals whose variances exhibit surface area order scaling. Here, we make the assumption directly on the functional $F$, which captures a more general class of functionals than that considered in~\cite{lachieze2019normal}.

\begin{Assumption}[Moment Condition]\label{fmc}
We say the functional $F_{s}$ satisfies the moment condition if and only if there exists some $p>0$ and $H<\infty$ such that
\begin{equation}\label{H2}
\underset{s\ge 1}{\sup}~\underset{x,x^{*}\in\mbb{X}}{\sup}(\mathbb{E}~|D_{x}F_{s}|^{p}+\mathbb{E}|D_{x}F_{s}^{x^{*}}|^{p})=H.
\end{equation}
If the Poisson process ${\mathcal P}_s$ is replaced by a binomial process $\xi_n,$ then the above suprema are taken over $n$ rather than $s$ as well as the functional $F_{s}$ is changed to $F_{n}$.
\end{Assumption}
Bounded moment conditions are commonly made to derive normal approximation results. For related work in the context of stabilizing functionals of point process, see \cite[Equations (2.6) and (2.7)] {lachieze2019normal} and \cite[Equations (1.5) and (1.8)]{lachieze2020quantitative}. While \cite{lachieze2019normal} considers moment conditions on score functions, we directly deal with the functional $F$ so that it fits a more general class. 

\section{Main Results}\label{sec:main}
We now present our two main results on the normal approximation of a certain class of functionals of Poisson and binomial point process, Theorem \ref{general} and \ref{MTbino} respectively, that are not necessarily expressible as sums of score functions. We discuss several applications in Section~\ref{sec:app}. Firstly, we introduce a general result for functionals of Poisson point process. We remark that the following theorem does not leverage Assumptions \ref{decay}, \ref{kexp} and \ref{fmc}. However, in Corollary~\ref{MTPoi} we present a refined results under the above mentioned set of assumptions.

\begin{Theorem}[Normal Approximation for Functionals of Poisson Point Processes]\label{general}
Let $F$ be a functional of the Poisson point process $\mcal{P}(\lambda)$ with $F\in L_{\mcal{P}(\lambda)}^{2}$ and $\mathbb{E}\int(D_{x}F)^{2}\lambda(dx)<\infty$. For any $x,x_{1},x_{2}\in\mbb{X}$, define
\begin{align}\label{firstorder}
\mbb{E}|D_{x}F-D_{x}F(A_{x})|^{4} := b_{1}(x,A_{x}),\quad \mbb{E}|D_{x}F(A_{x})|^{4}:= b_{2}(x,A_{x}),
\end{align}
and
\begin{align}\label{secondorder1}
\mbb{E}|D_{x_{1}}F^{x_{2}}-D_{x_{1}}F^{x_{2}}(A_{x_{1}})|^{4}&:= b_{3}(x_{1},x_{2},A_{x_{1}}),\\
\mbb{E}|D_{x_{1}}F(A_{x_{1}})-D_{x_{1}}F|^{4}&:= b_{4}(x_{1},x_{2},A_{x_{1}})\label{secondorder2},\\ \mbb{E}|D_{x_{1}}F^{x_{2}}(A_{x_{1}})-D_{x_{1}}F(A_{x_{1}})|^{4}&:= b_{5}(x_{1},x_{2},A_{x_{1}})\label{secondorder3}.
\end{align}
Then, there is an absolute constant $C^*>0$ such that 
\begin{align*}
	d_{K}\Big(\frac{F-\mathbb{E}F}{\sqrt{\Var F}},N\Big)\le C^{*}\sum_{i=1}^6\gamma_{i}',
\end{align*}
where 
\begin{align*}
    \gamma_{1}&':=\frac{1}{\Var F} \Big(\int \Big(\sum_{j=1}^{2}b_{j}(x_{1},A_{x_{1}})^{\frac{1}{4}}\sum_{j=1}^{2}b_{j}(x_{2},A_{x_{2}})^{\frac{1}{4}}\\  &\qquad\qquad\qquad\qquad \sum_{j=3}^{5}b_{j}(x_{3},x_{1},A_{x_{3}})^{\frac{1}{4}}\sum_{j=3}^{5}b_{j}(x_{3},x_{2},A_{x_{3}})^{\frac{1}{4}}\Big)\lambda^{3}(d(x_{1},x_{2},x_{3}))\Big)^{\frac{1}{2}},\\
    \gamma_{2}'&:=\frac{1}{\Var F}\Big(\int  \sum_{j=3}^{5}b_{j}(x_{3},x_{1},A_{x_{3}})\sum_{j=3}^{5}b_{j}(x_{3},x_{2},A_{x_{3}})\lambda^{3}(d(x_{1},x_{2},x_{3}))\Big)^{\frac{1}{2}},\\
    \gamma_{3}'&:=\frac{1}{(\Var F)^{\frac{3}{2}}}\int \sum_{j=1}^{2}b_{j}(x,A_{x})^{\frac{3}{4}}\lambda (dx),\\
    \gamma_{4}'&:=\frac{\int \sum_{j=1}^{2}b_{j}(x,A_{x})^{\frac{3}{4}}\lambda (dx)}{(\Var F)^{2}}\Big(\Big(\int \sum_{j=1}^{2}b_{j}(x,A_{x})^{\frac{1}{2}}\lambda (dx)\Big)^{\frac{1}{2}}+\Big(\int \sum_{j=1}^{2}b_{j}(x,A_{x})\lambda (dx)\Big)^{\frac{1}{4}}\\&\qquad\qquad\qquad\qquad\qquad\qquad\qquad+(\Var F)^{\frac{1}{2}}\Big),\\
    \gamma_{5}'&:=\frac{1}{\Var F}\Big(\int \sum_{j=1}^{2}b_{j}(x,A_{x})\lambda (dx)\Big)^{\frac{1}{2}},\\
    \gamma_{6}'&:=\frac{1}{\Var F}\Big(\int \sum_{j=1}^{2}b_{j}(x_{1},A_{x_{1}})^{\frac{1}{2}}\sum_{j=3}^{5}b_{j}(x_{1},x_{2},A_{x_{1}})^{\frac{1}{2}}+\sum_{j=3}^{5}b_{j}(x_{1},x_{2},A_{x_{1}})\lambda^{2}(d(x_{1},x_{2}))\Big)^{\frac{1}{2}}.
\end{align*}
\end{Theorem}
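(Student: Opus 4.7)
The plan is to obtain the bound by combining the second-order Poincaré inequality for Poisson functionals of Last--Peccati--Schulte \cite{last2016normal} with the flexible decomposition furnished by Proposition \ref{2ndcost}. More precisely, the starting point is their Kolmogorov bound, which asserts that for $F \in L^2_{\mcal P(\lambda)}(\mbb X)$ with $\mbb E \int (D_x F)^2 \lambda(dx) < \infty$, the normalized random variable $(F - \mbb E F)/\sqrt{\Var F}$ is close in $d_K$ to $N$ up to six explicit terms, each one an integral (over $\mbb X$, $\mbb X^2$, or $\mbb X^3$) built from the $L^4$ moments of the first-order cost $D_x F$ and of the second-order cost $D_{x_1,x_2} F$. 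My first step is therefore to invoke that bound verbatim, producing an expression of the form $d_K \leq C^*(\gamma_1 + \cdots + \gamma_6)$ where $\gamma_i$ depends on $(\mbb E|D_x F|^4)^{1/4}$ and $(\mbb E|D_{x_1,x_2}F|^4)^{1/4}$.

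The second step is to bound these $L^4$ moments in terms of the quantities $b_1,\dots,b_5$. For the first-order operator, I write the trivial identity $D_x F = (D_x F - D_x F(A_x)) + D_x F(A_x)$ and apply Minkowski's inequality in $L^4$ to get
\begin{equation*}
(\mbb E|D_x F|^4)^{1/4} \;\leq\; b_1(x,A_x)^{1/4} + b_2(x,A_x)^{1/4} \;=\; \sum_{j=1}^{2} b_j(x,A_x)^{1/4}.
\end{equation*}
For the second-order operator, Proposition \ref{2ndcost} expresses $D_{x_1,x_2}F$ as a three-term telescoping sum, so another application of Minkowski yields
\begin{equation*}
(\mbb E|D_{x_1,x_2}F|^4)^{1/4} \;\leq\; \sum_{j=3}^{5} b_j(x_1,x_2,A_{x_1})^{1/4}.
\end{equation*}
These are the only two substitutions I will need. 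Note that the role of the second argument in the $b_j$ functions and the ordering conventions (e.g., which variable plays the role of the ``integrated'' one) have to match the structure of each $\gamma_i$, which is a bookkeeping issue rather than a conceptual one.

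The third step is to substitute these bounds into each of the six Last--Peccati--Schulte terms and verify that the resulting expressions coincide with $\gamma_1',\dots,\gamma_6'$ as stated. For the terms in which the fourth moment appears to a fractional power (as in $\gamma_3'$ and $\gamma_5'$, where the exponent is $3/4$ and $1/2$), the elementary inequality $(a_1 + \dots + a_k)^p \leq C_{k,p}(a_1^p + \dots + a_k^p)$ lets me replace a sum raised to a power by a sum of powers, at the cost of absorbing the combinatorial constant into the universal $C^*$. After these manipulations, each $\gamma_i$ is bounded by $\gamma_i'$ up to a universal constant, and summing gives the stated inequality. In particular the product structure of $\gamma_1'$ (four factors under a triple integral) mirrors exactly the LPS $\gamma_1$, where four $L^4$-norms appear, two first-order and two second-order.

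The main obstacle is organizational rather than analytic: the original $\gamma_i$ of \cite{last2016normal} are written with implicit symmetrization and specific variable labels, so I must carefully track which Minkowski substitution applies to which factor, and which point variable enters the flexible window $A_x$, $A_{x_1}$, $A_{x_3}$, etc. Once the pairing is fixed correctly, each transformation is either a Minkowski inequality or a triangle-type $(a+b)^p$ inequality, and no genuinely new probabilistic input is required beyond the LPS bound and the flexible-cost identity of Proposition \ref{2ndcost}.
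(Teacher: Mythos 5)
Your approach is the same as the paper's: invoke the second-order Poincar\'e bound of \cite{last2016normal} (restated as Theorem \ref{6gamma}), decompose $D_xF$ and $D_{x_1,x_2}F$ via the flexible cost operator and Proposition \ref{2ndcost}, and push Minkowski/H\"older bounds through each $\gamma_i$. One bookkeeping point you gloss over: in $\gamma_1$, $\gamma_2$ and $\gamma_6$ the integrands are mixed moments such as $(\mathbb{E}(D_{x_1}F)^2(D_{x_2}F)^2)^{1/2}$, not yet products of $L^4$-norms, so an extra Cauchy--Schwarz step is needed to decouple them before your Minkowski substitutions apply; this is routine and the paper does it explicitly.

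There is, however, one genuine omission. You assert that every term of the Last--Peccati--Schulte bound is an integral built from the $L^4$ moments of the first- and second-order cost operators, but $\gamma_4$ is not: it carries the factor $(\mathbb{E}(F-\mathbb{E}F)^4)^{1/4}$, a fourth central moment of the functional itself, which neither of your two Minkowski substitutions can reach. The paper disposes of it with \cite[Lemma 4.3]{last2016normal}, which gives $\mathbb{E}(F-\mathbb{E}F)^4\le \max\{256(\int(\mathbb{E}(D_xF)^4)^{1/2}\lambda(dx))^2,\,4\int\mathbb{E}(D_xF)^4\lambda(dx)+2(\Var F)^2\}$; taking the fourth root of this maximum is exactly what produces the three-summand parenthetical factor $\big(\big(\int\sum_j b_j^{1/2}\lambda(dx)\big)^{1/2}+\big(\int\sum_j b_j\lambda(dx)\big)^{1/4}+(\Var F)^{1/2}\big)$ in $\gamma_4'$. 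Without this lemma your Step 3 cannot reproduce $\gamma_4'$ --- in particular the $(\Var F)^{1/2}$ summand has no source in your argument. Adding that one cited moment inequality closes the gap; the rest of your plan matches the paper's proof.
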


\begin{Remark}\label{rem3.1} We make some remarks about the general Theorem \ref{general} as follows.
\begin{itemize}
    \item[(i)] The above Theorem \ref{general} generalizes \cite[Theorem 1.2]{last2016normal} by introducing the flexible cost function $D_{x}F_{s}(A_{x})$.
    \item[(ii)] Theorem \ref{general} is valid for deriving normal approximation rates for general functionals or stabilizing functionals not having a known tail probability bound; see Section \ref{sec:mst}. When such tail bounds are known, a more refined result is available (see Corollary~\ref{MTPoi} below).
\end{itemize}
\end{Remark}

When we set $A_{x}=\mbb{X}$ and $\lambda=s\mbb{Q}$, Assumptions \ref{decay}, \ref{kexp} and \ref{fmc} could be leveraged to give upper bounds for
the following crucial probabilities that appear \emph{implicitly} in the proof of Theorem~\ref{general}:
\begin{align}
\label{IJ1}
I_{s}(x)&\coloneqq \mathbb{P}(D_{x}F_{s}\neq 0),\\
J_{s}(x_{1},x_{2})&\coloneqq\mathbb{P}(|D_{x_{1}}F_{s}-D_{x}F_{s}^{x_{2}}|\neq 0),\label{IJ3}
\end{align}
resulting in the following corollary. 

\begin{Corollary}\label{MTPoi}
	Suppose $F_{s}\in L_{\mcal{P}_{s}}^{2}$ and that $F_{s}$ is strongly stabilizing with the radius of stabilization $R_{x}$ decaying exponentially (Assumption~\ref{decay}). Further suppose its cost function satisfies the $\mbb{K}$-exponential bound (Assumption~\ref{kexp}) and the bounded moment condition for $p>4$ (Assumption~\ref{fmc}). Then, there exists a constant $C_{0}>0$ depending only on the constants in \eqref{assQ} and \eqref{H2} such that for $s\ge 1$,
	\begin{equation*}
	d_{K}\left(\frac{F_{s}-\mbb{E}F_{s}}{\sqrt{\Var F_{s}}},N\right)\le C_{0}\left(\frac{\Theta^{\frac{1}{2}}_{\mbb{K},s}}{\Var F_{s}}+\frac{\Theta_{\mbb{K},s}}{(\Var F_{s})^{\frac{3}{2}}}+\frac{\Theta_{\mbb{K},s}^{\frac{5}{4}}+\Theta_{\mbb{K},s}^{\frac{3}{2}}}{(\Var F_{s})^{2}}\right),
	\end{equation*}
	where 
	\begin{align}\label{eq:Thetadef}
	    \Theta_{\mbb{K},s}\coloneqq s\int_{\mbb{X}}e^{-C_{2}\tfrac{(p-4)}{4p}\left(\tfrac{d_{s}(x,\mbb{K})}{2}\right)^{C_{3}}}\mbb{Q}(dx).
	\end{align}
\end{Corollary}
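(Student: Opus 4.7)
The plan is to specialize Theorem~\ref{general} to the choice $A_x = \mbb{X}$. With this choice, the three differences involving $A_x$ in \eqref{firstorder}--\eqref{secondorder3} vanish identically, so $b_1, b_3, b_4 \equiv 0$, while $b_2(x) = \mbb{E}|D_x F_s|^4$ and, by Proposition~\ref{2ndcost}, $b_5(x_1, x_2) = \mbb{E}|D_{x_1,x_2} F_s|^4$. It then suffices to bound each of the six $\gamma_i'$ appearing in Theorem~\ref{general} by an expression of the form stated in the corollary.

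The central analytic device is H\"older's inequality at moment order $p > 4$: for any random variable $Y$,
\begin{equation*}
(\mbb{E}|Y|^4)^{1/4} \le (\mbb{E}|Y|^p)^{1/p} \cdot \mbb{P}(Y \neq 0)^{(p-4)/(4p)}.
\end{equation*}
Applied with $Y = D_x F_s$ and the moment bound from Assumption~\ref{fmc}, this yields $b_2(x)^{1/4} \lesssim I_s(x)^{(p-4)/(4p)}$; applied with $Y = D_{x_1,x_2} F_s$, it gives $b_5(x_1, x_2)^{1/4} \lesssim J_s(x_1, x_2)^{(p-4)/(4p)}$, where $I_s$ and $J_s$ are the probabilities \eqref{IJ1} and \eqref{IJ3}. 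Assumption~\ref{kexp} immediately bounds $I_s(x) \le k_1 e^{-k_2 d_s(x, \mbb{K})^{k_3}}$. For $J_s$ I would combine two complementary estimates: a union bound via Assumption~\ref{kexp} gives $J_s(x_1, x_2) \le 2 k_1 e^{-k_2 d_s(x_1, \mbb{K})^{k_3}}$, whereas strong stabilization (Definition~\ref{strongs}) forces $\{D_{x_1} F_s \neq D_{x_1} F_s^{x_2}\} \subseteq \{R_{x_1} \ge d(x_1, x_2)\}$, which together with Assumption~\ref{decay} yields $J_s(x_1, x_2) \le c_1 e^{-c_2 (s^{1/\omega} d(x_1, x_2))^{c_3}}$. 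Taking a geometric mean of the two bounds produces simultaneous decay in $d_s(x_1, \mbb{K})$ and $s^{1/\omega} d(x_1, x_2)$.

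Armed with these estimates, each integral appearing in $\gamma_1', \ldots, \gamma_6'$ reduces to a single integral of the form appearing in \eqref{eq:Thetadef}. Concretely, the stabilization factor $e^{-c (s^{1/\omega} d(x_3, x_i))^{c_3}}$ extracted from each $b_5(x_3, x_i)^{1/4}$ allows one to integrate out the paired variable $x_i$ against $\lambda(dx_i) = s\, \mbb{Q}(dx_i)$ using the polynomial growth condition \eqref{assQ} on $\mbb{Q}$-balls, producing a constant independent of $s$ times a residual $\mbb{K}$-decay factor in $x_3$. The divisor $2^{C_3}$ inside the exponent of \eqref{eq:Thetadef} reflects the geometric-mean split (equivalently, the triangle inequality $d_s(x_i, \mbb{K}) \ge d_s(x_3, \mbb{K}) - d_s(x_3, x_i)$), while the factor $(p-4)/(4p)$ carries over directly from the H\"older exponent on the probability. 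Careful bookkeeping of the surviving powers of $\Theta_{\mbb{K}, s}$ in each $\gamma_i'$---roughly $\Theta^{1/2}$ for $\gamma_1', \gamma_2', \gamma_5', \gamma_6'$, and $\Theta^{\alpha}$ for $\alpha \in \{1, 5/4, 3/2\}$ arising from the product terms in $\gamma_3', \gamma_4'$---then assembles into the three groupings $\Theta^{1/2}/\Var F_s$, $\Theta/(\Var F_s)^{3/2}$ and $(\Theta^{5/4} + \Theta^{3/2})/(\Var F_s)^2$ of the stated bound.

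The main obstacle I anticipate lies in the triple-integral terms $\gamma_1'$ and $\gamma_2'$. There the common variable $x_3$ appears in two coupled factors $b_5(x_3, x_1), b_5(x_3, x_2)$, and one must simultaneously extract enough stabilization decay in both $s^{1/\omega} d(x_3, x_1)$ and $s^{1/\omega} d(x_3, x_2)$ to integrate out $x_1$ and $x_2$ while retaining sufficient $\mbb{K}$-decay to bound the remaining $x_3$-integral by $\Theta_{\mbb{K}, s}$. Choosing the right allocation of each $b_5^{1/4}$ factor between its $\mbb{K}$-bound and its stabilization bound is the delicate step that ultimately fixes the constants $C_2$ and $C_3$ in the definition of $\Theta_{\mbb{K},s}$.
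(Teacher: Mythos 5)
Your proposal is correct and follows essentially the same route as the paper: specialize Theorem~\ref{general} with $A_{x}=\mbb{X}$ (so $b_{1},b_{3},b_{4}\equiv 0$), apply H\"older's inequality at order $p>4$ together with Assumption~\ref{fmc} to reduce $b_{2}$ and $b_{5}$ to the probabilities $I_{s}$ and $J_{s}$, bound $J_{s}$ simultaneously by the $\mbb{K}$-exponential bound and by the stabilization event $\{R_{x_{1}}\ge d(x_{1},x_{2})\}$ (the paper records this as a single bound with $\max\{d_{s}(x_{1},x_{2}),d_{s}(x_{1},\mbb{K}),d_{s}(x_{2},\mbb{K})\}$, equivalent up to constants to your geometric mean), and integrate out the paired variables using \eqref{assQ}, with the halving of $d_{s}(x,\mbb{K})$ in $\Theta_{\mbb{K},s}$ arising exactly from the triangle-inequality split you describe. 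Your bookkeeping of the powers of $\Theta_{\mbb{K},s}$ across the $\gamma_{i}'$ also matches the paper's Lemmas~\ref{lemma61}--\ref{lemma67}.
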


\begin{Corollary}
Under the conditions of Corollary \ref{MTPoi}, assume there exists a constant $C>0$ such that
\begin{equation}\label{varss}
\underset{s\ge 1}{\sup}~\frac{\Theta_{\mbb{K},s}}{\Var F_{s}}\le C,
\end{equation} 
then there exists a constant $C_{0}'>0$ depending on $C$ and \eqref{assQ}-\eqref{H2} such that for $s\ge 1$,
\begin{equation}
d_{K}\left(\frac{F_{s}-\mbb{E}F_{s}}{\sqrt{\Var F_{s}}},N\right)\le C_{0}'\frac{1}{\sqrt{\Var F_{s}}}.\label{eq:optp}
\end{equation}
\label{coros}
\end{Corollary}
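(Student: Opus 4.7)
The plan is to directly apply Corollary~\ref{MTPoi}, which under the stated hypotheses yields
\begin{equation*}
d_{K}\left(\frac{F_{s}-\mbb{E}F_{s}}{\sqrt{\Var F_{s}}},N\right)\le C_{0}\left(\frac{\Theta^{1/2}_{\mbb{K},s}}{\Var F_{s}}+\frac{\Theta_{\mbb{K},s}}{(\Var F_{s})^{3/2}}+\frac{\Theta_{\mbb{K},s}^{5/4}+\Theta_{\mbb{K},s}^{3/2}}{(\Var F_{s})^{2}}\right),
\end{equation*}
and then to substitute the hypothesis $\Theta_{\mbb{K},s}\le C\,\Var F_{s}$ (valid for every $s\ge 1$) into each of the four summands on the right-hand side.

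After substitution, the first summand is bounded by $\sqrt{C}/\sqrt{\Var F_{s}}$, the second by $C/\sqrt{\Var F_{s}}$, and the $\Theta^{3/2}/(\Var F_{s})^{2}$ contribution by $C^{3/2}/\sqrt{\Var F_{s}}$. The only summand whose resulting exponent on $\Var F_{s}$ does not immediately match $-1/2$ is $\Theta^{5/4}/(\Var F_{s})^{2}$, which becomes $C^{5/4}/(\Var F_{s})^{3/4}$.

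To resolve this last piece, I would split on the size of $\Var F_{s}$: if $\Var F_{s}\ge 1$ then $(\Var F_{s})^{-3/4}\le (\Var F_{s})^{-1/2}$, so the residual term is absorbed into a constant multiple of $1/\sqrt{\Var F_{s}}$; if instead $\Var F_{s}<1$, then the Kolmogorov distance is trivially bounded by $1\le 1/\sqrt{\Var F_{s}}$, so \eqref{eq:optp} holds for free. Setting $C_{0}'=C_{0}(\sqrt{C}+C+C^{5/4}+C^{3/2})$, possibly adjusted by a universal factor to cover the trivial branch, delivers the claim. The argument is essentially arithmetic bookkeeping on the bound supplied by Corollary~\ref{MTPoi}, and the only mildly subtle point is the exponent-$3/4$ mismatch generated by the $\Theta^{5/4}$ term, which is removed by the variance dichotomy above together with the trivial bound $d_{K}\le 1$.
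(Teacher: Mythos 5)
Your proof is correct and is exactly the (implicit) argument the paper intends: Corollary~\ref{coros} is stated without a separate proof precisely because it follows by substituting $\Theta_{\mbb{K},s}\le C\,\Var F_{s}$ into the bound of Corollary~\ref{MTPoi}. Your dichotomy on $\Var F_{s}$ (using $d_{K}\le 1$ when $\Var F_{s}<1$) cleanly handles the $\Theta_{\mbb{K},s}^{5/4}/(\Var F_{s})^{2}$ term, whose exponent mismatch the paper glosses over, so the write-up is complete as given.
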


Next, we introduce the main theorem for binomial point process. The binomial version of Theorem \ref{general} is not immediately known. Indeed, for the Poisson case we leverage Theorem \ref{6gamma} for our proofs. However, due to the fact that there is no nice counterpart of the second-order Poincar\'{e} inequality (see~\cite{last2016normal}) an analogue of Theorem~\ref{6gamma} is not known for the binomial case. On the other hand, we point out that it is possible to obtain a similar result based on \cite[Theorem 5.1]{lachieze2017new}, which serves as a counterpart of Theorem \ref{6gamma} for binomial setting. Based on this approach, we now present our result for the Binomial setting.

\begin{Theorem}[Normal Approximation for Functionals of Binomial Point Process]\label{MTbino}
	Suppose $F_{n}\in L_{\xi_{n}}^{2}$ and invoke the binomial version of  assumptions in Corollary~\ref{MTPoi}. Then, there exists a constant $C_{0}>0$ depending only on the constants in \eqref{assQ}-\eqref{H2} such that for $n\ge 2$,
	\begin{equation}\label{eq:binomialmain}
	d_{K}\left(\frac{F_{n}-\mbb{E}F_{n}}{\sqrt{\Var F_{n}}},N\right)\le C_{0}\left(\frac{\Theta^{\frac{1}{2}}_{\mbb{K},n}}{\Var F_{n}}+\frac{\Theta_{\mbb{K},n}}{(\Var F_{n})^{\frac{3}{2}}}+\frac{\Theta_{\mbb{K},n}+\Theta_{\mbb{K},n}^{\frac{3}{2}}}{(\Var F_{n})^{2}}\right),
	\end{equation}
	where 
	\begin{align*}
	\Theta_{\mbb{K},n}:=n\int_{\mbb{X}}e^{-C_{2}\tfrac{(p-4)}{4p}\left(\tfrac{d_{n}(x,\mbb{K})}{2}\right)^{C_{3}}}\mbb{Q}(dx).
	\end{align*}
\end{Theorem}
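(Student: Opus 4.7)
The plan is to mirror the Poisson strategy (Theorem~\ref{general} followed by Corollary~\ref{MTPoi}) but replace the Malliavin--Stein/second-order Poincar\'e input, which is unavailable in the binomial setting, with the binomial Kolmogorov bound of~\cite[Theorem~5.1]{lachieze2017new}. That result gives a sum of six integrals of fourth-moment functionals of first- and second-order binomial difference operators $D_x F_n$ and $D_{x_1,x_2}F_n$ (with respect to $\xi_n$ and, in some terms, the reduced process $\xi_{n-1}$), structurally parallel to the quantities $\gamma_1',\dots,\gamma_6'$ in Theorem~\ref{general}. The powers that appear are slightly different from the Poisson case, which accounts for the cleaner exponent $\Theta_{\mbb{K},n}+\Theta_{\mbb{K},n}^{3/2}$ in~\eqref{eq:binomialmain} in place of the Poisson $\Theta^{5/4}+\Theta^{3/2}$.

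The first reduction is to set $A_x=\mbb{X}$ in Definition~\ref{defflex}, so Proposition~\ref{2ndcost} collapses to $D_{x_1,x_2}F_n=D_{x_1}F_n^{x_2}-D_{x_1}F_n$; applied term by term, this rewrites every fourth moment in the six-term bound as either $\mbb{E}|D_x F_n|^4$ or $\mbb{E}|D_{x_1}F_n^{x_2}-D_{x_1}F_n|^4$. Each of these is then controlled by H\"older's inequality: for $p>4$ and $H$ as in Assumption~\ref{fmc},
\begin{equation*}
\mbb{E}|D_x F_n|^4\le H^{4/p}\,I_n(x)^{(p-4)/p},\qquad \mbb{E}|D_{x_1}F_n^{x_2}-D_{x_1}F_n|^4\le (2^4 H)^{4/p}\,J_n(x_1,x_2)^{(p-4)/p},
\end{equation*}
where $I_n(x):=\mbb{P}(D_x F_n\neq 0)$ and $J_n(x_1,x_2):=\mbb{P}(D_{x_1}F_n^{x_2}\neq D_{x_1}F_n)$. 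The $\mbb{K}$-exponential bound of Assumption~\ref{kexp} directly yields $I_n(x)\le k_1 e^{-k_2 d_n(x,\mbb{K})^{k_3}}$, while strong stabilization and Assumption~\ref{decay} give $J_n(x_1,x_2)\le \mbb{P}(R_{x_1}\ge d(x_1,x_2)/2)\le c_1 e^{-c_2(n^{1/\omega} d(x_1,x_2)/2)^{c_3}}$, since if $x_2\notin B_{x_1}(R_{x_1})$ the two add-one costs coincide. Feeding these exponentials into the six integrals, splitting each according to whether $d_n(x,\mbb{K})$ or $d_n(x_1,x_2)$ dominates, and using the growth condition~\eqref{assQ} to integrate the volumes of small balls reduces every integral to a power of
\begin{equation*}
\Theta_{\mbb{K},n}=n\int_{\mbb{X}}e^{-C_{2}\frac{(p-4)}{4p}\bigl(\tfrac{d_{n}(x,\mbb{K})}{2}\bigr)^{C_{3}}}\mbb{Q}(dx),
\end{equation*}
with the exponents matching those in~\eqref{eq:binomialmain}.

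The main obstacle is not the algebra of the $b_i$ bounds, which is essentially identical to the Poisson case, but the careful verification that the binomial six-term master bound from~\cite{lachieze2017new} admits all the ingredients we need in the same form. In particular, several of its integrands involve second-order differences evaluated on $\xi_{n-1}$, so one must check that Assumptions~\ref{decay}--\ref{fmc} (stated for $\xi_n$) transfer uniformly to the deleted process, and that Proposition~\ref{2ndcost} can still be applied inside each integrand to introduce the flexible add-one cost machinery. Once this verification is complete, assembling the estimates exactly as in the proof of Corollary~\ref{MTPoi}---tracking the slightly different exponents dictated by the binomial master bound---produces the claimed inequality~\eqref{eq:binomialmain}.
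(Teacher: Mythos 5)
Your proposal is correct and follows essentially the same route as the paper: the only difference is that you take the six-term master bound of \cite[Theorem 5.1]{lachieze2017new} as your starting point, whereas the paper starts from \cite[Theorem 4.2]{lachieze2019normal} (restated as Theorem~\ref{Thm 7.1}), which is itself derived from that master bound by precisely the H\"older/moment-condition reduction you carry out by hand. Everything downstream --- setting $A_x=\mbb{X}$ so that $D_{x_1,x_2}F_n=D_{x_1}F_n^{x_2}-D_{x_1}F_n$, bounding the resulting probabilities via Assumptions~\ref{decay} and~\ref{kexp} (noting that for $J_n$ you also need the $\mbb{K}$-exponential part, not just the stabilization radius, to carry out the max-splitting you describe), and integrating with~\eqref{assQ} to produce $\Theta_{\mbb{K},n}$ --- is exactly the paper's procedure of transferring Lemmas~\ref{lemma61}--\ref{lemma67} from $s$ to $n$.
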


\begin{Remark} We now make the following remark on Theorem \ref{MTbino}. Compared to the Poisson case, the exponent of $\Theta_{\mbb{K},n}$ in the third component of the sum on the right hand side of~\eqref{eq:binomialmain} is different. In essence, this difference can be traced back to a fundamental fact that there is no nice counterpart of the second-order Poincar\'{e} inequality (see~\cite{last2016normal}) for binomial case. Instead, we use the approach taken in~\cite[Theorem 4.2]{lachieze2019normal} to prove Theorem \ref{MTbino}. 
\end{Remark}

\begin{Corollary}\label{corobino}
	Under the conditions of Theorem \ref{MTbino}, assume there exists a constant $C>0$ such that
	\begin{equation}\label{varnn}
	\underset{n\ge 1}{\sup}~\frac{\Theta_{\mbb{K},n}}{\Var F_{n}}\le C,
	\end{equation} 
	then there exists a constant $C_{0}'>0$ depending on $C$ and \eqref{assQ}-\eqref{H2} such that for $n\ge 2$,
	\begin{equation}
	d_{K}\left(\frac{F_{n}-\mbb{E}F_{n}}{\sqrt{\Var F_{n}}},N\right)\le C_{0}'\frac{1}{\sqrt{\Var F_{n}}}.\label{eq:optb}
	\end{equation}
\end{Corollary}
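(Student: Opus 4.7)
\smallskip

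\noindent\textbf{Proof proposal for Corollary \ref{corobino}.} The plan is to simply substitute the hypothesis \eqref{varnn} into the bound \eqref{eq:binomialmain} supplied by Theorem \ref{MTbino} and then match each term against the target rate $1/\sqrt{\Var F_n}$. Writing $V:=\Var F_n$ and $\Theta:=\Theta_{\mbb{K},n}$ for brevity, the bound reads
\begin{equation*}
d_K\Big(\tfrac{F_n-\mbb EF_n}{\sqrt V},N\Big)\le C_0\Big(\tfrac{\Theta^{1/2}}{V}+\tfrac{\Theta}{V^{3/2}}+\tfrac{\Theta}{V^2}+\tfrac{\Theta^{3/2}}{V^2}\Big),
\end{equation*}
and the hypothesis gives $\Theta\le CV$. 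First I would plug this into each of the four summands. The first term yields $\Theta^{1/2}/V\le \sqrt{C}/\sqrt V$; the second yields $\Theta/V^{3/2}\le C/\sqrt V$; and the fourth yields $\Theta^{3/2}/V^{2}\le C^{3/2}/\sqrt V$. Each of these three is already of the desired order $1/\sqrt V$.

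The only term that does not immediately conform is the third, $\Theta/V^{2}\le C/V$. To handle this I would split into two cases. If $V\ge 1$ then $1/V\le 1/\sqrt V$, so this term is again bounded by $C/\sqrt V$. If instead $V<1$, then the trivial bound $d_K\le 1$ together with $1/\sqrt V>1$ already gives $d_K\le 1/\sqrt V$, so the conclusion of the corollary holds with $C_0'=1$ in this range. Combining the two cases and taking
\[
C_0'\;:=\;\max\!\big(1,\ C_0(\sqrt C+C+C+C^{3/2})\big)
\]
yields the advertised bound $d_K(\,(F_n-\mbb EF_n)/\sqrt V,\,N)\le C_0'/\sqrt V$ uniformly for $n\ge 2$, where the dependence of $C_0'$ on the constants in \eqref{assQ}--\eqref{H2} is inherited from $C_0$ via Theorem \ref{MTbino}.

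There is no real obstacle here: the corollary is essentially algebraic bookkeeping on top of Theorem \ref{MTbino}, with the mildly delicate point being the case split around $V=1$ to absorb the $\Theta/V^{2}$ term, which is handled cleanly by the universal bound $d_K\le 1$. The argument is identical in structure to the proof of Corollary \ref{coros} in the Poisson setting, the only bookkeeping difference being that the exponent on the $\Theta$ inside the third summand is $1$ rather than $5/4$, which in fact makes this step slightly simpler than its Poisson counterpart.
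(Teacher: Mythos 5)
Your proposal is correct and follows the same route the paper takes (the paper states this corollary without an explicit proof, treating it as immediate algebraic substitution of \eqref{varnn} into \eqref{eq:binomialmain}). Your case split around $\Var F_n = 1$, using the trivial bound $d_K \le 1$ to absorb the $\Theta_{\mbb{K},n}/(\Var F_n)^2$ term when the variance is small, is a detail the paper leaves implicit, and it is handled correctly.
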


\begin{Remark}
We make the following remarks about the above results on both Poisson and binomial cases. 
\begin{itemize}
\item[(i)] If $\mbb{K}=\mbb{X}$ and $\mbb{Q}(\mbb{X})<\infty$ or $\mbb{Q}$ has a bounded density with respect to the Lebesgue measure on a compact set, the conditions \eqref{varss} and \eqref{varnn} can be simplified as 
\begin{equation}
\underset{s\ge 1}{\sup}~\frac{s}{\Var F_{s}}\le C,\label{vars}
\end{equation}
\begin{equation}
\underset{n\ge 1}{\sup}~\frac{n}{\Var F_{n}}\le C,\label{varn}
\end{equation}
\item[(ii)] \emph{Optimality:} Following~\cite{lachieze2019normal}, we refer to cases where the bounds in~\eqref{eq:optp} and~\eqref{eq:optb} can be attained, as being presumably optimal. Indeed, Corollary \ref{coros} and \ref{corobino} show  that if the variance of the statistics $F_{s},F_{n}$ are bounded below by $\Theta_{\mbb{K},s},\Theta_{\mbb{K},n}$, respectively, a presumably optimal normal approximation rate is achieved. To give an intuition on why the above situation is referred to as being presumably optimal, note that for the case of sums of i.i.d  random variables, non-trivial i.i.d. random variables can be constructed that achieve the upper bounds of the form in~\eqref{eq:optp} and~\eqref{eq:optb}. Formal lower bounds on the optimality are available for the case of integer-valued statistics in~\cite{englund1981remainder} and~\cite{pekoz2013degree}. Furthermore, in a recent work,~\cite{schulte2021rates} established lower bounds for a large class of statistics. 
\end{itemize}
\end{Remark}

\textbf{Comparison to related works.} We now provide some comparisons to the related work. Firstly, our proof techniques, similar to~\cite{lachieze2019normal}, are based on several central ideas proposed in \cite{last2016normal}. For the case of functionals that are expressible as sums of score functions,~\cite{lachieze2019normal} established presumably optimal bounds under the score-based stabilization assumption, for both the binomial and Poisson cases. While they too use second-order cost operators, our Theorem \ref{general} and Theorem \ref{MTbino} handle a much larger class of functionals in comparison (not necessarily as sums of scores). The work of \cite{lachieze2020quantitative} consider general functionals (not necessarily sums) and work under strong stabilization assumption. However, they only consider the Poisson case. To get explicit bounds (e.g., their Corollary 1.5 and Proposition 1.12), they introduce a specific form of $A_x$ in their proofs and their overall approach results in sub-optimal rates in comparison to our results, Corollary \ref{MTPoi}, and to~\cite{lachieze2019normal} in the case when the functional is expressible as sums of scores.  Our Theorem \ref{general} generalizes \cite[Theorem 1.2]{last2016normal} by introducing the flexible cost function $D_{x}F_{s}(A_{x})$ for general functionals of Poisson point process. The work of~\cite{lachieze2017new} also consider normal approximations of general functions (not necessarily as sums). However, their approach is only valid for the binomial case. Moreover, a further investigation of their main theorem \cite[Theorem 4.2]{lachieze2017new} reveals that instead of introducing stabilization notions, their normal approximation bounds are obtained by computing some quantities (for example,  $T,T'$ in \cite[Section 4]{lachieze2017new}), which are complicated to deal with for some functionals, e.g., Euler characteristic; see \cite[Proof of Theorem 3.2]{krebs2021approximation}.\\

\textbf{Applying our main results.} We conclude this section, with the following three-step procedure illustrating how to apply our main theorems, Theorem \ref{general} and Theorem \ref{MTbino}.
\begin{itemize}[noitemsep]
    \item \textit{Step 1:} Check if the functional $F$ is strongly stabilization (i.e., Definition \ref{strongs}), if the tail probability of the radius of stabilization (Assumptions \ref{decay}) could be computed, and verify Assumption~\ref{kexp} on the cost functions.
\begin{itemize}[noitemsep]
\item    If the functional is not strongly stabilizing or no upper bound of the radius of stabilization $R_{x}$ is known, consider the flexible cost functions $D_{x}F_{s}(A_{x})$ with appropriate choice of $A_{x}$ and apply Theorem \ref{general}. 
\end{itemize}
    \item \textit{Step 2:} Check bounded moment condition, i.e., Assumption \ref{fmc}.
    \item \textit{Step 3:} In order to check for presumable optimality, one can seek to bound the variance, i.e., \eqref{vars} and \eqref{varn}.
\end{itemize}
If the above three steps are satisfied, apply Corollary \ref{MTPoi} and Theorem \ref{MTbino} for the Poisson and binomial settings respectively.

\section{Applications}\label{sec:app}
In this section, we illustrate the applicability of our bounds in Theorem \ref{general}, Corollary \ref{MTPoi} and Theorem \ref{MTbino} on several geometric and topological statistics. 

\subsection{Total Edge Length of $k$-Nearest Neighbor Graphs}\label{sec:knnel}
Recall the definition of $k$-NN Graphs in Example~\ref{eg:knn} and define the total edge length of a $k$-NN graph as:
\begin{equation}\label{FKNN}
F^{k\text{-NN}}_s(\mcal{P}_{s}) \coloneqq  \sum_{x\in \mcal{P}_{s}}f_{s}(x,\mcal{P}_{s}),
\end{equation}
with $f_s$ as defined in~\eqref{eq:fsforknn}. similarly, we define $F_{n}^{k\text{-NN}}$ for an underlying binomial point process.

\begin{Theorem}	\label{totalknn}
Assume there exists a constant $c>0$ such that, for $r\le \text{diam}(\mbb{X}) < \infty$, 
\begin{equation}
\underset{x\in\mbb{X}}{\inf}~\mbb{Q}(B_{x}(r))\ge cr^{\omega}. \label{lowerQ}
\end{equation}
If there exists a constant $C>0$ such that
\begin{equation}\label{convars}
\underset{s\ge 1}{\sup}~\frac{s}{\Var F^{k\text{-NN}}_s(\mcal{P}_{s})}\le C,
\end{equation}
then there exists a constant $C_{0}>0$ such that for $s\ge 1$,
	\begin{equation*}
	d_{K}\left(\frac{F^{k\text{-NN}}_s(\mcal{P}_{s})-\mathbb{E}F^{k\text{-NN}}_s(\mcal{P}_{s})}{\sqrt{\mathrm{Var}F^{k\text{-NN}}_s(\mcal{P}_{s})}},N\right)\le C_{0}\frac{1}{\sqrt{s}}.
	\end{equation*}
	And if there exists a constant $C>0$ such that
\begin{equation}\label{convarn}
\underset{n\ge 1}{\sup}~\frac{n}{\Var F^{k\text{-NN}}_n(\xi_{n})}\le C,
\end{equation}
then for $n\ge 2$,
	\begin{equation*}
	d_{K}\left(\frac{F^{k\text{-NN}}_n(\xi_{n})-\mathbb{E}F^{k\text{-NN}}_n(\xi_{n})}{\sqrt{\mathrm{Var}F^{k\text{-NN}}_n(\xi_{n})}},N\right)\le C_{0}\frac{1}{\sqrt{n}}.
	\end{equation*}
\end{Theorem}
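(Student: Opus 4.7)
The plan is to derive both bounds as direct applications of the refined corollaries to our main theorems, namely Corollary~\ref{coros} in the Poisson case and Corollary~\ref{corobino} in the binomial case, following the three-step recipe outlined at the end of Section~\ref{sec:main}. Both corollaries are invoked with the choice $\mbb{K}=\mbb{X}$. Under this choice $d_{s}(x,\mbb{K})=0$ for every $x\in\mbb{X}$, so Assumption~\ref{kexp} is satisfied trivially (the probabilities are bounded by $1$), and $\Theta_{\mbb{K},s}=s\,\mbb{Q}(\mbb{X})=O(s)$ (similarly $\Theta_{\mbb{K},n}=O(n)$) since $\mbb{Q}$ is a finite measure on a bounded $\mbb{X}$. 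Consequently the variance hypotheses \eqref{varss} and \eqref{varnn} simplify, by Remark~3.3(i), to \eqref{convars} and \eqref{convarn} respectively, so these are automatically provided by the assumptions of the theorem. What remains is to verify strong stabilization with an exponentially decaying radius (Assumption~\ref{decay}) and the bounded moment condition (Assumption~\ref{fmc}).

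For strong stabilization, Example~\ref{eg:knn} already shows that $F^{k\text{-NN}}_{s}$ is strongly stabilizing with radius $R_{x}=4R$, where $R$ is the smallest $t>0$ such that each of the six (in higher dimension, the finitely many analogous sectorial) regions $T_{j}(t)$ around $x$ contains at least $k+1$ Poisson points; the binomial analogue is identical. The key step is to upgrade this to an exponential tail bound. By a union bound, $\mbb{P}(R\ge t)\le 6\max_{j}\mbb{P}(\mcal{P}_{s}(T_{j}(t))\le k)$. Using the lower bound \eqref{lowerQ}, a standard geometric argument shows that each sector $T_{j}(t)$ has $\mbb{Q}$-mass at least $c't^{\omega}$, since each contains an inscribed ball of radius comparable to $t$. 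Then $\mcal{P}_{s}(T_{j}(t))$ is Poisson with mean at least $c'st^{\omega}$, and the standard lower-tail Chernoff bound for Poisson random variables gives $\mbb{P}(R\ge t)\le c_{1}\exp(-c_{2}st^{\omega})$ for $t$ large, matching Assumption~\ref{decay} with $c_{3}=\omega$ after the $s^{1/\omega}$-rescaling. The binomial case uses the analogous binomial lower-tail bound.

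For the bounded moment condition, the classical sphere-packing argument in $\mbb{R}^{\omega}$ shows that the number of points having $x$ as one of their $k$-nearest neighbors is bounded by a constant $c_{\omega}k$ depending only on the dimension and $k$. Hence adding the point $x$ alters only a bounded number of edges, and every edge length is at most $\mathrm{diam}(\mbb{X})^{\vartheta}<\infty$. Therefore $|D_{x}F_{s}^{k\text{-NN}}|$ and $|D_{x}(F_{s}^{k\text{-NN}})^{x^{*}}|$ are bounded by an absolute deterministic constant, which gives Assumption~\ref{fmc} for every $p>0$ (in particular for some $p>4$); the argument for the binomial process is identical. With all three assumptions in place and $\mbb{K}=\mbb{X}$, Corollary~\ref{coros} yields $d_{K}\bigl((F^{k\text{-NN}}_{s}-\mbb{E}F^{k\text{-NN}}_{s})/\sqrt{\Var F^{k\text{-NN}}_{s}},N\bigr)\le C'_{0}/\sqrt{\Var F^{k\text{-NN}}_{s}}$, which under \eqref{convars} is at most $C_{0}/\sqrt{s}$; the binomial bound follows analogously from Corollary~\ref{corobino}. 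The main technical obstacle is the stabilization step: carefully transferring the ball-mass lower bound \eqref{lowerQ} to lower bounds on sector masses, and handling this uniformly in $x$ and in general dimension $\omega$, which needs some care near the boundary of $\mbb{X}$ but is standard.
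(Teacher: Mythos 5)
Your proposal is correct and follows essentially the same route as the paper: verify strong stabilization with $R_{x}=4R$ and an exponential tail via \eqref{lowerQ} plus Poisson/binomial Chernoff bounds, check the moment condition, take $\mbb{K}=\mbb{X}$ so that $\Theta_{\mbb{K},s}=O(s)$, and conclude via Corollaries~\ref{coros} and~\ref{corobino}. The only (harmless) deviations are that you verify Assumption~\ref{fmc} by a direct deterministic packing bound on the add-one cost rather than citing \cite[Lemmas 5.5--5.6]{lachieze2019normal}, and you obtain the tail of $R$ by a union bound over the six sectors rather than the paper's containment of an inscribed ball in their union.
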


\begin{Remark} \label{rem:knn}
We make the following remarks about the above result.
\begin{itemize}
    \item[(i)] Condition~\eqref{lowerQ}, is required in addition to the~\eqref{assQ} for the $k$-NN statistic; see~\cite{lachieze2019normal} for details. Note that the total edge length of a $k$-NN graph \eqref{FKNN} is expressible as a sum of score functions. Hence, the results in~\cite{lachieze2019normal} already provide presumably optimal bounds. Our results above also recover the same bounds. 
    \item[(ii)] Now we compare our results to~\cite{lachieze2020quantitative} in the Poisson setting. Recall that similar to our work, they considered general functionals (not necessarily expressible as sums of scores). However, their generality comes at the cost of not having presumably optimal bounds in the setting of the total edge length of a $k$-NN graph. Specifically, \cite[Proposition 1.12]{lachieze2020quantitative}, term $\sqrt{{b_{n}}/{n}}$ with $b_{n}\rightarrow \infty$ implies that it has a slower rate than ${1}/{\sqrt{n}}$. This highlights the benefit of our approach: despite its generality, we still obtain presumably optimal bounds for this specific special case.
    \item[(iii)] For the binomial setting,~\cite{lachieze2017new} obtained rates in the Kolmogorov metric for the same statistic. However, as discussed in~\cite[Remark (i) below Theorem 3.1]{lachieze2019normal}, their results are sub-optimal and involve additional logarithmic factors, that we avoid. 
    \item[(iv)] When we consider $\mbb{X}$ as a full-dimensional  compact convex subset of $\mbb{R}^{d}$, $\omega=d$, as shown in \cite[Proof of Theorem 6.1]{penrose2001central}\footnote{\cite{penrose2001central} consider the case of $\vartheta=1$. However, a closer examination of the proof shows that it can be easily extended for any $\vartheta>0$.}, the conditions~\eqref{convars} and \eqref{convarn} are satisfied.
    \end{itemize}
  
\end{Remark}

\subsection{Shannon Entropy}\label{Shannon app}
Given an i.i.d. sample $X_{1},X_{2},...,X_{n}$ from a density $q$ on $\mbb{R}^{d}$, the differential (Shannon) entropy is defined as  $H(q):=-\mbb{E}_{X\sim q}\log q(X)=-\int_{\mathbb{R}^{d}}q(x)\log q(x)dx$. The nearest neighbor entropy estimate, also known as the Kozachenko-Leonenko estimator, was first proposed in~\cite{kozachenko1987sample} based on the $1$-NN density estimator. A generalization of this estimator based on k-NN density estimator is given by
\begin{equation*}
\frac{1}{n}\sum_{i=1}^{n}\log\left(\frac{(n-1)V_{d}\rho_{k,i}^{d}}{e^{\Psi(k)}}\right),
\end{equation*}
where $\rho_{k,i}$ is the distance between $X_{i}$ and its $k$-nearest neighbor among $X_{1},X_{2},...,X_{n}$, $V_{d}:=\pi^{\frac{d}{2}}/\Gamma(1+\frac{d}{2})$ is the volume of a unit $d$-dimensional Euclidean ball, $\Psi(k)=-\gamma+\sum_{i=1}^{k-1}1/i$ is the digamma function and $\gamma$ is the Euler-Mascheroni constant \cite{penrose2013limit,berrett2019efficient}. 

The consistency and CLT for the above estimator in a manifold setting were shown in \cite{penrose2013limit} by stabilization theory. However, a non-trivial bias term arises for $d\ge 4$, rendering the above estimator asymptotically inefficient (in the sense of \cite[page 367]{van2000asymptotic}). To have an (asymptotically) unbiased and efficient estimator, the following weighted $k$-NN estimator was proposed in \cite{berrett2019efficient}. Defining $\xi_{n}$ as the binomial point process associated with $X_{1},X_{2},...,X_{n}$ the proposed estimator could be viewed as a functional of $\xi_n$, and is given by
\begin{align*}
F_{n}^{\text{SE}}(\xi_{n}):=\sum_{i=1}^{n}f_{n}^{w}(X_{i},\xi_{n})\quad \text{where}\quad f_{n}^{w}(X_{i},\xi_{n}):=\frac{1}{n}\sum_{j=1}^{k}w_{j}\log\left(\frac{(n-1)V_{d}\rho_{j,i}^{d}}{e^{\Psi(j)}}\right),\label{weighted}
\end{align*}
and $w_{j}$ are the weights (such that $\sum_{j=1}^{n}w_{j}=1$) that are chosen to cancel the dominant bias term and make $F_{n}^{\text{SE}}$ asymptotically efficient. We now provide our normal approximation results for the above estimator, based on a slightly modified set of assumptions considered in~\cite{berrett2019efficient}.

\begin{Theorem}
Consider a density q supported on a compact set $\mbb{X}\subset\mbb{R}^{d}$ with respect to the Lebesgue measure $\mbb{Q}$ in $\mbb{R}^{d}$. 
Let $\mathcal{A}$ denote the class of all decreasing functions $a:(0,\infty)\rightarrow [1,\infty)$ such that $a(\delta)=o(\delta^{-\epsilon})\ \text{as}\ \delta\searrow 0,$ for every $\epsilon>0$. For $a\in\mcal{A}$, let $q$ be $m\coloneqq \lceil \beta \rceil-1$ times differentiable (for $\beta$>0). For $x\in\mbb{X}$, let $r_{a}(x):=(8d^{\frac{1}{2}}a(q(x)))^{-\frac{1}{\beta\wedge 1}}$ and define:
\begin{equation*}
M_{q,a,\beta}(x):=\max\left\{\underset{t=1,...,m}{\max}~\frac{\|q^{(t)}(x)\|}{q(x)},\underset{y\in B_{x}^{o}(r_{a}(x))}{\sup}~\frac{\|q^{(m)}(y)-q^{(m)}(x)\|}{q(x)\|y-x\|^{\beta-m}}\right\},
\end{equation*}
where $B_{x}^{o}(r):=B_{x}(r)\backslash\{x\}$.
Let the density $q$ also satisfy that following conditions:
\begin{equation*}
    \|q\|_{\infty}\le \gamma, \underset{x:q(x)\ge \delta}{\sup}~M_{q,a,\beta}(x)\le a(\delta),\ \forall \delta>0.
\end{equation*}
Define the class of weights as follows: for $k\in\mbb{N}$, let
\begin{align}\label{w}
 \resizebox{0.99\hsize}{!}{$\mcal{W}^{k}:=\bigg\{w\in\mbb{R}^{k}:\sum_{j=1}^{k}w_{j}\frac{\Gamma(j+\frac{2l}{d})}{\Gamma(j)}=0,\ \text{for}\ l=1,...,\lfloor\frac{d}{4}\rfloor, \sum_{j=1}^{k}w_{j}=1, w_{j}=0,\ \text{if}\ j\notin\left\{\lfloor\frac{k}{d}\rfloor,\lfloor\frac{2k}{d}\rfloor,...,k\right\}\bigg\}.$}
\end{align}
Then under the conditions of~\cite[Theorem 1]{berrett2019efficient}, that is, for any $\alpha>d$, $\beta>\frac{d}{2}$ and for any two deterministic sequences of positive integers $k_{0,n}^{*}=k_{0}^{*}$, $k_{1,n}^{*}=k_{1}^{*}$  with $k_{0}^{*}\le k_{1}^{*}$, $k_{0}^{*}/\log^{5}n\rightarrow \infty$, $k_{1}^{*}=O(n^{\tau_{1}})$ and $k_{1}^{*}=o(n^{\tau_{2}})$, where, with $\beta^{*}:=\beta\wedge 1$,
\begin{align*}
	\tau_{1}<\min\left\{\frac{2\alpha}{5\alpha+3d},\frac{\alpha-d}{2\alpha},\frac{4\beta^{*}}{4\beta^{*}+3d}\right\},\quad \text{and}\quad	\tau_{2}:=\min\left\{1-\frac{\frac{d}{4}}{1+\lfloor\frac{d}{4}\rfloor},1-\frac{d}{2\beta}\right\},
\end{align*}
as well as the assumption \eqref{lowerQ}, there exits a constant $C_{0}>0$ (independent of $k,n$) such that
\begin{align*}
d_{K}\left(\frac{F_{n}^{\text{SE}}(\xi_{n})-H(q)}{\sqrt{\mathrm{Var}F_{n}^{\text{SE}}(\xi_{n})}},N\right)\le C_{0}\sqrt{\frac{k}{n}},
\end{align*}
for $k_{0}^{*}\le k\le k_{1}^{*}$.
\label{weightedknn}
\end{Theorem}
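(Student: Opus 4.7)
The plan is to write $F_n^{\text{SE}}(\xi_n) = \sum_{i=1}^n f_n^w(X_i, \xi_n)$ as a sum of score functions, and apply Theorem \ref{MTbino} by following the three-step procedure given at the end of Section \ref{sec:main}, carefully tracking how every bound depends on $k$ so that the final rate $\sqrt{k/n}$ --- which is sub-optimal by a factor of $\sqrt{k}$ relative to the presumably optimal $1/\sqrt{n}$ --- emerges correctly.

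For Step 1 (strong stabilization), I adapt the cone-decomposition argument of \cite[Lemma 6.1]{penrose2001central} used in Example \ref{eg:knn}. Define $R_x$ to be the smallest $r$ such that each of $C_d$ disjoint congruent cones centered at $x$ contains at least $k+1$ points of $\xi_n \cap B_x(r)$; a standard geometric argument then shows that $D_x F_n^{\text{SE}}$ is determined by $\xi_n \cap B_x(R_x)$. The density lower bound \eqref{lowerQ} combined with binomial concentration supplies the exponential tail bound for $R_x$ at scale $r \gtrsim (k/n)^{1/d}$, verifying the binomial version of Assumption \ref{decay}. Since the add-one cost $D_x F_n^{\text{SE}}$ is almost surely non-zero, one must take $\mathbb{K} = \mathbb{X}$, which trivializes Assumption \ref{kexp} and gives $\Theta_{\mathbb{K},n} \asymp n$.

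For Step 2 (moment condition), by Proposition \ref{prop 2.1},
\begin{equation*}
D_x F_n^{\text{SE}}(\xi_n) = f_n^w(x, \xi_n \cup \{x\}) + \sum_{X_i \in \xi_n} D_x f_n^w(X_i, \xi_n).
\end{equation*}
The weights in \eqref{w} have support of size $d$ with uniformly bounded magnitude, so the first summand has $L^p$-norm of order $1/n$ after using \eqref{lowerQ} and the boundedness/smoothness of $q$ to control the moments of $\log \rho_{j,i}^d$. The sum is supported on the $O(k)$ points whose $k$-nearest-neighbour ball contains $x$ (standard packing). A careful analysis combining the algebraic cancellations $\sum_j w_j \Gamma(j+2l/d)/\Gamma(j) = 0$ encoded in \eqref{w} with the smoothness bound $\sup_{x: q(x) \ge \delta} M_{q,a,\beta}(x) \le a(\delta)$ then yields the $L^p$-bound for $p > 4$ required by the binomial version of Assumption \ref{fmc}. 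For Step 3, the proof of \cite[Theorem 1]{berrett2019efficient} establishes $n\,\Var F_n^{\text{SE}} \to V(q, d) > 0$ uniformly over $k \in [k_0^*, k_1^*]$, giving the variance lower bound. Feeding the resulting $k$-dependent $b_j$-estimates into Theorem \ref{MTbino} (via the underlying Theorem \ref{general}) and collecting the dominant term produces the stated rate.

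The main obstacle will be the sharp moment estimate of Step 2: a crude triangle-inequality bound on the $O(k)$-term sum in $\sum_{X_i} D_x f_n^w(X_i, \xi_n)$ gives only $O(k/n)$, which would propagate to a sub-optimal rate of $k/\sqrt{n}$ rather than $\sqrt{k/n}$. Recovering the additional $\sqrt{k}$-factor requires carefully exploiting the vanishing-moment conditions in \eqref{w} and the higher-order smoothness of $q$ --- in a spirit similar to (though technically distinct from) the variance computation of \cite{berrett2019efficient} for the full estimator, but applied at the level of the add-one cost operator.
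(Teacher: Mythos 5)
Your proposal only addresses the statistic centered at its own mean, whereas the theorem centers at the true entropy $H(q)$. This is the main gap: Corollary \ref{corobino} (equivalently Theorem \ref{MTbino}) controls $d_{K}\bigl((F_{n}^{\text{SE}}-\mbb{E}F_{n}^{\text{SE}})/\sqrt{\Var F_{n}^{\text{SE}}},\,N\bigr)$, and passing to the centering at $H(q)$ requires a separate argument bounding the normalized bias $\Delta h=(H(q)-\mbb{E}F_{n}^{\text{SE}})/\sqrt{\Var F_{n}^{\text{SE}}}$ and converting this shift into a Kolmogorov-distance increment (via a triangle inequality costing $2d_{1}$ plus $\sup_{t}|\Phi(t+\Delta h)-\Phi(t)|\lesssim|\Delta h|$). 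The paper does exactly this, invoking the bias bound of \cite[Corollary 4]{berrett2019efficient}; that is where the hypotheses $\alpha>d$, $\beta>d/2$, $k\le k_{1}^{*}=o(n^{\tau_{2}})$, and crucially the vanishing-moment constraints $\sum_{j}w_{j}\Gamma(j+2l/d)/\Gamma(j)=0$ in \eqref{w} actually enter. Your proposal never uses any of these conditions, which signals that the bias step is missing; its contribution is $|\Delta h|\lesssim k^{1/2}/n\le\sqrt{k/n}$.

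Relatedly, your mechanism for producing the $\sqrt{k}$ factor is misdirected. The vanishing-moment conditions in \eqref{w} do not enter the moment bounds for the add-one cost; in the paper they serve only to kill the dominant bias terms. The $k$-dependence of the main term $d_{1}$ comes instead from: (a) rescaling to $nF_{n}^{\text{SE}}$ so that the variance condition reads $\Var(nF_{n}^{\text{SE}})\gtrsim n$; (b) a counting/H\"older argument in the style of \cite[Lemma 5.6]{lachieze2019normal}, using only that $w$ has at most $d$ non-zero bounded entries and the log-moment bounds of \cite{singh2016finite}, which gives a $(4+p_{0})$-th moment constant $c\lesssim k^{p_{0}/(4+2p_{0})}$; and (c) a refined version of the binomial normal-approximation theorem (the paper's Lemma \ref{kboundlemma}) making the dependence of the final constant on $c$ explicit, which is needed because the constant in Theorem \ref{MTbino} depends on the moment bound $H$ and hence on $k$. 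This yields $d_{1}\lesssim n^{-1/2}k^{4p_{0}/((4+p_{0})(4+2p_{0}))}\le\sqrt{k/n}$; the exponent of $k$ is strictly below $1/2$, so no cancellation at the level of the add-one cost is needed or used. A minor further point: Theorem \ref{MTbino} is not derived from Theorem \ref{general}; it rests on \cite[Theorem 4.2]{lachieze2019normal}, since the second-order Poincar\'e inequality has no binomial counterpart.
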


\begin{Remark}
We make the following remarks regarding the above result.
\begin{itemize}
\item[(i)] Asymptotic limit theorems for estimators of the Shannon entropy have been obtained, for example, in \cite{penrose2013limit}, \cite{berrett2019efficient}. The result in~\cite{penrose2013limit} is a non-central limit theorem, as their estimator suffers form bias in higher dimensions. The result in \cite{berrett2019efficient}, is a  central limit theorem, which was established under the case that the density is supported on $\mathbb{R}^d$. However, no normal convergence rate results were provided in the above works. To our best knowledge, the above result, is the first normal convergence rate result with the true center $H(q)$.
\item[(ii)]  It is also possible to obtain a similar result using the method in \cite{lachieze2019normal} since the estimator $F_{n}^{\text{SE}}(\xi_{n})$ is expressible as a sum of score functions. 

\item[(iii)] Furthermore, the result in Theorem~\ref{weightedknn} is provided for the binomial case. The asymptotic unbiasedness and efficiency of the weighted $k$-NN estimator of the Shannon entropy based on Poisson point process is open, to the best of our knowledge. 
\end{itemize}
\end{Remark}

\subsection{Euler Characteristic}\label{ECsetting}

Recall Example \ref{egec}. Following the setting of \cite{krebs2021approximation}, consider a bounded density $q$ on $[0,1]^{d}$. Let $\xi_{n}$ be a binomial point process associated with $n$ i.i.d. samples according to the density $q$ and let $\mcal{P}_{n}$ be a Poisson point process with intensity measure $n\mbb{Q}$, where $\mbb{Q}$ has a density $q$ with respect to the Lebesgue measure, i.e., we set $s=n$.

Construct the \v{C}ech complex or the Vietoris-Rips complex $K_{r}(n^{\frac{1}{d}}\mcal{P}_{n}),K_{r}(n^{\frac{1}{d}}\xi_{n})$, see Definition \ref{VR} and \ref{Cech} based on the Poisson point process $\mcal{P}_{n}$ and the binomial point process $\xi_{n}$ respectively with $r>0$ as the filtration time. Here, $K_{r}$ represents both complexes for simplicity. The factor $n^{\frac{1}{d}}$ corresponds to the thermodynamic/critical regime \cite{goel2019strong,owada2020limit,trinh2017remark} such that this is equivalent to the case $nr_{n}^{d}\rightarrow r\in(0,\infty)$ with $r_{n}$ as the filtration time. With the above constriction, the Euler characteristic is given by
\begin{align*}
F_{n}^{\text{EC}}(\mcal{P}_{n}):=\chi(K_{r}(n^{\frac{1}{d}}\mcal{P}_{n}))~\quad\text{and}\quad F_{n}^{\text{EC}}(\xi_{n}):=\chi(K_{r}(n^{\frac{1}{d}}\xi_{n}))
\end{align*}
where $\chi(K(\eta))$ for a filtration $K$ constructed from a point cloud sampled from a point process $\eta$ is defined in Example \ref{egec}.


\begin{Theorem}\label{thm:eulerc}
Under the above setting, for some $T>0$ such that $0<r\le T$, there exists a constant $C_{0}>0$ such that for $n\ge 1$,
	\begin{equation*}
	d_{K}\left(\frac{F_{n}^{\text{EC}}(\mcal{P}_{n})-\mathbb{E}F_{n}^{\text{EC}}(\mcal{P}_{n})}{\sqrt{\mathrm{Var}F_{n}^{\text{EC}}(\mcal{P}_{n})}},N\right)\le C_{0}\frac{1}{\sqrt{n}}.
	\end{equation*}
	And for $n\ge 2$,
	\begin{equation*}
	d_{K}\left(\frac{F_{n}^{\text{EC}}(\xi_{n})-\mathbb{E}F_{n}^{\text{EC}}(\xi_{n})}{\sqrt{\mathrm{Var}F_{n}^{\text{EC}}(\xi_{n})}},N\right)\le C_{0}\frac{1}{\sqrt{n}}.
	\end{equation*}
\end{Theorem}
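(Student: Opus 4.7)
The plan is to apply Corollary~\ref{MTPoi} in the Poisson case and Theorem~\ref{MTbino} in the binomial case, following the three-step recipe given at the end of Section~\ref{sec:main}. Throughout I take $\mbb{X}=[0,1]^d$, $s=n$, $\omega=d$ in~\eqref{assQ}, and set $\mbb{K}=[0,1]^d$. For Step~1, Example~\ref{egec} (via~\cite{krebs2021approximation}) already tells us that $\chi$ of either the \v{C}ech or the Vietoris-Rips complex at filtration $r$ is strongly stabilizing with radius $2r$ on the rescaled cloud $n^{1/d}\mcal{P}_n$; in the original coordinates this amounts to $R_x = 2r\, n^{-1/d}$, so $s^{1/\omega}R_x = 2r$ is a deterministic constant and Assumption~\ref{decay} is trivially satisfied. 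Since $d_n(x,\mbb{K}) = 0$ for every $x \in \mbb{X}$, the $\mbb{K}$-exponential bound in Assumption~\ref{kexp} likewise holds with $k_1 = 1$. The same statements go through verbatim for the binomial process.

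For Step~2, when a point $x$ is added to the configuration, $D_x F_n^{\text{EC}}$ is an alternating sum over simplices that contain $x$ or are created/destroyed by its insertion, and every such simplex has its remaining vertices inside $B_x(2r n^{-1/d})$. Writing $N_x = |\mcal{P}_n \cap B_x(2r n^{-1/d})|$, one has the crude bound $|D_x F_n^{\text{EC}}| \le 2^{N_x + 1}$. Because $q$ is bounded on the compact cube, $N_x$ is stochastically dominated by a Poisson (respectively binomial) variable whose mean is bounded uniformly in $n$ and $x$, so $\mbb{E}\, 2^{p N_x}$ is uniformly bounded for every $p > 4$. The same estimate covers $D_x (F_n^{\text{EC}})^{x^*}$ and the binomial case, verifying Assumption~\ref{fmc}.

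Step~3 is the crux. With $\mbb{Q}$ of bounded density on the compact cube and $\mbb{K}=\mbb{X}$, inspection of~\eqref{eq:Thetadef} gives $\Theta_{\mbb{K},n} \asymp n$, so Corollaries~\ref{coros} and~\ref{corobino} will yield the desired $n^{-1/2}$ rate as soon as one establishes
\begin{equation*}
\Var F_n^{\text{EC}}(\mcal{P}_n) \ge c\, n \quad \text{and} \quad \Var F_n^{\text{EC}}(\xi_n) \ge c\, n
\end{equation*}
for some $c>0$ independent of $n$. The matching upper bound $\Var F_n^{\text{EC}}=O(n)$ is immediate from the locality of the add-one cost together with the Poincar\'e-type inequality for Poisson functionals. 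To produce the lower bound I would adapt the standard decoupling argument (compare the variance analysis in~\cite{krebs2021approximation}): partition $[0,1]^d$ into $\Theta(n)$ cubes of side $\Theta(n^{-1/d})$, use the deterministic stabilization radius $2r n^{-1/d}$ to argue that the local contributions of sufficiently separated cubes are conditionally independent given the configuration on their complement, and then exhibit a fixed local pattern whose presence flips the Euler characteristic by a non-zero deterministic amount with probability bounded away from zero inside each cube. Summing the resulting $\Omega(1)$ per-cube variance contributions over the $\Theta(n)$ cubes yields the claim.

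The hard part will be formalizing this decoupling-plus-non-degeneracy argument cleanly (and verifying it simultaneously for the Poisson and binomial laws); once that is in place, the remainder of the proof is a direct substitution into Corollary~\ref{coros} and Corollary~\ref{corobino}.
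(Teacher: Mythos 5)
Your proposal follows the paper's own three-step route almost exactly. Steps 1 and 2 coincide with the paper's argument: the deterministic stabilization radius $2rn^{-1/d}$ makes Assumption~\ref{decay} hold trivially by taking $c_{1}$ large enough (this is precisely the paper's inequality~\eqref{exdEC}); taking $\mbb{K}=[0,1]^{d}$ kills the exponential in Assumption~\ref{kexp}; and your moment bound $|D_{x}F_{n}^{\text{EC}}|\le 2^{N_{x}+1}$ followed by a generating-function estimate for $N_{x}$ is essentially the same computation the paper carries out explicitly in~\eqref{60} by counting simplices incident to the inserted point (both reduce to the uniform bound $e^{C(d,T)\|q\|_{\infty}r^{d}}$). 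The one genuine divergence is Step 3. The paper does not prove the variance lower bound $\Var F_{n}^{\text{EC}}\ge cn$ at all: it imports it from \cite{penrose2001central} (Theorem 2.1) and \cite{krebs2021approximation} (Proposition 4.6), for both the Poisson and binomial laws, and then concludes immediately via Corollaries~\ref{coros} and~\ref{corobino}. You instead sketch a decoupling-plus-non-degeneracy derivation. That sketch is the standard mechanism underlying the cited results, so the strategy is sound; but, as you yourself flag, it is the hard part and you have not executed it. In particular, exhibiting a local pattern whose occurrence changes $\chi$ by a fixed non-zero amount conditionally on the outside configuration, and making the decoupling rigorous simultaneously for the Poisson process and the binomial process (whose increments over disjoint cubes are not independent), requires a non-trivial argument. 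So, read as a self-contained proof, your proposal has a gap exactly at the variance lower bound; read as an application of known variance asymptotics, it is correct and matches the paper's proof. A minor remark: the variance upper bound $O(n)$ you mention is not needed anywhere, since conditions~\eqref{varss} and~\eqref{varnn} only require the lower bound.
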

\begin{Remark}
We make the following remarks about the above result.
\begin{itemize}
    \item[(1)] CLTs and functional limit theorem for Euler characteristic has been studied in \cite{thomas2021functional} by viewing Euler characteristic as a process indexed by $r$. Normal approximation rate of Euler characteristic under binomial and Poisson sampling was obtained in \cite{krebs2021approximation}, by computing certain geometric quantities appearing in the general result in~\cite{lachieze2017new}. Our flexible stabilization method has advantages of avoiding computing several complicated geometric quantities (as done in \cite[Proof of Theorem 3.2]{krebs2021approximation}). 
    \item[(2)] For the Poisson case, \cite{lachieze2020quantitative} require a specific form of $A_{x}$, rendering their result sub-optimal, i.e., the term $\sqrt{{b_{n}}/{n}}$ in \cite[Proposition 1.12]{lachieze2020quantitative},  leads to a slower rate than ${1}/{\sqrt{n}}$ that we obtain above.
    \item [(3)] While Euler characteristic could also be expressible as a sum of score functions, one could possibly leverage the results of~\cite{lachieze2019normal} to derive normal convergence rate. Our goal in this example is to demonstrate the flexibility of our general result.
\end{itemize}

\end{Remark}

All above applications consider stabilizing statistics when there are known tail bounds for the radius of stabilization $R_{x}$, i.e., quantities \eqref{IJ1} to \eqref{IJ3}. Our Theorem \ref{general}, however, can deal with the case when we do not have immediate bounds for those probabilities based on the flexible cost function $D_{x}F_{s}(A_{x})$. We illustrate the above mentioned idea by the following application concerning the minimal spanning tree.

\subsection{Edge length of the Minimal Spanning Tree}\label{sec:mst}

Consider a finite set $V\subset\mbb{R}^{d}$ (usually it is embedded in an underlying graph $G:=(V,E)$). A minimal spanning tree $T$ of $V$ is a connected graph with the vertex set $V$. Define
\begin{equation*}
    M(V):=\underset{T}{\min}\sum_{e\ \text{is an edge of}\ T}|e|,
\end{equation*}
where the minimal is taken over all possible minimal spanning tree T of $V$. According to \cite{penrose2005multivariate} and \cite{chatterjee2017minimal}, the total edge length statistic $M(V)$ does satisfy certain required stabilization properties. However, to the best of our knowledge, there is no result on the rates of  stabilization including quantitative bound on the tail probability of the radius of stabilization. This results in a major difficulty of deriving normal approximation rate for $M(V)$ by some classical methods like \cite{lachieze2019normal}. Our flexible stabilization method can yield the following theorem by picking $A_{x}$ ``strategically'' to make use of some existing bounds.

\begin{Theorem}
    Following the Euclidean setting in \cite{lachieze2020quantitative}, consider $B_{0}$ as the unit hypercube in $\mbb{R}^{d}$ centered at the origin and let $B_{n}:=nB_{0}$, $n\in \mbb{N}_{+}$. Given a homogeneous Poisson process $\mcal{P}(\lambda)$ on $\mbb{R}^{d}$ with intensity $\lambda>0$, let
    \begin{equation*}
    F_{B_{n}}^{\text{MST}}(\mcal{P}(\lambda)):=M(\mcal{P}(\lambda)|_{B_{n}}).
    \end{equation*}
    Then, there exist constants $C_{0}>0, 1>D_{1}>0$ and $D_{2}>0$ not depending on $n$ such that 
    \begin{equation*}
        d_{K}\left(\frac{F_{B_{n}}^{\text{MST}}(\mcal{P}(\lambda))-\mbb{E}F_{B_{n}}^{\text{MST}}(\mcal{P}(\lambda))}{\sqrt{\Var F_{B_{n}}^{\text{MST}}(\mcal{P}(\lambda))}},N\right)\le 
        \left\{
        \begin{aligned}
        &C_{0}n^{-D_{1}}, \quad \text{if}\ d=2,\\
        &C_{0}(\log n)^{-D_{2}}, \quad \text{if}\ d\ge 3.
        \end{aligned}
        \right.
    \end{equation*}
    \label{mst}
\end{Theorem}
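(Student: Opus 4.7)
The plan is to invoke the general Theorem~\ref{general} directly, with the flexible set $A_x$ chosen as a Euclidean ball (or cube) of radius $r_n$ centered at $x$ and intersected with $B_n$. Since no exponential tail bound for the radius of strong stabilization of MST is known, Corollary~\ref{MTPoi} is unavailable, and this is exactly the situation that Remark~\ref{rem3.1}(ii) is designed for. Theorem~\ref{general} only asks us to control the six quantities $\gamma_1',\ldots,\gamma_6'$ built from the moments $b_1,\ldots,b_5$ defined in~\eqref{firstorder}--\eqref{secondorder3}. The idea is to balance a polynomial (respectively logarithmic) decay in $r_n$ coming from the range of influence of the MST add-one cost against the polynomial growth in $r_n$ produced by crude uniform moment bounds on $D_xF(A_x)$.

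First I would gather three external inputs. (i) A variance lower bound of volume order, $\Var F_{B_n}^{\text{MST}}(\mcal{P}(\lambda)) \gtrsim |B_n| = n^d$, due to Kesten--Lee and refined in~\cite{chatterjee2017minimal}. (ii) A uniform $L^p$ bound on the first- and second-order add-one costs of $M$: each such cost is a bounded difference of edge-length sums in a minimal spanning tree of points in $B_n$, hence pathwise bounded by $C n^{\kappa}$ for some $\kappa=\kappa(d)$. (iii) The quantitative non-localization estimate from~\cite{penrose2005multivariate,chatterjee2017minimal}: the probability that the first-order (resp.\ second-order) add-one cost of $M$ on $\mcal{P}(\lambda)\cap B_n$ fails to coincide with its restriction to $\mcal{P}(\lambda)\cap B(x,r_n)$ decays as $r_n^{-\alpha}$ when $d=2$ and as $(\log r_n)^{-\beta}$ when $d\ge 3$, for suitable $\alpha,\beta>0$.

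With these ingredients in hand, the truncated cost $D_xF(A_x)$ depends only on Poisson points in $B(x,r_n)$, so its $L^4$-moments are at most polynomial in $r_n$; the integrals feeding $\gamma_3',\gamma_4',\gamma_5'$ and the $b_2$-part of $\gamma_6'$ are therefore bounded by products of positive powers of $r_n$ and $n^d$ divided by the appropriate power of $\Var F_{B_n}^{\text{MST}}\gtrsim n^d$. The remainder moments $b_1,b_3,b_4,b_5$ are handled by H\"older's inequality, interpolating between the high-order uniform bound from (ii) and the small probability from (iii); this produces factors $r_n^{-\alpha'}$ (resp.\ $(\log r_n)^{-\beta'}$). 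Summing the six contributions gives a bound of the form $C_1 r_n^{c} n^{-c'} + C_2 r_n^{-\alpha'}$ when $d=2$, and the analogous expression with $\log r_n$ replacing $r_n$ in the second term when $d\ge 3$. Optimizing $r_n$ as a small positive power of $n$ (resp.\ a small positive power of $\log n$) then yields the stated rates $n^{-D_1}$ and $(\log n)^{-D_2}$.

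The main obstacle will be establishing the second-order non-localization in item~(iii) with a usable rate: the Kesten--Lee and Chatterjee--Sen machinery is tailored to the first-order add-one cost, and transferring the polynomial (respectively logarithmic) decay to the differences $D_{x_1}M^{x_2}-D_{x_1}M^{x_2}(A_{x_1})$ and $D_{x_1}M^{x_2}(A_{x_1})-D_{x_1}M(A_{x_1})$ requires ruling out that inserting the auxiliary point $x_2$ enlarges the effective range of influence by more than a constant factor. Once this second-order range of influence is quantified, the remainder is a mechanical application of Theorem~\ref{general} together with a single one-parameter optimization in $r_n$.
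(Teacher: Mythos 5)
Your proposal follows essentially the same route as the paper: apply Theorem~\ref{general} with $A_{x}=B_{n}\cap\{x+n^{\alpha}B_{0}\}$, combine the volume-order variance lower bound and uniform moment bounds on the add-one costs with a decay estimate for $\mbb{E}|D_{x}F-D_{x}F(A_{x})|^{q}$ of order $n^{-E_{1}}$ ($d=2$) or $(\log n)^{-E_{2}}$ ($d\ge 3$), and balance against the $n^{d+d\alpha}$ measure of the overlap region. The one step you flag as the main obstacle — the second-order localization, including the version with an auxiliary added point — is not re-derived in the paper but imported directly from \cite[Propositions 3.7, 3.9 and 3.11]{lachieze2020quantitative} (with a footnote extending the moment exponent from $q_{0}=1$ to general $q_{0}\ge 1$), so your plan goes through as written once that citation is in hand.
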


\begin{Remark} The normal approximation rate of the edge length statistic of the minimal spanning tree has been derived previously in \cite{chatterjee2017minimal} and in \cite{lachieze2020quantitative} with similar results as Theorem~\ref{mst}. However, \cite{chatterjee2017minimal} only focused on the minimal spanning tree therefore it is hard to generalize for other stabilizing functionals. \cite{lachieze2020quantitative} used the similar idea of introducing the set $A_{x}$ but their bounds usually give sub-optimal normal convergence rates (for e.g., see Remark~\ref{rem:knn} regarding the total edge length of k-nearest neighbor graphs in Section~\ref{sec:knnel}) than ours due to the specific form of their set $A_{x}$ lacking flexibility. 
\end{Remark}

\section{Proofs for Section~\ref{sec:main}}\label{proofs}
\subsection{Proof of Theorem~\ref{general}}
We first prove our main theorem for Poisson case, Theorem \ref{general}, based on the flexible cost function $D_{x}F(A_{x})$. Without loss of generality, all the constant $C>0$ mentioned in this section refer to universal constants that might take different values in each step. Our key tool for proving Theorem~\ref{general} is based on \cite[Theorem 1.2]{last2016normal}, which we restate below. 
\begin{Theorem}[\cite{last2016normal}]
	Let $F$ be a measurable functional satisfying the following two conditions: 
	\begin{align*}
	\mathbb{E}F(\mcal{P}(\lambda))^{2}<\infty \qquad\text{and}\qquad\mathbb{E}\int(D_{x}F)^{2}\lambda(dx)<\infty.
	\end{align*}
	Let $N$ be a standard normal random variable. Then,
	\begin{align*}
	d_{K}\left(\frac{F-\mathbb{E}F}{\sqrt{\Var F}},N\right)\le \sum_{i=1}^6\gamma_{i},
	\end{align*}
	where
\begin{align*}
\gamma_{1}&\coloneqq\frac{4}{\Var F}\left(\int (\mathbb{E}(D_{x_{1}}F)^{2}(D_{x_{2}}F)^{2})^{\frac{1}{2}} (\mathbb{E}(D_{x_{1},x_{3}}F)^{2}(D_{x_{2},x_{3}}F)^{2})^{\frac{1}{2}}\lambda^{3}(d(x_{1},x_{2},x_{3}))\right)^{\frac{1}{2}},\\
\gamma_{2}&\coloneqq\frac{1}{\Var F}\left(\int  (\mathbb{E}(D_{x_{1},x_{3}}F)^{2}(D_{x_{2},x_{3}}F)^{2})^{2}\lambda^{3}(d(x_{1},x_{2},x_{3}))\right)^{\frac{1}{2}},\\
\gamma_{3}&\coloneqq\frac{1}{(\Var F)^{\frac{3}{2}}}\int \mathbb{E}|D_{x}F|^{3}\lambda (dx),\\
\gamma_{4}&\coloneqq\frac{1}{2(\Var F)^{2}}(\mathbb{E}(F-\mathbb{E}F)^{4})^{\frac{1}{4}}\int (\mathbb{E}(D_{x}F)^{4})^\frac{3}{4}\lambda (dx),\\
\gamma_{5}&\coloneqq\frac{1}{\Var F}\left(\int \mathbb{E}(D_{x}F)^{4}\lambda (dx)\right)^{\frac{1}{2}},\\
\gamma_{6}&\coloneqq\frac{1}{\Var F}\left(\int 6(\mathbb{E}(D_{x_{1}}F)^{4})^{\frac{1}{2}}\left(\mathbb{E}(D_{x_{1},x_{2}}F)^{4}\right)^{\frac{1}{2}}+3\mathbb{E}(D_{x_{1},x_{2}}F)^{4} \lambda^{2}(d(x_{1},x_{2}))\right)^{\frac{1}{2}}.
\end{align*}
\label{6gamma}
\end{Theorem}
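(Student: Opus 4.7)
The plan is to follow the Malliavin--Stein strategy for Poisson functionals. The starting point is the Stein characterization of the standard normal distribution applied to $W := (F - \mbb{E}F)/\sqrt{\Var F}$: using the Stein solution $f_{t}$ of $f'(w) - wf(w) = \mbf{1}_{(-\infty,t]}(w) - \Phi(t)$, which satisfies $\|f_{t}\|_{\infty} \le 1$ and $\|f_{t}'\|_{\infty} \le 1$, one has
\[
d_{K}(W,N) \le \sup_{t\in\mbb{R}} \bigl|\mbb{E}[f_{t}'(W) - W f_{t}(W)]\bigr|,
\]
and the task reduces to controlling this supremum \emph{uniformly in} $t$ using Malliavin calculus on Poisson space.

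Next, I would use the fundamental identity $F - \mbb{E}F = \delta(-DL^{-1}F)$, where $\delta$ is the Skorohod integral and $L^{-1}$ is the pseudo-inverse of the Ornstein--Uhlenbeck generator, together with Mecke's integration-by-parts formula, to rewrite
\[
\mbb{E}[W f_{t}(W)] = \frac{1}{\sqrt{\Var F}}\, \mbb{E}\!\left[\int D_{x} f_{t}(W)\bigl(-D_{x}L^{-1}F\bigr)\,\lambda(dx)\right].
\]
Because $D_{x}$ is a \emph{difference} operator, one has $D_{x}f_{t}(W) = f_{t}(W + D_{x}F/\sqrt{\Var F}) - f_{t}(W)$, so a first-order expansion forces the second-order operators $D_{x_{1},x_{2}}F$ to appear in the remainder. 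After splitting, the main contribution reduces to $\mbb{E}[f_{t}'(W)(1-T)]$ with $T := \langle DF, -DL^{-1}F\rangle_{L^{2}(\lambda)}/\Var F$; controlling $\Var(T)$ requires a second application of Mecke and Cauchy--Schwarz across three integration variables $(x_{1},x_{2},x_{3})$. This double iteration yields precisely the structure of $\gamma_{1}$ and $\gamma_{2}$: the ``fiber'' variable $x_{3}$ carries the second-order operators while the ``outer'' variables $x_{1},x_{2}$ carry the first-order operators, and the geometric mean is what Cauchy--Schwarz produces.

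The remaining terms $\gamma_{3},\gamma_{4},\gamma_{5},\gamma_{6}$ are forced by the fact that the Kolmogorov Stein solution is only Lipschitz, so one cannot simply invoke a second-order Taylor expansion on $f_{t}$. Instead, one rewrites the Taylor remainder via telescoping difference identities. A Berry--Esseen-type contribution $\gamma_{3}$ emerges from the cubic remainder integrated against $\lambda$. The fourth-moment correction $\gamma_{5}$ arises from bounding an $L^{2}$-term in the same remainder by Cauchy--Schwarz and Mecke. The term $\gamma_{4}$ comes from invoking a Rosenthal/Poincar\'e inequality on Poisson space, which controls $\mbb{E}(F - \mbb{E}F)^{4}$ by integrals of $\mbb{E}(D_{x}F)^{4}$ together with a $(\Var F)^{1/2}$ factor; this is the step that translates a fourth-moment concentration into a point-mass-at-$t$ control. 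Finally, $\gamma_{6}$ encodes a second variance-type estimate on the inner product $\langle DF, -DL^{-1}F\rangle$ that uses the fourth moments of both first- and second-order difference operators, and is what lets one pass from a Wasserstein-type bound ($\gamma_{1},\gamma_{2}$ alone) to a Kolmogorov-type bound.

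I expect the main obstacle to be handling the non-smoothness of $f_{t}$ for the Kolmogorov metric, because this is what forces all four extra terms $\gamma_{3}, \gamma_{4}, \gamma_{5}, \gamma_{6}$ into the bound and requires the delicate combination of smoothing of $\mbf{1}_{(-\infty,t]}$, anti-concentration estimates for $W$ near $t$ (supplied by the fourth-moment input that produces $\gamma_{4}$), and Schulte--Peccati-style inequalities that transfer regularity from $f_{t}$ onto the Malliavin difference operators. The bookkeeping needed to propagate these remainders through repeated Cauchy--Schwarz, Minkowski, and Mecke identities is substantial, but each term in $\gamma_{1},\ldots,\gamma_{6}$ is essentially the best one can extract from the moment information that is available.
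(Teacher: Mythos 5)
This theorem is not proved in the paper at all: it is quoted verbatim as \cite[Theorem 1.2]{last2016normal} and used as a black box, so the only meaningful comparison is with the original proof of Last, Peccati and Schulte. Your outline correctly identifies that proof's architecture: Stein's equation for the Kolmogorov metric, the identity $F-\mbb{E}F=\delta(-DL^{-1}F)$ combined with the Mecke formula, the fact that $D_x$ is a difference rather than a derivation (which is what injects the iterated operators $D_{x_1,x_2}F$ into the remainder), the variance estimate on $\langle DF,-DL^{-1}F\rangle_{L^2(\lambda)}$ producing $\gamma_1,\gamma_2$, and the extra terms $\gamma_3,\dots,\gamma_6$ being the price of the non-smooth test functions, with the fourth centred moment of $F$ in $\gamma_4$ supplied by a Poincar\'e-type lemma (this is \cite[Lemma 4.3]{last2016normal}, which the present paper also invokes separately).

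The concrete gap is that your plan never says how the quantity $-D_xL^{-1}F$, which is what the abstract Malliavin--Stein bound actually produces, gets converted into moments of $D_xF$ and $D_{x_1,x_2}F$ --- yet every $\gamma_i$ in the statement is expressed purely in those terms. The device that accomplishes this is Mehler's formula: the pathwise representation $-D_xL^{-1}F=\int_0^\infty e^{-t}\,\mbb{E}\big[D_xF(\eta^{(t)})\mid \eta\big]\,dt$ along the Ornstein--Uhlenbeck semigroup (realized by independent thinnings plus independent Poisson resampling), after which Jensen's inequality bounds $\mbb{E}|D_xL^{-1}F|^p$ and $\mbb{E}|D_{x_1,x_2}L^{-1}F|^p$ by the corresponding moments of $D_xF$ and $D_{x_1,x_2}F$. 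This is the central innovation of \cite{last2016normal} and the step at which your proposal, as written, would stall; the rest of the bookkeeping (repeated Cauchy--Schwarz and Mecke across $(x_1,x_2,x_3)$) is routine once that representation is in hand. Within the present paper you are of course entitled to cite the result exactly as the authors do.
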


\begin{proof}[Proof of Theorem \ref{general}]
The idea is to use the above theorem, and to bound the first and second order cost functions appearing in the quantities $\gamma_i, i = 1,\ldots,6$ by using our flexible approach. To this end, we rewrite the first and second order cost functions as:
\begin{align*}
    D_{x}F_{s}=(D_{x}F_{s}-D_{x}F_{s}(A_{x}))+D_{x}F_{s}(A_{x}),
\end{align*}
and
\begin{align*}
 \resizebox{0.99\hsize}{!}{$
    D_{x_{1},x_{3}}F_{s}=(D_{x_{3}}F_{s}^{x_{1}}-D_{x_{3}}F_{s}^{x_{1}}(A_{x_{3}}))+(D_{x_{3}}F_{s}^{x_{1}}(A_{x_{3}})-D_{x_{3}}F_{s}(A_{x_{3}}))+(D_{x_{3}}F_{s}(A_{x_{3}})-D_{x_{3}}F_{s}).$}
\end{align*}
 We start with $\gamma_3$. By the fact that $(a+b)^{3}\le 4(a^{3}+b^{3})$ for any $a\ge 0,b\ge 0$, we have
\begin{align*}
(\Var F)^{\frac{3}{2}}\gamma_{3}&=\int \mathbb{E}|(D_{x}F-D_{x}F(A_{x}))+D_{x}F(A_{x})|^{3}\lambda (dx)\\
&\le \int 4\mathbb{E}|D_{x}F-D_{x}F(A_{x})|^{3}\lambda (dx)+\int 4\mathbb{E}|D_{x}F(A_{x})|^{3}\lambda (dx).
\end{align*}
By H\"{o}lder's inequality and the assumptions in Theorem \ref{general}, we then have 
\begin{align}\label{3F1}
\mathbb{E}|D_{x}F(A_{x})|^{3}\le (\mathbb{E}|D_{x}F(A_{x})|^{4})^{\frac{3}{4}}1^{1-\frac{3}{4}}\le b_{2}(x,A_{x})^{\frac{3}{4}}.
\end{align}
Similarly,
\begin{align}\label{3F2}
\mathbb{E}|D_{x}F-D_{x}F(A_{x})|^{3}\le b_{1}(x,A_{x})^{\frac{3}{4}}.
\end{align}
Therefore,
\begin{equation*}
\begin{aligned}
(\Var F)^{\frac{3}{2}}\gamma_{3}\le C\int \sum_{j=1}^{2}b_{j}(x,A_{x})^{\frac{3}{4}}\lambda(dx)\coloneqq C (\Var F)^{\frac{3}{2}}\gamma'_{3}.
\end{aligned}
\end{equation*}

Next, we turn to $\gamma_4$. According to \cite[Lemma 4.3]{last2016normal}, we have 
\begin{align*}
\mathbb{E}(F-\mathbb{E}F)^{4}\le \max\left\{256\left(\int (\mathbb{E}(D_{x}F)^{4})^{\frac{1}{2}}\lambda (dx)\right)^{2},4\int \mathbb{E}(D_{x}F)^{4}\lambda (dx)+2(\Var F)^{2}\right\}.
\end{align*}
Consequently,
\begin{align*}
\resizebox{0.98\hsize}{!}{$
\gamma_{4}\le \frac{\int(\mathbb{E}(D_{x}F)^{4})^{\frac{3}{4}}\lambda (dx)}{2(\Var F)^{2}}\left(4\left(\int (\mathbb{E}(D_{x}F)^{4}\right)^{\frac{1}{2}}\lambda (dx))^{\frac{1}{2}}+\sqrt{2}\left(\int \mathbb{E}(D_{x}F\right)^{4}\lambda (dx))^{\frac{1}{4}}+2^{\frac{1}{4}}(\Var F)^{\frac{1}{2}}\right).
$}
\end{align*}
By calculations similar to $\gamma_{3}$, we have 
\begin{align*}
\int(\mathbb{E}(D_{x}F)^{4})^{\frac{1}{2}}\lambda (dx)\le C\int \sum_{j=1}^{2}b_{j}(x,A_{x})^{\frac{1}{2}}\lambda(dx).
\end{align*}
Similarly,
\begin{align*}
\int (\mathbb{E}(D_{x}F)^{4})^{\frac{3}{4}}\lambda (dx)&\le C\int \sum_{j=1}^{2}b_{j}(x,A_{x})^{\frac{3}{4}}\lambda(dx),\\
\int \mathbb{E}(D_{x}F_{s})^{4}\lambda (dx)&\le C\int \sum_{j=1}^{2}b_{j}(x,A_{x})\lambda(dx).
\end{align*}
Combining all above, we have
\begin{align*}
    \gamma_{4}\le \frac{C\int \sum_{j=1}^{2}b_{j}(x,A_{x})^{\frac{3}{4}}\lambda (dx)}{(\Var F)^{2}}\bigg(\bigg(&\int \sum_{j=1}^{2}b_{j}(x,A_{x})^{\frac{1}{2}}\lambda (dx)\bigg)^{\frac{1}{2}}\\&+\bigg(\int \sum_{j=1}^{2}b_{j}(x,A_{x})\lambda (dx)\bigg)^{\frac{1}{4}}+(\Var F)^{\frac{1}{2}}\bigg):=C \gamma_{4}'.
\end{align*}
Furthermore, by similar arguments on bounding $\gamma_{3}$, it leads to  
\begin{equation*}
\gamma_{5}\le \frac{C}{\Var F}\left(\int \sum_{j=1}^{2}b_{j}(x,A_{x})\lambda(dx)\right)^{\frac{1}{2}}:=C\gamma_{5}'.
\end{equation*}

We next move on to bounding the remaining part: $\gamma_{2}$, $\gamma_{2}$ and $\gamma_{6}$, that involve the second order cost functions. Similarly to the first order cost function, we have
\begin{align}\label{3S}
\mathbb{E}(D_{x_{1},x_{2}}F)^{4}\le C \sum_{j=3}^{5}b_{j}(x_{1},x_{2},A_{x_{1}}).
\end{align}
Using Cauchy-Schwarz inequality, we have
\begin{align}\label{CS1}
(\mathbb{E}(D_{x_{1}}F)^{2}(D_{x_{2}}F)^{2})^{\frac{1}{2}}&\le \mathbb{E}(D_{x_{1}}F)^{4})^{\frac{1}{4}}\mathbb{E}(D_{x_{2}}F)^{4})^{\frac{1}{4}},\\
(\mathbb{E}(D_{x_{1},x_{3}}F)^{2}(D_{x_{2},x_{3}}F)^{2})^{\frac{1}{2}}&\le \mathbb{E}(D_{x_{1},x_{3}}F)^{4})^{\frac{1}{4}}\mathbb{E}(D_{x_{2},x_{3}}F)^{4})^{\frac{1}{4}}\label{CS2}.
\end{align}
With all results above, according to \eqref{3F1} to \eqref{CS2}, we give an upper bound for $\gamma_{1}$ in a similar way by H\"{o}lder's inequality:
\begin{align*}
\gamma_{1}&\le \frac{C}{\Var F} \bigg(\int \bigg(\sum_{j=1}^{2}b_{j}(x_{1},A_{x_{1}})^{\frac{1}{4}}\sum_{j=1}^{2}b_{j}(x_{2},A_{x_{2}})^{\frac{1}{4}}\\  &\qquad\qquad\qquad\qquad \sum_{j=3}^{5}b_{j}(x_{3},x_{1},A_{x_{3}})^{\frac{1}{4}}\sum_{j=3}^{5}b_{j}(x_{3},x_{2},A_{x_{3}})^{\frac{1}{4}}\bigg)\lambda^{3}(d(x_{1},x_{2},x_{3}))\bigg)^{\frac{1}{2}}\\&:=C\gamma_{1}'.
\end{align*}
Similarly, we have 
\begin{align*}
    \gamma_{2}&\le \frac{C}{\Var F}\left(\int  \sum_{j=3}^{5}b_{j}(x_{3},x_{1},A_{x_{3}})\sum_{j=3}^{5}b_{j}(x_{3},x_{2},A_{x_{3}})\lambda^{3}(d(x_{1},x_{2},x_{3}))\right)^{\frac{1}{2}}\\&:=C\gamma_{2}',\\
    \gamma_{6}&\le \frac{C}{\Var F}\bigg(\int \sum_{j=1}^{2}b_{j}(x_{1},A_{x_{1}})^{\frac{1}{2}}\sum_{j=3}^{5}b_{j}(x_{1},x_{2},A_{x_{1}})^{\frac{1}{2}}+\sum_{j=3}^{5}b_{j}(x_{1},x_{2},A_{x_{1}})\lambda^{2}(d(x_{1},x_{2}))\bigg)^{\frac{1}{2}}\\&:=C\gamma_{6}'.
\end{align*}
Combining the obtained bounds above for $\gamma_{i}$, $1\le i\le 6$, we complete the proof.
\end{proof}

\subsection{Proof of Corollary \ref{MTPoi}}
Without loss of generality, we assume $c_{1}=k_{1}:=C_{1}, c_{2}=k_{2}:=C_{2}$ and $c_{3}=k_{3}:=C_{3}$. In Theorem \ref{general}, we set $A_{x}=\mbb{X}$ and $\lambda=s\mbb{Q}$. Then, we can set
\begin{align*}
    b_{1}(x,A_{x})&=0,\\
    b_{3}(x_{1},x_{2},A_{x_{1}})&=0,\\
    b_{4}(x_{1},x_{12},A_{x_{1}})&=0.
\end{align*}
 According to Assumption \ref{fmc}, by H\"{o}lder's inequality, we have:
\begin{align*}
    b_{2}(x,\mbb{X}):=\mathbb{E}|D_{x}F_{s}|^{4}\le \mathbb{E}|D_{x}F_{s}|^{p}\mbb{P}(D_{x}F_{s}\neq 0)^{1-\frac{4}{p}}\le C\mbb{P}(D_{x}F_{s}\neq 0)^{1-\frac{4}{p}},
\end{align*}
and
\begin{align*}
    b_{5}(x_{1},x_{2},\mbb{X}):=\mbb{E}|D_{x_{1}}F_{s}^{x_{2}}-D_{x_{1}}F_{s}|^{4}&\le \mbb{E}|D_{x_{1}}F_{s}^{x_{2}}-D_{x_{1}}F_{s}|^{p}\mbb{P}(|D_{x_{1}}F_{s}^{x_{2}}-D_{x_{1}}F_{s}|\neq 0)^{1-\frac{4}{p}}\\&\le C\mbb{P}(|D_{x_{1}}F_{s}^{x_{2}}-D_{x_{1}}F_{s}|\neq 0)^{1-\frac{4}{p}}.
\end{align*}

Define
	\begin{align*}
	\phi_{s}&\coloneqq s\int_{\mbb{X}}\mathbb{P}(D_{x}F_{s}\neq 0)^{\frac{p-4}{2p}}\mbb{Q}(dx),\\
	\psi_{s}(x_{1},x_{2})&\coloneqq \mathbb{P}(|D_{x_{1}}F_{s}^{x_{2}}-D_{x_{1}}F_{s}|\neq 0)^{\frac{p-4}{4p}}.
	\end{align*}
    With all $b_{i}$, $1\le i\le 5$, given above, we will bound all $\gamma_{i}'$, $1\le i\le 6$ in Theorem \ref{general}. We again start with $\gamma_{3}'$:
    \begin{align*}
    \gamma_{3}'&=\frac{1}{(\Var F_{s}^{\frac{3}{2}}}\int \sum_{j=1}^{2}b_{j}(x,A_{x})^{\frac{3}{4}}\lambda (dx)\\
    &=Cs\int_{\mbb{X}}\mathbb{P}(D_{x}F_{s}\neq 0)^{(1-\frac{4}{p})\frac{3}{4}}\mbb{Q}(dx)\\&\le C\phi_{s}.
    \end{align*}
    Similarly, as for $\gamma_{3}'$, it holds that
    \begin{align*}
        \gamma_{5}'&=\frac{1}{\Var F_{s}}\left(\int \sum_{j=1}^{2}b_{j}(x,A_{x})\lambda(dx)\right)^{\frac{1}{2}}\\&=\frac{1}{\Var F_{s}}\left(s\int_{\mbb{X}}\mathbb{P}(D_{x}F_{s}\neq 0)^{1-\frac{4}{p}}\mbb{Q}(dx)\right)^{\frac{1}{2}}\\&\le C\phi_{s}^{\frac{1}{2}}.
    \end{align*}
    For $\gamma_{4}'$, we have
    \begin{align}\nonumber
        \gamma_{4}'=\frac{\int \sum_{j=1}^{2}b_{j}(x,A_{x})^{\frac{3}{4}}\lambda (dx)}{(\Var F_{s})^{2}}\bigg(\bigg(&\int \sum_{j=1}^{2}b_{j}(x,A_{x})^{\frac{1}{2}}\lambda (dx)\bigg)^{\frac{1}{2}}\\&+\bigg(\int \sum_{j=1}^{2}b_{j}(x,A_{x})\lambda (dx)\bigg)^{\frac{1}{4}}+(\Var F_{s})^{\frac{1}{2}}\bigg)\label{eq:tempgamma4}.
    \end{align}
     Respectively,
     \begin{align*}
         \bigg(\int \sum_{j=1}^{2}b_{j}(x,A_{x})^{\frac{1}{2}}\lambda (dx)\bigg)^{\frac{1}{2}}&=\left(s\int_{\mbb{X}}\mathbb{P}(D_{x}F_{s}\neq 0)^{(1-\frac{4}{p})\frac{1}{2}}\mbb{Q}(dx)\right)^{\frac{1}{2}}\\&\le C\phi_{s}^{\frac{1}{2}},
     \end{align*}
    \begin{align*}
        \bigg(\int \sum_{j=1}^{2}b_{j}(x,A_{x})\lambda (dx)\bigg)^{\frac{1}{4}}&=\left(s\int_{\mbb{X}}\mathbb{P}(D_{x}F_{s}\neq 0)^{1-\frac{4}{p}}\mbb{Q}(dx)\right)^{\frac{1}{4}}\\&\le C\phi_{s}^{\frac{1}{4}},
    \end{align*}
    and
    \begin{align*}
        \int \sum_{j=1}^{2}b_{j}(x,A_{x})^{\frac{3}{4}}\lambda (dx)&=s\int_{\mbb{X}}\mathbb{P}(D_{x}F_{s}\neq 0)^{(1-\frac{4}{p})\frac{3}{4}}\mbb{Q}(dx)\\&\le C\phi_{s}.
    \end{align*}
From the above calculations, we see that the exponent $(p-4)/2p$ is set so that indeed $\phi_s$ provides an upper bound for all the terms appearing in the right hand of~\eqref{eq:tempgamma4}. Hence, we have
\begin{align*}
\gamma_{4}'\le C\left(\frac{\phi_{s}^{\frac{3}{2}}+\phi_{s}^{\frac{5}{4}}}{(\Var F_{s})^{2}}+\frac{\phi_{s}}{(\Var F_{s})^{\frac{3}{2}}}\right).
\end{align*}
For terms that include $b_{j}$, $3\le j\le 5$, we have 
\begin{align*}
\gamma_{1}'&=\frac{1}{\Var F_{s}} \bigg(\int \bigg(\sum_{j=1}^{2}b_{j}(x_{1},A_{x_{1}})^{\frac{1}{4}}\sum_{j=1}^{2}b_{j}(x_{2},A_{x_{2}})^{\frac{1}{4}}\\  &\qquad\qquad\qquad\qquad \sum_{j=3}^{5}b_{j}(x_{3},x_{1},A_{x_{3}})^{\frac{1}{4}}\sum_{j=3}^{5}b_{j}(x_{3},x_{2},A_{x_{3}})^{\frac{1}{4}}\bigg)\lambda^{3}(d(x_{1},x_{2},x_{3}))\bigg)^{\frac{1}{2}}\\&\le C\frac{s^{\frac{3}{2}}}{\Var F_{s}}\Bigg(\int_{\mbb{X}^{3}}\Big(\mathbb{P}(|D_{x_{1}}F_{s}^{x_{2}}-D_{x_{1}}F_{s}|\neq 0)^{\frac{p-4}{4p}}\\&\qquad \qquad\qquad \qquad\mathbb{P}(|D_{x_{2}}F_{s}^{x_{3}}-D_{x_{2}}F_{s}|\neq 0)^{\frac{p-4}{4p}}\Big)\mbb{Q}^{3}(d(x_{1},x_{2},x_{3}))\Bigg)^{\frac{1}{2}}\\&=C\frac{s^{\frac{3}{2}}}{\Var F_{s}}\sqrt{\int_{\mbb{X}}\left(\int_{\mbb{X}}\psi_{s}(x_{1},x_{2})\mbb{Q}(dx_{2})\right)^{2}\mbb{Q}(dx_{1})}.
\end{align*}
Similarly,
\begin{align*}
    \gamma_{2}'&=\frac{C}{\Var F_{s}}\left(\int  \sum_{j=3}^{5}b_{j}(x_{3},x_{1},A_{x_{3}})\sum_{j=3}^{5}b_{j}(x_{3},x_{2},A_{x_{3}})\lambda^{3}(d(x_{1},x_{2},x_{3}))\right)^{\frac{1}{2}}\\& \le C\frac{s^{\frac{3}{2}}}{\Var F_{s}}\Bigg(\int_{\mbb{X}^{3}}\Big(\mathbb{P}(|D_{x_{1}}F_{s}^{x_{2}}-D_{x_{1}}F_{s}|\neq 0)^{\frac{p-4}{p}}\\&\qquad \qquad\qquad \qquad\mathbb{P}(|D_{x_{2}}F_{s}^{x_{3}}-D_{x_{2}}F_{s}|\neq 0)^{\frac{p-4}{p}}\Big)\mbb{Q}^{3}(d(x_{1},x_{2},x_{3}))\Bigg)^{\frac{1}{2}}\\&= C\frac{s^{\frac{3}{2}}}{\Var F_{s}}\sqrt{\int_{\mbb{X}}\left(\int_{\mbb{X}}\psi_{s}(x_{1},x_{2})\mbb{Q}(dx_{2})\right)^{2}\mbb{Q}(dx_{1})},
\end{align*}
and
\begin{align*}
    \gamma_{6}'&=\frac{1}{\Var F_{s}}\bigg(\int \sum_{j=1}^{2}b_{j}(x_{1},A_{x_{1}})^{\frac{1}{2}}\sum_{j=3}^{5}b_{j}(x_{1},x_{2},A_{x_{1}})^{\frac{1}{2}}+\sum_{j=3}^{5}b_{j}(x_{1},x_{2},A_{x_{1}})\lambda^{2}(d(x_{1},x_{2}))\bigg)^{\frac{1}{2}}\\&\le C\frac{s}{\Var F_{s}}\Big(\int_{\mbb{X}^{2}}\mathbb{P}(|D_{x_{2}}F_{s}^{x_{3}}-D_{x_{2}}F_{s}|\neq 0)^{\frac{p-4}{2p}}\\&\qquad \qquad\qquad+\mathbb{P}(|D_{x_{2}}F_{s}^{x_{3}}-D_{x_{2}}F_{s}|\neq 0)^{\frac{p-4}{p}}\mbb{Q}^{2}(d(x_{1},x_{2}))\Big)^{\frac{1}{2}}\\&\le C\frac{s}{\Var F_{s}}\left(\int_{\mbb{X}^{2}}\mathbb{P}(|D_{x_{2}}F_{s}^{x_{3}}-D_{x_{2}}F_{s}|\neq 0)^{\frac{p-4}{2p}}\mbb{Q}^{2}(d(x_{1},x_{2}))\right)^{\frac{1}{2}}\\&\le C\frac{s}{\Var F_{s}}\sqrt{\int_{\mbb{X}^{2}}\psi_{s}(x_{1},x_{2})^{2}\mbb{Q}^{2}(d(x_{1},x_{2}))}.
\end{align*}
Therefore, combining all bounds for $\gamma_{i}'$, $1\le i\le 6$, we have by Theorem \ref{general},
\begin{equation}\label{missingthm}
	d_{K}\left(\frac{F_{s}-\mbb{E}F_{s}}{\sqrt{\Var F_{s}}},N\right)\le C(\theta_{1}+\theta_{2}+\theta_{3}),
 \end{equation}
 where 
	\begin{align}
	\theta_{1}&\coloneqq \frac{s}{\Var F_{s}}\sqrt{\int_{\mbb{X}^{2}}\psi_{s}(x_{1},x_{2})^{2}\mbb{Q}^{2}(d(x_{1},x_{2}))} \label{theta1},\\
    \theta_{2}&\coloneqq\frac{s^{\frac{3}{2}}}{\Var F_{s}}\sqrt{\int_{\mbb{X}}\left(\int_{\mbb{X}}\psi_{s}(x_{1},x_{2})\mbb{Q}(dx_{2})\right)^{2}\mbb{Q}(dx_{1})},\label{theta2}\\
	\theta_{3}&\coloneqq\frac{(\phi_{s})^{\frac{1}{2}}}{\Var F_{s}}+\frac{\phi_{s}}{(\Var F_{s})^{\frac{3}{2}}}+\frac{\phi_{s}^{\frac{5}{4}}+\phi_{s}^{\frac{3}{2}}}{(\Var F_{s})^{2}}.\label{theta3}
	\end{align}

We next proceed to obtain refined bounds for $\phi_s$ and $\psi_{s}(x_{1},x_{2})$. Before proceeding, we recall the following definitions from ~\eqref{IJ1} to \eqref{IJ3}:
\begin{align*}
I_{s}(x)&\coloneqq \mathbb{P}(D_{x}F_{s}\neq 0),\\
J_{s}(x_{1},x_{2})&\coloneqq\mathbb{P}(|D_{x_{1}}F_{s}-D_{x}F_{s}^{x_{2}}|\neq 0).
\end{align*}

\begin{Lemma}\label{lemma61}
	Assume that all conditions in Corollary \ref{MTPoi} hold, and recall that $R_{x}$ denotes the radius of stabilization. Then,
	\begin{align*}
	\phi_{s} \le Cs\int_{\mbb{X}}e^{-C_{2}\tfrac{p-4}{2p}d_{s}(x,\mbb{K})^{C_{3}}}\mbb{Q}(dx),
    \end{align*}
    and
    \begin{align*}
    \psi_{s}(x_{1},x_{2})\le Ce^{-C_{2}\frac{p-4}{4p}d_{s}(x_{1},x_{2})^{C_{3}}}.
	\end{align*}
\end{Lemma}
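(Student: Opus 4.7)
The plan is to treat the two bounds separately: the estimate on $\phi_s$ will be an immediate consequence of Assumption~\ref{kexp}, while the estimate on $\psi_s$ will require combining strong stabilization with the tail decay of the radius $R_x$ from Assumption~\ref{decay}. Throughout, $C$ denotes a generic positive constant whose value may change from line to line, and $(C_1,C_2,C_3)$ will be identified with the constants $(k_1,k_2,k_3)$ and $(c_1,c_2,c_3)$ from Assumptions~\ref{kexp} and~\ref{decay}, which are assumed equal without loss of generality.

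For the bound on $\phi_s$, I would start from the $\mathbb{K}$-exponential bound $\mathbb{P}(D_xF_s \neq 0) \leq C_1 e^{-C_2 d_s(x,\mathbb{K})^{C_3}}$ given by Assumption~\ref{kexp}. Raising both sides to the exponent $\frac{p-4}{2p} \in (0,\tfrac12)$ and absorbing $C_1^{(p-4)/(2p)}$ into a new constant yields
\[
\mathbb{P}(D_xF_s \neq 0)^{\frac{p-4}{2p}} \leq C\, e^{-C_2 \frac{p-4}{2p} d_s(x,\mathbb{K})^{C_3}}.
\]
Multiplying by $s$ and integrating against $\mathbb{Q}$ then gives the advertised bound on $\phi_s$.

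For the bound on $\psi_s$, the key step is a deterministic coupling argument: I claim that whenever $d(x_1,x_2) > R_{x_1}$, the quantities $D_{x_1}F_s$ and $D_{x_1}F_s^{x_2}$ coincide almost surely. To establish this, I would apply Definition~\ref{strongs} first with $\mathbb{A} := \mathcal{P}_s \setminus B_{x_1}(R_{x_1})$ (truncating to a finite window if $\mathbb{X}$ is unbounded, as standard) to obtain
\[
D_{x_1}F_s(\mathcal{P}_s) = D_{x_1}F_s(\mathcal{P}_s \cap B_{x_1}(R_{x_1})),
\]
and then a second time with $\mathbb{A} := (\mathcal{P}_s \setminus B_{x_1}(R_{x_1})) \cup \{x_2\}$, which is admissible because $x_2 \notin B_{x_1}(R_{x_1})$ by hypothesis, to obtain
\[
D_{x_1}F_s(\mathcal{P}_s \cup \{x_2\}) = D_{x_1}F_s(\mathcal{P}_s \cap B_{x_1}(R_{x_1})).
\]
Therefore $\{|D_{x_1}F_s - D_{x_1}F_s^{x_2}| \neq 0\} \subseteq \{R_{x_1} \geq d(x_1,x_2)\}$, and Assumption~\ref{decay} gives $\mathbb{P}(R_{x_1} \geq d(x_1,x_2)) \leq C_1 e^{-C_2 d_s(x_1,x_2)^{C_3}}$. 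Raising to the power $\frac{p-4}{4p}$ produces the claimed bound on $\psi_s(x_1,x_2)$.

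The only nontrivial point is the coupling step in the second paragraph: one must verify that the \emph{same} radius $R_{x_1}$ (which depends on $\mathcal{P}_s$ but not on $x_2$) governs both the configuration $\mathcal{P}_s$ and the augmented configuration $\mathcal{P}_s \cup \{x_2\}$. This is precisely what the formulation of Definition~\ref{strongs} buys us, since the quantifier ``for all finite $\mathbb{A} \subset \mathbb{X} \setminus B_{x_1}(R_{x_1})$'' is applied to the stabilization ball determined solely by $\mathcal{P}_s$. Once this point is secured, everything else is a routine application of the two tail bounds and H\"older-type manipulation of exponents.
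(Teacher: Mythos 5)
Your proposal is correct and follows essentially the same route as the paper: the $\phi_s$ bound is read off directly from Assumption~\ref{kexp}, and the $\psi_s$ bound comes from the inclusion $\{D_{x_1}F_s \neq D_{x_1}F_s^{x_2}\} \subseteq \{R_{x_1} \geq d(x_1,x_2)\}$ combined with Assumption~\ref{decay}. The only difference is that you spell out the stabilization coupling (applying Definition~\ref{strongs} with the two choices of $\mathbb{A}$) that the paper's proof treats as immediate, which is a welcome clarification rather than a deviation.
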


\begin{proof}[Proof of Lemma~\ref{lemma61}]
    Based on the Assumptions \ref{decay} and \ref{kexp}, we immediately obtain
	\begin{align*}
	I_{s}(x)&\le C_{1}e^{-C_{2}d_{s}(x,\mbb{K})^{C_{3}}},\\ 
	J_{s}(x_{1},x_{2})&\le C_{1}e^{-C_{2}\max\{d_{s}(x_{1},x_{2}),d_{s}(x_{1},\mbb{K}),d_{s}(x_{2},\mbb{K})\}^{C_{3}}}.
	\end{align*}
Therefore, we have
	\begin{align*}
	\phi_{s}& \le Cs\int_{\mbb{X}}e^{-C_{2}\frac{p-4}{2p}d_{s}(x,\mbb{K})^{C_{3}}}\mbb{Q}(dx)
	\end{align*}
Similarly, we obtain the stated upper bound for $\psi_{s}(x_{1},x_{2})$.
\end{proof}

\begin{Lemma}\label{lemma62}
	Suppose the condition \eqref{assQ} holds. Then, for any $x\in\mbb{X}$ and $r\ge 0$,  we have
$$\mbb{Q}(B_{x}(r))\le \kappa r^{\omega}.$$	
\end{Lemma}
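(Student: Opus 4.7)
The plan is to set $f(r) := \mbb{Q}(B_x(r))$ and verify three properties: (i) $f(0) = \mbb{Q}(\{x\}) = 0$ by the second part of~\eqref{assQ}; (ii) $f$ is non-decreasing in $r$ because $B_x(r) \subseteq B_x(r')$ for $r \le r'$; and (iii) $f$ is right-continuous, since the closed balls satisfy $B_x(r) = \bigcap_{\epsilon > 0} B_x(r+\epsilon)$, so by continuity of $\mbb{Q}$ from above along the decreasing family (finite for small enough $\epsilon$ by~\eqref{assQ}), $\lim_{\epsilon\to 0^+} f(r+\epsilon) = f(r)$. The first clause of~\eqref{assQ} then reads exactly as an upper bound on the upper right Dini derivative,
\begin{equation*}
D^{+} f(r) := \limsup_{\epsilon \to 0^+} \frac{f(r+\epsilon) - f(r)}{\epsilon} \le \kappa \omega r^{\omega - 1}, \qquad r \ge 0.
\end{equation*}

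Next, I would introduce the comparison function $g(r) := \kappa r^{\omega}$, which satisfies $g(0) = 0$ and is continuously differentiable with $g'(r) = \kappa \omega r^{\omega - 1}$, matching the Dini bound on $f$. The remaining task is the Dini/Gronwall comparison step: from $f(0) = g(0)$ and $D^{+} f \le g'$ together with the right-continuity of $f$, deduce $f(r) \le g(r)$ for all $r \ge 0$. For any $\delta > 0$, I would define $h_\delta(r) := f(r) - g(r) - \delta r$. Then $h_\delta(0) = 0$, $h_\delta$ is right-continuous, and basic properties of the $\limsup$ together with differentiability of $g$ give
\begin{equation*}
D^{+} h_\delta(r) \le D^{+} f(r) - g'(r) - \delta \le -\delta < 0
\end{equation*}
for every $r \ge 0$.

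To conclude $h_\delta \le 0$ everywhere, argue by contradiction: if $h_\delta(r_0) > 0$ for some $r_0 > 0$, let $r^{*} := \inf\{r \in [0, r_0] : h_\delta(r) > 0\}$. By right-continuity and since $h_\delta(0) = 0$, one obtains $h_\delta(r^{*}) \le 0$. But $D^{+} h_\delta(r^{*}) \le -\delta < 0$ forces $h_\delta(r^{*} + \eta) < h_\delta(r^{*}) \le 0$ for all sufficiently small $\eta > 0$, contradicting the definition of $r^{*}$ as an infimum of points where $h_\delta > 0$. Hence $h_\delta \le 0$ on $[0, \infty)$, i.e., $f(r) \le g(r) + \delta r$ for every $r$, and letting $\delta \downarrow 0$ gives the desired $\mbb{Q}(B_x(r)) = f(r) \le \kappa r^{\omega}$.

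The main obstacle is ensuring enough regularity of $f$ for the Dini comparison to be valid, since a priori the map $r \mapsto \mbb{Q}(B_x(r))$ could have upward left-jumps (spheres of positive $\mbb{Q}$-measure). The right-continuity established in step (iii) is the crucial ingredient that rules out the pathology and lets the above contradiction argument go through; this is really where assumption~\eqref{assQ} combined with the $\sigma$-finiteness of $\mbb{Q}$ is doing the work.
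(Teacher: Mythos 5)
Your overall strategy --- bound the right Dini derivative of $f(r)=\mbb{Q}(B_x(r))$ by $g'(r)$ with $g(r)=\kappa r^{\omega}$ and run a comparison argument from $f(0)=g(0)=0$ --- is the same idea as the paper's proof, which treats $Q(r)=\mbb{Q}(B_x(r))$ as a non-decreasing function and writes $Q(r)-Q(0)\le\int_0^rQ'(u)\,du\le\int_0^r\kappa\omega u^{\omega-1}\,du=\kappa r^{\omega}$. Your Dini/Gronwall formulation is, if anything, more faithful to what \eqref{assQ} actually supplies (an upper bound on an upper right Dini derivative rather than on an a.e.\ derivative).

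There is, however, a genuine gap at the step ``by right-continuity and since $h_\delta(0)=0$, one obtains $h_\delta(r^*)\le 0$.'' Right-continuity of $f$ controls the limit from the \emph{right} at $r^*$ and says nothing about the limit from the left: a non-decreasing, right-continuous function (a distribution function, say) can perfectly well satisfy $\lim_{r\uparrow r^*}f(r)<f(r^*)$. In that case $h_\delta(r)\le 0$ for all $r<r^*$ is compatible with $h_\delta(r^*)>0$, and the contradiction does not close. What is needed at $r^*$ is left upper semicontinuity of $f$, i.e.\ that the sphere $\{y:d(x,y)=r^*\}$ is $\mbb{Q}$-null --- exactly the pathology you flag in your final paragraph, but which right-continuity does \emph{not} rule out. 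Nor does \eqref{assQ}, which constrains only right difference quotients: for instance, take $\mbb{X}=\{0\}\cup S$ with $S$ the unit sphere in $\mbb{R}^{3}$ and $\mbb{Q}$ the surface measure on $S$; then \eqref{assQ} holds with $\omega=2$, $\kappa=\pi$ (for $x\in S$ one has $\mbb{Q}(B_x(r))=\pi r^2$, and for $x=0$ all right difference quotients vanish), yet $\mbb{Q}(B_0(1))=4\pi>\pi=\kappa\cdot 1^{\omega}$. So the missing ingredient is the additional regularity that $r\mapsto\mbb{Q}(B_x(r))$ has no upward left jumps (true in all of the paper's applications, where $\mbb{Q}$ has a density); granting that, $h_\delta(r^*)\le\limsup_{r\uparrow r^*}h_\delta(r)\le 0$ and your argument goes through. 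For what it is worth, the paper's own proof rests on the same implicit assumption: for a general non-decreasing $Q$, Lebesgue's differentiation theorem gives $\int_0^rQ'(u)\,du\le Q(r)-Q(0)$, the \emph{reverse} of the inequality invoked, with equality only under absolute continuity.
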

\begin{proof}[Proof of Lemma~\ref{lemma62}]
	For any $x\in\mbb{X}$ fixed, consider $Q(r):=\mbb{Q}(B_{x}(r)),r\ge 0$ as an increasing function of $r$. According to Lebesgue's theorem for the differentiability of monotone functions, the derivative $Q'(r)$ exists almost everywhere and then with the condition \eqref{assQ},
	\begin{equation*}
	\mbb{Q}(B_{x}(r))-\mbb{Q}(B_{x}(0))\le \int_{0}^{r}Q'(u)du\le \int_{0}^{r}\kappa \omega u^{\omega-1}du=\kappa r^{\omega}.
	\end{equation*}
	Note that $\mbb{Q}(B_{x}(0))=0$. Therefore, we obtain the desired result.
\end{proof}

\begin{Lemma}
	Suppose the condition \eqref{assQ} holds. For any $x\in\mbb{X}$, $r\ge 0$ and $\alpha>0$, there exists a constant $C>0$ such that
	\begin{equation*}
	\int_{\mbb{X}\backslash B_{x}(r)}e^{-\alpha d_{s}(x,y)^{C_{3}}}\mbb{Q}(dy)\le \frac{C}{s}e^{-\frac{1}{2}\alpha(s^{1/\omega}r)^{C_{3}}}.
	\end{equation*}
	\label{lemma63}
\end{Lemma}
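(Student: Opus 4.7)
The plan is to reduce the integral to a one-dimensional tail integral by passing to the push-forward of $\mbb{Q}$ under the map $y\mapsto d(x,y)$, and then to control that tail using Lemma~\ref{lemma62}. Let $F(u):=\mbb{Q}(B_{x}(u))$ and denote by $\mu$ the image of $\mbb{Q}$ on $[0,\infty)$ under $y\mapsto d(x,y)$; then $\mu([0,u])=F(u)$, and Lemma~\ref{lemma62} gives $F(u)\le \kappa u^{\omega}$ for every $u\ge 0$. With this notation, the integral in question becomes
\begin{equation*}
\int_{\mbb{X}\backslash B_{x}(r)}e^{-\alpha d_{s}(x,y)^{C_{3}}}\mbb{Q}(dy)
=\int_{r}^{\infty}e^{-\alpha (s^{1/\omega}u)^{C_{3}}}\,dF(u).
\end{equation*}

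Next, I will apply integration by parts. The boundary contribution at $u=r$ equals $-e^{-\alpha(s^{1/\omega}r)^{C_{3}}}F(r)\le 0$ and can be dropped; the boundary contribution at $u=\infty$ vanishes since $F(u)\le \kappa u^{\omega}$ is dominated by the exponential factor $e^{-\alpha(s^{1/\omega}u)^{C_{3}}}$. This yields
\begin{equation*}
\int_{r}^{\infty}e^{-\alpha (s^{1/\omega}u)^{C_{3}}}\,dF(u)
\le \int_{r}^{\infty}F(u)\,\alpha C_{3}\, s^{C_{3}/\omega}\, u^{C_{3}-1}\, e^{-\alpha(s^{1/\omega}u)^{C_{3}}}\,du.
\end{equation*}
Inserting the bound $F(u)\le \kappa u^{\omega}$ and performing the change of variables $v=s^{1/\omega}u$, a direct bookkeeping of the powers of $s$ gives an overall prefactor $s^{-1}$, reducing the estimate to
\begin{equation*}
\int_{r}^{\infty}e^{-\alpha (s^{1/\omega}u)^{C_{3}}}\,dF(u)
\le \frac{\kappa\, \alpha C_{3}}{s}\int_{s^{1/\omega}r}^{\infty} v^{\omega+C_{3}-1}\, e^{-\alpha v^{C_{3}}}\,dv.
\end{equation*}

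Finally, I will split the exponential as $e^{-\alpha v^{C_{3}}}=e^{-\tfrac{\alpha}{2}v^{C_{3}}}\cdot e^{-\tfrac{\alpha}{2}v^{C_{3}}}$. For $v\ge s^{1/\omega}r$ the first factor is at most $e^{-\tfrac{\alpha}{2}(s^{1/\omega}r)^{C_{3}}}$, and the remaining integral $\int_{0}^{\infty}v^{\omega+C_{3}-1}e^{-\tfrac{\alpha}{2}v^{C_{3}}}\,dv$ is a finite Gamma-type integral depending only on $\alpha,\omega,C_{3}$. Absorbing all constants into $C$ yields the claimed bound.

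The only non-routine point is the vanishing of the boundary term at infinity in the integration by parts; this is where Lemma~\ref{lemma62} is used in an essential way, since it guarantees polynomial growth of $F(u)$ that is comfortably dominated by $e^{-\alpha(s^{1/\omega}u)^{C_{3}}}$. Everything else amounts to a change of variables and the standard tail bound for Gamma-type integrals.
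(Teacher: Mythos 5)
Your proof is correct, but it takes a genuinely different route from the paper's. The paper discretizes $\mbb{X}\backslash B_{x}(r)$ into thin annuli $B_{x}(r_{n})\backslash B_{x}(r_{n-1})$ and uses the infinitesimal growth condition \eqref{assQ} directly to replace $\mbb{Q}(dy)$ by the radial density bound $\kappa\omega u^{\omega-1}\,du$, arriving at $\int_{r}^{\infty}e^{-\alpha(s^{1/\omega}u)^{C_{3}}}\kappa\omega u^{\omega-1}\,du$; it then proves the required bound by showing that the auxiliary function $\zeta(s,r)=se^{\frac{\alpha}{2}(s^{1/\omega}r)^{C_{3}}}\int_{r}^{\infty}e^{-\alpha(s^{1/\omega}u)^{C_{3}}}u^{\omega-1}\,du$ is bounded on $\{s\ge 1,\,r\ge 0\}$ via a continuity-and-boundary-limits argument. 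You instead integrate by parts against the radial distribution function $F(u)=\mbb{Q}(B_{x}(u))$, which lets you invoke only the \emph{integrated} volume bound $F(u)\le\kappa u^{\omega}$ of Lemma~\ref{lemma62} rather than the derivative bound, and then a single change of variables $v=s^{1/\omega}u$ (your power count $s^{C_{3}/\omega}\cdot s^{-(\omega+C_{3}-1)/\omega}\cdot s^{-1/\omega}=s^{-1}$ is right) plus the split $e^{-\alpha v^{C_{3}}}=e^{-\frac{\alpha}{2}v^{C_{3}}}e^{-\frac{\alpha}{2}v^{C_{3}}}$ finishes the job with an explicit Gamma-type constant $\kappa\alpha C_{3}\int_{0}^{\infty}v^{\omega+C_{3}-1}e^{-\frac{\alpha}{2}v^{C_{3}}}\,dv$. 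What your approach buys is a fully explicit constant and the avoidance of both the Riemann-sum passage from $\mbb{Q}(dy)$ to $\kappa\omega u^{\omega-1}\,du$ (which, strictly speaking, needs a limiting argument since \eqref{assQ} only controls infinitesimal increments) and the somewhat indirect boundedness argument for $\zeta(s,r)$; what the paper's version buys is that it stays closer to the form of hypothesis \eqref{assQ} as stated. Both boundary terms in your integration by parts are handled correctly: the one at $u=r$ has a favorable sign, and the one at infinity vanishes because $\kappa M^{\omega}e^{-\alpha(s^{1/\omega}M)^{C_{3}}}\to 0$.
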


\begin{proof}[Proof of Lemma~\ref{lemma63}]
	Let $\{r_{n}\}_{n=1}^{\infty}$ be an increasing sequence satisfying 
	\begin{align*}
	r_{1}:=r, \qquad \underset{n\rightarrow\infty}{\lim}~r_{n}=\infty, \qquad\text{and}\qquad \underset{n\rightarrow\infty}{\lim}~\underset{n\ge 2}{\sup}~|r_{n}-r_{n-1}|=0.
	\end{align*}
	Then, 
	\begin{align*}
	\int_{\mbb{X}\backslash B_{x}(r)}e^{-\alpha d_{s}(x,y)^{C_{3}}}\mbb{Q}(dy)&\le\sum_{n=2}^{\infty}e^{-\alpha (s^{1/\omega }r_{n-1})^{C_{3}}}\mbb{Q}(B_{x}(r_{n})\backslash B_{x}(r_{n-1}))\\&\le \sum_{n=2}^{\infty}e^{-\alpha (s^{1/\omega}r_{n-1})^{C_{3}}}\kappa\omega r_{n-1}^{\omega-1}(r_{n}-r_{n-1})\\&=\int_{r}^{\infty}e^{-\alpha (s^{1/\omega}u)^{C_{3}}}\kappa\omega u^{\omega-1}du.
	\end{align*}
	Therefore, it suffices to show
	\begin{align*}
	\zeta(s,r):=se^{\frac{\alpha}{2} (s^{1/\omega}r)^{C_{3}}}\int_{r}^{\infty}e^{-\alpha (s^{1/\omega}u)^{C_{3}}}u^{\omega-1}du
	\end{align*}
	is bounded on $\Theta:=\{(s,r):s\ge 1,r\ge 0\}$.
	Since $\zeta(s,r)$ is a continuous function, then we only need to show 
	\begin{align*}
	\underset{(s,r)\rightarrow \partial\Theta}{\lim}\zeta(s,r)<\infty.
	\end{align*}
	Note that
	\begin{equation*}
	\zeta(s,r)\le se^{(\frac{\alpha}{2}-\frac{2\alpha}{3}) (s^{1/\omega}r)^{C_{3}}}\int_{r}^{\infty}e^{-\frac{\alpha}{3} (s^{1/\omega}u)^{C_{3}}}u^{\omega-1}du.
	\end{equation*}
	Moreover, let 
	\begin{equation*}
	\int_{r}^{\infty}e^{-\frac{\alpha}{3} (s^{1/\omega}u)^{C_{3}}}u^{\omega-1}du\le \int_{0}^{\infty}e^{-\frac{\alpha}{3} (s^{1/\omega}u)^{C_{3}}}u^{\omega-1}du:=\eta(s),
	\end{equation*}
	with
	\begin{equation*}
	\underset{s\rightarrow\infty}{\lim}\eta(s)<\infty.
	\end{equation*}
	Then,
	\begin{equation*}
	\zeta(s,r)\le se^{(\frac{\alpha}{2}-\frac{2\alpha}{3}) (s^{1/\omega}r)^{C_{3}}}~ \eta(s).
	\end{equation*}
	Consequently, noting $$\frac{\alpha}{2}-\frac{2\alpha}{3}=-\frac{1}{6}\alpha<0,$$ we have
	\begin{equation*}
	\underset{(s,r)\rightarrow \partial\Theta}{\lim}\zeta(s,r)<\infty,
	\end{equation*}
giving us the desired result.
\end{proof}

\begin{Lemma}
For any $s\ge 1$, $r\ge 0$ and $\beta>0$, there exists a constant $C>0$ such that 
\begin{equation*}
sr^{\omega}e^{-\beta(s^{1/\omega}r)^{C_{3}}}\le C e^{-\frac{1}{2}\beta(s^{1/\omega}r)^{C_{3}}}.
\end{equation*}
\label{lemma64}
\end{Lemma}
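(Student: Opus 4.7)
The plan is to reduce the inequality to a one-variable boundedness statement by a change of variables that eliminates the separate dependence on $s$ and $r$. Specifically, I would set $t := s^{1/\omega} r \ge 0$, so that $s r^{\omega} = t^{\omega}$. Substituting into the left-hand side gives
\begin{equation*}
s r^{\omega} e^{-\beta (s^{1/\omega} r)^{C_{3}}} = t^{\omega} e^{-\beta t^{C_{3}}},
\end{equation*}
while the right-hand side becomes $C e^{-\tfrac{1}{2}\beta t^{C_{3}}}$. Hence the claim is equivalent to showing that
\begin{equation*}
g(t) := t^{\omega} e^{-\tfrac{1}{2}\beta t^{C_{3}}}
\end{equation*}
is bounded on $[0,\infty)$ by some constant $C = C(\omega,\beta,C_{3})$.

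Boundedness of $g$ is immediate: $g$ is continuous and nonnegative on $[0,\infty)$, with $g(0)=0$; as $t\to\infty$, the exponential factor $e^{-\tfrac{1}{2}\beta t^{C_{3}}}$ decays faster than any polynomial (since $\beta>0$ and $C_{3}>0$), so $g(t)\to 0$. Therefore $g$ attains its supremum on $[0,\infty)$, and we may set $C := \sup_{t \ge 0} g(t) < \infty$, which depends only on $\omega$, $\beta$, and $C_{3}$ but not on $s$ or $r$. Inverting the change of variables yields the desired inequality
\begin{equation*}
s r^{\omega} e^{-\beta (s^{1/\omega} r)^{C_{3}}} \le C e^{-\tfrac{1}{2}\beta (s^{1/\omega} r)^{C_{3}}}.
\end{equation*}

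There is no real obstacle here; the only mild subtlety is making sure that the constant $C$ truly does not depend on $s$ or $r$, which is guaranteed by the fact that after the substitution $t = s^{1/\omega} r$ the expression depends only on $t$. One could alternatively make the argument even more concrete by splitting the proof between, say, the polynomial regime using $t^\omega \le C' e^{\tfrac{1}{4}\beta t^{C_3}}$ for all $t\ge 0$, so that $g(t)\le C' e^{-\tfrac{1}{4}\beta t^{C_3}} \le C'$, but the compactness/continuity argument above seems the cleanest route.
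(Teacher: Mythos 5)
Your proof is correct, and it is in fact tighter than the paper's own argument. The paper bounds the same quantity $\mu(s,r):=sr^{\omega}e^{-\frac{1}{2}\beta(s^{1/\omega}r)^{C_{3}}}$ but treats it as a genuinely two-variable function on $\Theta=\{(s,r):s\ge 1,\,r\ge 0\}$ and argues boundedness by continuity plus finiteness of the limits as $(s,r)$ approaches the boundary of $\Theta$, deferring the actual computation to "calculations similar to Lemma~\ref{lemma63}." That style of argument is somewhat delicate on a non-compact domain (finite boundary limits alone do not force global boundedness without some uniformity), so the paper's write-up leaves real work implicit. Your observation that $sr^{\omega}=(s^{1/\omega}r)^{\omega}=t^{\omega}$ collapses the problem to the single-variable function $g(t)=t^{\omega}e^{-\frac{1}{2}\beta t^{C_{3}}}$ on $[0,\infty)$, where boundedness is immediate from $g(0)=0$, continuity, and $g(t)\to 0$ as $t\to\infty$; the constant $C=\sup_{t\ge 0}g(t)$ manifestly depends only on $\omega$, $\beta$, $C_{3}$. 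This buys a self-contained, fully rigorous one-line reduction, at no loss of generality (the hypothesis $s\ge 1$ is not even needed). There is no gap in your argument.
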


\begin{proof}[Proof of Lemma~\ref{lemma64}]
	Let
	\begin{equation*}
	\mu(s,r):=sr^{\omega}e^{-\frac{1}{2}\beta(s^{1/\omega}r)^{C_{3}}}.
	\end{equation*}
    It suffices to prove
	\begin{equation*}
	\underset{s\ge 1,r\ge 0}{\sup}\mu(s,r)<\infty.
	\end{equation*}
	Similar to $\zeta(s,r)$ in Lemma \ref{lemma63}, we only need to show
	\begin{equation*}
	\underset{(s,r)\rightarrow \partial\Theta}{\lim}\mu(s,r)<\infty,
	\end{equation*}
	where $\Theta:=\{(s,r):s\ge 1,r\ge 0\}$.
	We note that the above claim follows by calculations similar to that in the proof of Lemma~\ref{lemma63} and the fact that ${-\beta}/{2}<0$, thus providing the desired result.
\end{proof}

Before proceeding, we recall the definition of $\Theta_{\mbb{K},s}$ from~\eqref{eq:Thetadef} for convenience:
\begin{align*}
	\Theta_{\mbb{K},s}\coloneqq s\int_{\mbb{X}}e^{-C_{2}\tfrac{(p-4)}{4p}\left(\tfrac{d_{s}(x,\mbb{K})}{2}\right)^{C_{3}}}\mbb{Q}(dx).
\end{align*}
Note that by Lemma \ref{lemma61}, we also have that $\phi_{s}\le C\Theta_{\mbb{K},s}$, for a constant $C>0$.

\begin{Lemma}
	Suppose the conditions in Corollary \ref{MTPoi} hold. Then, there exists a constant $C>0$ such that
	\begin{equation*}
	s^{2}\int_{\mbb{X}^{2}}\psi_{s}(x_{1},x_{2})^{2}\mbb{Q}^{2}(d(x_{1},x_{2}))\le C\Theta_{\mbb{K},s}.
	\end{equation*}
	\label{lemma66}
\end{Lemma}

\begin{proof}[Proof of Lemma~\ref{lemma66}]
	Without loss of generality, we assume $d_{s}(x_{1},\mbb{K})\ge d_{s}(x_{2},\mbb{K})$. Similar reasoning can be used for the other case. According to Lemma \ref{lemma61},
	\begin{equation*}
	\psi_{s}(x_{1},x_{2})\le Ce^{-C_{2}\frac{p-4}{4p}\max\{d_{s}(x_{1},x_{2}),d_{s}(x_{1},\mbb{K})\}^{C_{3}}}.
	\end{equation*}
	Let
	\begin{equation*}
	L_{x_{2},s}:=s\int_{\mbb{X}}\psi_{s}(x_{1},x_{2})^{2}\mbb{Q}(dx_{1}).
	\end{equation*}
	It suffices to show there exists a constant $C>0$ such that
	\begin{equation*}
	L_{x_{2},s}\le Ce^{-C_{2}\frac{p-4}{2p}\left(\frac{d_{s}(x_{2},\mbb{K})}{2}\right)^{C_{3}}}.
	\end{equation*}
		Let $r\coloneqq \frac{1}{2}d(x_{2},\mbb{K})$ and note the fact that $\max\{x,y\}\ge x,\max\{x,y\}\ge y$ for any $x,y$, then
		\begin{equation*}
		\begin{aligned}
		L_{x_{2},s}&\le Cs\int_{\mbb{X}}e^{-C_{2}\frac{p-4}{2p}\max\{d_{s}(x_{1},x_{2}),d_{s}(x_{1},\mbb{K})\}^{C_{3}}}\mbb{Q}(dx_{1})\\&\le Cs\int_{B_{x_{2}}(r)}e^{-C_{2}\frac{p-4}{2p}d_{s}(x_{1},\mbb{K})^{C_{3}}}\mbb{Q}(dx_{1})+Cs\int_{\mbb{X}\backslash B_{x_{2}}(r)}e^{-C_{2}\frac{p-4}{2p}d_{s}(x_{1},x_{2})^{C_{3}}}\mbb{Q}(dx_{1})\\&\coloneqq CL_{1}+CL_{2}.
		\end{aligned}
		\end{equation*}
		By the triangle inequality, $2r\le d(x_{2},\mbb{K})\le d(x_{1},x_{2})+d(x_{1},\mbb{K})$, then when $d(x_{2},x_{1})\le r$, $d(x_{1},\mbb{K})\ge r$. Therefore, according to Lemma \ref{lemma62} and lemma \ref{lemma64}, there exists a constant $C>0$ such that
		\begin{align*}
		L_{1}&\le s\int_{B_{x_{2}}(r)}e^{-C_{2}\frac{p-4}{2p}(s^{1/\omega}r)^{C_{3}}}\mbb{Q}(dx_{1}) \le s\kappa r^{\omega}e^{-C_{2}\frac{p-4}{4p}(s^{1/\omega}r)^{C_{3}}}\le Ce^{-C_{2}\frac{p-4}{4p}(s^{1/\omega}r)^{C_{3}}}.
		\end{align*}
		According to Lemma \ref{lemma63}, there exists a constant $C>0$ such that
		\begin{align*}
		L_{2}&\le s\cdot\frac{C}{s}e^{-C_{2}\frac{p-4}{4p}\left(s^{1/\omega}r\right)^{C_{3}}}=Ce^{-C_{2}\frac{p-4}{4p}\left(s^{1/\omega}r\right)^{C_{3}}}.
		\end{align*}
		Then, 
		\begin{align*}
		L_{x_{2},s}&\le Ce^{-C_{2}\frac{p-4}{4p}\left(s^{1/\omega}r\right)^{C_{3}}}=Ce^{-C_{2}\frac{p-4}{4p}\left(\frac{d_{s}(x_{2},\mbb{K})}{2}\right)^{C_{3}}},
		\end{align*}
    giving us the desired result. 
\end{proof}

\begin{Lemma}\label{lemma67}
	Suppose the conditions in Corollary \ref{MTPoi} hold. Then, there exists a constant $C>0$ such that
	\begin{equation*}
	s^{3}\int_{\mbb{X}}\left(\int_{\mbb{X}}\psi_{s}(x_{1},x_{2})\mbb{Q}(dx_{2})\right)^{2}\mbb{Q}(dx_{1})\le C\Theta_{\mbb{K},s}.
	\end{equation*}
	\label{lemma67}
\end{Lemma}

\begin{proof}[Proof of Lemma~\ref{lemma67}]
	We can prove this lemma in a similar way as Lemma \ref{lemma66}. Let
	\begin{equation*}
	L_{x_{1},s}':=s\int_{\mbb{X}}\psi_{s}(x_{1},x_{2})\mbb{Q}(dx_{2}).
	\end{equation*}
	Similar to $L_{x_{2},s}$, one can show there exists a constant $C>0$ such that
	\begin{equation*}
	L_{x_{1},s}'\le Ce^{-C_{2}\frac{p-4}{8p}\left(\frac{d_{s}(x_{1},\mbb{K})}{2}\right)^{C_{3}}}.
	\end{equation*}
	Therefore,
	\begin{align*}
	s^{3}\int_{\mbb{X}}\left(\int_{\mbb{X}}\psi_{s}(x_{1},x_{2})\mbb{Q}(dx_{2})\right)^{2}\mbb{Q}(dx_{1})&=s\int_{\mbb{X}}\left(s\int_{\mbb{X}}\psi_{s}(x_{1},x_{2})\mbb{Q}(dx_{2})\right)^{2}\mbb{Q}(dx_{1})\\&\le Cs\int_{\mbb{X}}e^{-C_{2}\frac{p-4}{4p}\left(\frac{d_{s}(x_{1},\mbb{K})}{2}\right)^{C_{3}}}\mbb{Q}(dx_{1})\\&\le C\Theta_{\mbb{K},s}.
	\end{align*}
\end{proof}

With the above ingredients in place, we are finally in a position to prove Corollary~\ref{MTPoi}.
 
\begin{proof}[Proof of Corollary \ref{MTPoi}]
Recall the definition of $\theta_1$, $\theta_2$ and $\theta_3$ respectively in \eqref{theta1}, \eqref{theta2} and \eqref{theta3}. According to Lemma \ref{lemma61}, Lemma \ref{lemma66} and Lemma \ref{lemma67}, there exists a constant $C>0$ such that
    \begin{align*}
	\theta_{1}\le C\frac{(\Theta_{\mbb{K},s})^{\frac{1}{2}}}{\Var F_{s}},\quad \theta_{2}\le C\frac{(\Theta_{\mbb{K},s})^{\frac{1}{2}}}{\Var F_{s}},\quad
	\phi_{s}\le C\Theta_{\mbb{K},s}.
	\end{align*}
Therefore, according to \eqref{missingthm}, we complete the proof. 
\end{proof}

\subsection{Proof of Theorem \ref{MTbino}} 
The proof of the Poisson case applies mutatis mutandis to the binomial case. 
We now highlight the main changes. First, recall that we do not have a similar result like Theorem \ref{general} for Poisson case due to the fact that there is no nice counterpart of the second order Poincar\'{e} inequality (see~\cite{last2016normal}). Hence, Theorem \ref{MTbino} cannot follow from Theorem \ref{general}. Instead, we use \cite[Theorem 4.2]{lachieze2019normal} that provides an auxiliary result for the binomial case. While~\cite{lachieze2019normal} provided a general result for marked binomial point process, we state the following result for the unmarked binomial point process.

\begin{Theorem}[\cite{lachieze2019normal}]\label{Thm 7.1}
Let $n\ge 3$ and let $F$ be a functional of a binomial point process $\eta_{n}$ with $\mbb{E}F(\xi_{n})^{2}<\infty$. Assume that there are constants $c,\rho \in (0,\infty)$ such that 
\begin{align*}
\mbb{E}|D_{x}F(\xi_{n-1-|\mbb{A}|}\cup \mbb{A})|^{4+\rho}\le c,\quad \mbb{Q}-a.e., x\in\mbb{X}, \mbb{A}\subset\mbb{X}, |\mbb{A}|\le 2.
\end{align*}
Then, there is a constant $C:=C(c,\rho)\in (0,\infty)$ such that
\begin{align*}
d_{K}\left(\frac{F-\mbb{E}F}{\sqrt{\Var F}},N\right)\le C(S_{1}'+S_{2}'+S_{3}'),
\end{align*}
with 
\begin{align}
\Gamma_{n}&':=n\int_{\mbb{X}}\mbb{P}(D_{x}F(\xi_{n-1})\neq 0)^{\frac{\rho}{8+2\rho}}\mbb{Q}(dx),\nonumber\\
\psi_{n}'(x,x')&:=\underset{\mbb{A}\subset\mbb{X}:|\mbb{A}|\le 1}{\sup}\mbb{P}(D_{x,x'}F(\xi_{n-2-|\mbb{A}|})\neq 0)^{\frac{\rho}{8+2\rho}},\nonumber\\
S_{1}'&:=\frac{n}{\Var F}\sqrt{\int_{\mbb{X}^{2}}\psi_{n}'(x,x')\mbb{Q}^{2}(d(x,x'))},\nonumber\\
S_{2}'&:=\frac{n^{\frac{3}{2}}}{\Var F}\sqrt{\int_{\mbb{X}}\left(\int_{\mbb{X}}\psi_{n}'(x,x')\mbb{Q}(dx')\right)^{2}\mbb{Q}(dx)},\nonumber\\
S_{3}'&:=\frac{(\Gamma_{n}')^{\frac{1}{2}}}{\Var F}+\frac{\Gamma_{n}'}{(\Var F)^{\frac{3}{2}}}+\frac{\Gamma_{n}'+(\Gamma_{n}')^{\frac{3}{2}}}{(\Var F)^{2}}. \label{S_{3}'}
\end{align}
\end{Theorem}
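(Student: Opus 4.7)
The plan is to adapt the Stein method framework that underlies \cite{lachieze2019normal}, tailoring it to the unmarked binomial setting. The starting point is the representation $\xi_n = \sum_{i=1}^n \delta_{X_i}$ with i.i.d. $X_i \sim \mathbb{Q}$, so that $F(\xi_n) = F(X_1,\dots,X_n)$ is a symmetric function of i.i.d. inputs. For the Kolmogorov distance to $N$, we invoke Stein's equation $f_t'(w)-wf_t(w) = \mathbb{1}_{(-\infty,t]}(w) - \Phi(t)$, whose bounded solutions $f_t$ satisfy $\|f_t\|_\infty \le 1$ and $\|f_t'\|_\infty \le 1$, and reduce the problem to controlling $\mathbb{E}[f_t'(W) - Wf_t(W)]$ for $W := (F-\mathbb{E}F)/\sqrt{\Var F}$.

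The next step is to build an exchangeable pair via resampling: let $I$ be uniform on $\{1,\dots,n\}$ independent of everything, let $X_I'$ be an independent copy of $X_I$, and let $F' = F(\xi_n^{(I)})$ where $\xi_n^{(I)}$ denotes $\xi_n$ with $X_I$ replaced by $X_I'$. The difference $F-F'$ can be rewritten using first-order add-one cost operators $D_x F(\xi_{n-1})$ at $X_I$ and $X_I'$, and an exchangeability-plus-Taylor expansion of $\mathbb{E}[(F-F')(f_t(W)+f_t(W'))]$ recovers terms of the form $\mathbb{E}[f_t'(W)\cdot\text{(products of differences)}]$. A second Taylor expansion, done along a path that inserts or removes a single point, produces the iterated differences $D_{x,x'}F(\xi_{n-2-|\mathbb{A}|})$. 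At this stage one obtains a representation of $\mathbb{E}[f_t'(W) - Wf_t(W)]$ as a sum of three integrals: a bulk term controlled by first-order differences, and two cross terms controlled by second-order differences integrated over pairs $(x,x')$.

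To convert these integrals into the stated bound, apply H\"older's inequality with the $(4+\rho)$-moment hypothesis: for any $q \le 4+\rho$, $\mathbb{E}|D_xF(\xi_{n-1-|\mathbb{A}|}\cup \mathbb{A})|^q \le c^{q/(4+\rho)}\,\mathbb{P}(D_xF \ne 0)^{1-q/(4+\rho)}$, and similarly for the second-order operator. Plugging this into the bulk term with $q=3$ (and tracking the three normalizing factors produced by $\Var F$) yields the three summands in $S_3'$, with the exponent $\rho/(8+2\rho)$ emerging after a further Cauchy--Schwarz step that halves the exponent. Applying the analogous tail-moment trade-off to the cross terms, and then using Cauchy--Schwarz in the $(x,x')$ integration to split the diagonal and off-diagonal contributions, produces $S_1'$ and $S_2'$, respectively, with the supremum over the auxiliary set $\mathbb{A}$ absorbing the conditioning on the resampled coordinates.

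The main obstacle is the step that for the Poisson case is handled by the second-order Poincar\'e inequality of \cite{last2016normal}: there is no clean binomial analogue, so one cannot simply integrate second-order operators against $\lambda^2(dx_1,dx_2)$. Instead, the exchangeable-pair expansion has to be carried out to second order by hand, carefully tracking which configurations $\xi_{n-2-|\mathbb{A}|}\cup\mathbb{A}$ arise when one coordinate is resampled, since the size $|\mathbb{A}| \le 2$ in the hypothesis is precisely what the double expansion produces. Keeping the error terms aligned with the $\psi_n'(x,x')$ quantities, which are defined as a supremum over $|\mathbb{A}|\le 1$, is the most delicate combinatorial bookkeeping in the argument.
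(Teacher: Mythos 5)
The paper contains no proof of this statement: it is quoted (specialized to unmarked binomial processes) from \cite{lachieze2019normal}, Theorem 4.2, whose proof in turn rests on the binomial Berry--Esseen machinery of \cite{lachieze2017new}. So your proposal is necessarily a from-scratch reconstruction rather than something that can be matched line-by-line against the paper. Your reconstruction does follow the correct lineage --- the cited proofs are built on resampling/difference-operator arguments for functionals of i.i.d.\ inputs in the spirit of Chatterjee's method --- and your account of where the configurations with $|\mbb{A}|\le 2$ and the exponent $\rho/(8+2\rho)$ come from (H\"older with the $(4+\rho)$-moment followed by a Cauchy--Schwarz that halves the exponent) is accurate.

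There is, however, a genuine gap at the Kolmogorov-specific steps. The Stein solution $f_t$ for the indicator test function $\mathbf{1}_{(-\infty,t]}$ is only Lipschitz: $f_t'$ jumps at $t$ and $f_t''$ is not a bounded function, so the ``second Taylor expansion'' you invoke cannot be closed by a second-derivative bound. Controlling that remainder is exactly where the published proofs need a separate fourth-moment/concentration argument for $F-\mbb{E}F$ itself (the analogue of the $(\mbb{E}(F-\mbb{E}F)^4)^{1/4}$ factor in $\gamma_4$ of Theorem~\ref{6gamma}), and that argument is what produces the third summand $\bigl(\Gamma_{n}'+(\Gamma_{n}')^{3/2}\bigr)/(\Var F)^{2}$ of $S_{3}'$. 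Your sketch attributes all three summands of $S_{3}'$ to the H\"older step with $q=3$; that step only yields the $\Gamma_{n}'/(\Var F)^{3/2}$ term, so the bound as you derive it is incomplete. Relatedly, you never verify the approximate linearity condition for the exchangeable pair, nor do you bound the variance of the conditional expectation $\mbb{E}[(W'-W)^2\mid \xi_n]$; controlling that quantity via the second-order differences is the actual content of $S_{1}'$ and $S_{2}'$ and is where the $n$ versus $n^{3/2}$ prefactors and the diagonal/off-diagonal split originate. As written, the proposal names the right objects but omits the arguments that make the Kolmogorov bound go through.
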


\begin{Remark}
The exponent of $\Gamma_{n}'$ in the third component of the
sum on the right hand side of  \eqref{S_{3}'} is different than that of Poisson case, due to the fact that it is not derived by the counterpart of the second order Poincar\'{e} inequality. In fact, it is obtained by \cite[Theorem 4.3]{lachieze2019normal}.
\end{Remark}

Now, based on Theorem \ref{Thm 7.1}, we start to prove Theorem \ref{MTbino}.
\begin{proof}[Proof of Theorem \ref{MTbino}]
The proof of the binomial point process is similar to that of the Poisson case, based on Theorem \ref{Thm 7.1} for binomial case. We treat Theorem \ref{Thm 7.1} for binomial as a counterpart of \eqref{missingthm} for Poisson noting $D_{x,x'}F_{n}=D_{x}F_{n}^{x'}-D_{x}F_{n}$. Starting with this, one can follow the same procedure to get the required counterparts of Lemma \ref{lemma61} to Lemma \ref{lemma67} by changing $s$ as $n$. This provides the desired result.
\end{proof}

\section{Proofs for Section~\ref{sec:app}}\label{proof2}

\subsection{Total Edge Length of $k$-NN}
\begin{proof}[Proof of Theorem~\ref{totalknn}]
We begin with the Poisson point process case and consider the statistic $F^{k\text{-NN}}_s(\mcal{P}_{s}) \coloneqq  \sum_{x\in \mcal{P}_{s}}f_{s}(x,\mcal{P}_{s}),$ with $f_s$ as defined in~\eqref{eq:fsforknn}.\\

\emph{Step 1:} From Example \ref{eg:knn}, $F^{k\text{-NN}}_s(\mcal{P}_{s})$ is a strongly stabilizing functional with the radius of stabilization $R_{x}=4R$, with $R$ being defined as follows: for each $t>0$, construct six disjoint equilateral triangles $T_{j}(t)$, $1\le j\le 6$, such that the origin is a vertex of each triangle, such that each triangle has edge length $t$ and such that $T_{j}(t)\subset T_{j}(u)$ whenever $t<u$. Then, define $R$ to be the minimum $t$ such that each triangle $T_{j}(t)$ contains at least $k
+1$ points from $\mcal{P}_{s}$. Consequently, we have 
\begin{align*}
\mbb{P}(R> r)\le \mbb{P}\left(\mcal{P}_{s}\left(\cup_{j=1}^{6}T_{j}(r)\right)\le 6k)\right)\le \mbb{P}\left(\mcal{P}_{s}\left(B_{x}\left({r\sqrt{3}}/{2}\right)\right)\le 6k\right),
\end{align*}
where $\mcal{P}_{s}\left(B_{x}\left({r\sqrt{3}}/{2}\right)\right)$ follows Poisson distribution with parameter $s\mbb{Q}\left(B_{x}\left({r\sqrt{3}}/{2}\right)\right)$. According to the assumption \eqref{lowerQ} and a Chernoff bound for Poisson tail, \cite[Lemma 1.2]{penrose2003random}, we have that there exits a constant $c'>0$ such that 
\begin{align*}\label{knnRx}
\mbb{P}(R> r)&\le \mbb{P}\left(\mcal{P}_{s}\left(B_{x}\left(\frac{r\sqrt{3}}{2}\right)\right)\le 6k\right)\\&\le \mbb{P}\left(\text{Poi}\left(cs\left(\frac{r\sqrt{3}}{2}\right)^{\omega}\right)\le 6k\right)\\&\le 6ke^{-c' sr^{\omega}}.
\end{align*}
This implies the radius of stabilization $R_{x}=4R$ decays exponentially. Furthermore, we set $\mbb{K}=\mbb{X}$ for the $\mathbb{K}-$exponential bound.

\emph{Step 2:} As for the bounded moment condition, according to \cite[Lemma 5.5]{lachieze2019normal}, for some $p>4$, the bounded moment condition holds.

\emph{Step 3:} As for the variance condition, by the assumption \eqref{convars}, it is satisfied.

Therefore, according to Corollary \ref{coros}, we end the proof for the Poisson case. For the binomial case, it is similar to the Poisson case by considering a Chernoff bound for the binomial distribution \cite[Lemma 1.1]{penrose2003random} and \cite[Lemma 5.6]{lachieze2019normal}. 
\end{proof}

\subsection{Shannon Entropy Estimation}

\begin{proof}[Proof of Theorem \ref{weightedknn}]

We start by replacing the biased center $\mbb{E}F_{n}^{\text{SE}}(\xi_{n})$ by the true parameter $H(q)$. By triangle inequality, we have that
\begin{align*}
d_{K}\left(\frac{F_{n}^{\text{SE}}(\xi_{n})-H(q)}{\sqrt{\Var F_{n}^{\text{SE}}(\xi_{n})}},N\right)&\le \underbrace{d_{K}\left(\frac{F_{n}^{\text{SE}}(\xi_{n})-\mbb{E}F_{n}^{\text{SE}}(\xi_{n})}{\sqrt{\Var F_{n}^{\text{SE}}(\xi_{n})}},N\right)}_{\coloneqq~d_1}\\&\quad+\underbrace{d_{K} \left(\frac{F_{n}^{\text{SE}}(\xi_{n})-H(q)}{\sqrt{\Var F_{n}^{\text{SE}}(\xi_{n})}},\frac{F_{n}^{\text{SE}}(\xi_{n})-\mbb{E}F_{n}^{\text{SE}}(\xi_{n})}{\sqrt{\Var F_{n}^{\text{SE}}(\xi_{n})}}\right)}_{\coloneqq~d_{2}}.
\end{align*}
We first apply Corollary \ref{corobino} to bound $d_{1}$, using the three step approach. 

\emph{Step 1:} Similar to the total edge length of $k$-NN, following the proof of \cite[Lemma 6.1]{penrose2001central}, $F_{n}^{\text{SE}}(\xi_{n})$
is strongly stabilizing with the radius of stabilization $R_{x}$ decaying exponentially and we take $\mbb{K}=\mbb{X}$.

\emph{Step 2:} As for the moment condition, again, by \cite[Lemma 5.6]{lachieze2019normal}, the bounded moment condition holds for $p>4$. 

\emph{Step 3:} For the variance, note that for $d_{1}$, we have
\begin{align*}
\frac{F_{n}^{\text{SE}}(\xi_{n})-\mbb{E}F_{n}^{\text{SE}}(\xi_{n})}{\sqrt{\Var F_{n}^{\text{SE}}(\xi_{n})}}\overset{d}{=}\frac{nF_{n}^{\text{SE}}(\xi_{n})-\mbb{E}\left(nF_{n}^{\text{SE}}(\xi_{n})\right)}{\sqrt{\Var \left(nF_{n}^{\text{SE}}(\xi_{n})\right)}},
\end{align*}
where $\overset{d}{=}$ means equal in distribution. Therefore, we can consider the variance condition for the $nF_{n}^{\text{SE}}(\xi_{n})$ instead. According to \cite[Lemma 7]{berrett2019efficient}, there exits a constant $C>0$ such that for $k_{0}^{*}\le k\le  k_{1}^{*}$,
\begin{equation}
    \sup_{n>0}\frac{n}{\mathrm{Var}\left(nF_{n}^{\text{SE}}(\xi_{n})\right)}\le C.\label{varw}
\end{equation}

By Corollary \ref{corobino}, we immediately have that $d_1 \leq C_{0}'(k,p)/\sqrt{n}$.  Since $k$ also diverges as $n$ goes to $\infty$, it is necessary to calculate the constant $C_{0}'(k,p)$ more explicitly.  Therefore, we derive the following lemma. Note that the following lemma is a refined version of Theorem \ref{Thm 7.1} as it reveals how the constant $C_{0}'(k,p)$ is related to the constant $p$ and the functional $F$ thus $k$. 


\begin{Lemma}
	Assume there are constants $c>0,p_{0}>0$ such that 
	\begin{equation*}
	\mbb{E}|D_{x}F(\xi_{n-1-|\mbb{A}|}\cup \mbb{A})|^{4+p_{0}}\le c,\ |\mbb{A}|\le 2.
	\end{equation*}
	Then, there exists some constant $C$ not depending on $n$ nor $F$ such that
	\begin{equation*}
	d_{K}\left(\frac{F-\mbb{E}F}{\sqrt{\Var F}},N\right)\le C(S_{1}+S_{2}+S_{3}+S_{4}+S_{5}),
	\end{equation*}
	where 
	\begin{align*}
	S_{1}&:=c^{\frac{2}{4+p_{0}}}\frac{n}{\Var F}\sqrt{\int_{\mbb{X}^{2}}\psi_{n}(x,x')\mbb{Q}^{2}(d(x,x'))},\\
    S_{2}&:=c^{\frac{2}{4+p_{0}}}\frac{n^{\frac{3}{2}}}{\Var F}\sqrt{\int_{\mbb{X}}\left(\int_{\mbb{X}}\psi_{n}(x,x')\mbb{Q}(dx')\right)^{2}\mbb{Q}(dx)},\\
    S_{3}&:=c^{\frac{2}{4+p_{0}}}\frac{\sqrt{\Gamma_{n}}}{\Var F},\\
	S_{4}&:=\bigg(\sqrt{3}\max\left\{4\sqrt{2}c^{\frac{1}{4+p_{0}}}\frac{\Gamma_{n}^{\frac{1}{2}}}{(\Var F)^{\frac{1}{2}}},\sqrt{2}c^{\frac{1}{4+p_{0}}}\frac{\Gamma_{n}^{\frac{1}{4}}}{(\Var F)^{\frac{1}{2}}}+1\right\}\\& \quad \quad\quad \quad+c^{\frac{1}{4+p_{0}}}\frac{\Gamma_{n}^{\frac{1}{4}}}{n^{\frac{1}{4}}(\Var F)^{\frac{1}{2}}}\bigg)c^{\frac{3}{4+p_{0}}}\frac{\Gamma_{n}}{(\Var F)^{\frac{3}{2}}}+c^{\frac{4}{4+p_{0}}}\frac{\Gamma_{n}}{(\Var F)^{2}},\\
	S_{5}&:=c^{\frac{3}{4+p_{0}}}\frac{\Gamma_{n}}{(\Var F)^{\frac{3}{2}}}.
	\end{align*}
	Here,
	\begin{align*}
	\Gamma_{n}&:=n\int_{\mcal{X}}\mbb{P}(D_{x}F(\xi_{n-1})\neq 0)^{\frac{p}{8+2p_{0}}}\mbb{Q}(dx),\\
\psi_{n}(x,x')&:=\underset{\mbb{A}\subset\mbb{X}:|\mbb{A}|\le 1}{\sup}\mbb{P}(D_{x,x'}F(\xi_{n-2-|\mbb{A}|}\cup\mbb{A})\neq 0)^{\frac{4}{8+2p_{0}}}.
	\end{align*}
	\label{kboundlemma}
\end{Lemma}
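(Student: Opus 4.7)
The plan is to follow the proof of Theorem~\ref{Thm 7.1} (adapted from~\cite{lachieze2019normal}) but to track the dependence on $c$ and $p_0$ explicitly rather than hiding everything inside a lumped constant $C(c,\rho)$. The key device, used repeatedly, is the elementary H\"older interpolation: for any integer $1\le q\le 4+p_0$,
\begin{equation*}
\mbb{E}|D_xF|^q \;\le\; \bigl(\mbb{E}|D_xF|^{4+p_0}\bigr)^{q/(4+p_0)}\,\mbb{P}(D_xF\neq 0)^{1-q/(4+p_0)}\;\le\; c^{q/(4+p_0)}\,\mbb{P}(D_xF\neq 0)^{1-q/(4+p_0)},
\end{equation*}
and an entirely analogous bound for the second order cost $D_{x,x'}F$. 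This separates a ``deterministic'' $c^{q/(4+p_0)}$ factor from a ``support'' probability factor which, once exponents are matched, gets absorbed into $\Gamma_n$ or $\psi_n(x,x')$.

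First, I would open the proof of Theorem~\ref{Thm 7.1} and identify the six base quantities it controls (a binomial analogue of the $\gamma_1,\dots,\gamma_6$ of Theorem~\ref{6gamma}, with second-order cost $D_{x,x'}F$ in place of $D_{x_1,x_2}F$). For the terms feeding into $S_1$ and $S_2$ one has products of the form $(\mbb{E}(D_xF)^2(D_{x'}F)^2)^{1/2}(\mbb{E}(D_{x,x'',x'''}F)^2\cdots)^{1/2}$; applying Cauchy--Schwarz to decouple and then the H\"older bound above with $q=4$ yields the common prefactor $c^{2/(4+p_0)}$ and a probability factor that, after symmetrisation over $\mbb{A}$ with $|\mbb{A}|\le 1$, produces exactly $\psi_n(x,x')$ as defined. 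This gives $S_1$ and $S_2$.

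Second, I would handle the third-moment term: the integral $\int \mbb{E}|D_xF|^3\lambda(dx)$ is bounded using H\"older with $q=3$, producing the factor $c^{3/(4+p_0)}$ and the probability $\mbb{P}(D_xF\neq 0)^{(1+p_0)/(4+p_0)}\le \mbb{P}(D_xF\neq 0)^{p_0/(8+2p_0)}$ (absorbing the slack into the definition of $\Gamma_n$). Dividing by $(\Var F)^{3/2}$ yields exactly $S_5$. The term $S_3$ comes from the binomial counterpart of $\gamma_5$, which is $(\int \mbb{E}(D_xF)^4\lambda(dx))^{1/2}/\Var F$; H\"older with $q=4$ gives $c^{2/(4+p_0)}\Gamma_n^{1/2}/\Var F$.

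The main effort, and the only mildly delicate step, is $S_4$. This traces back to the $\gamma_4$-type term, which features $(\mbb{E}(F-\mbb{E}F)^4)^{1/4}\int(\mbb{E}(D_xF)^4)^{3/4}\lambda(dx)/(\Var F)^2$. I would use the binomial fourth-moment bound (the counterpart of \cite[Lemma~4.3]{last2016normal} used in the proof of Theorem~\ref{Thm 7.1}), which controls $\mbb{E}(F-\mbb{E}F)^4$ by a maximum of $\bigl(\int(\mbb{E}(D_xF)^4)^{1/2}\lambda(dx)\bigr)^2$, $\int\mbb{E}(D_xF)^4\lambda(dx)$ and $(\Var F)^2$; each of these integrals is bounded by the H\"older recipe above, producing powers $c^{2/(4+p_0)}\Gamma_n$, $c^{4/(4+p_0)}\Gamma_n$ and $c^{3/(4+p_0)}\Gamma_n$ respectively (with the additional $n^{-1/4}$ arising from the $(\int\cdots)^{1/4}$ outer exponent once $\Gamma_n$ carries the factor $n$). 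Assembling with the $\int(\mbb{E}(D_xF)^4)^{3/4}$ prefactor, bounded by $c^{3/(4+p_0)}\Gamma_n$, yields the three summands displayed in $S_4$; the final $c^{4/(4+p_0)}\Gamma_n/(\Var F)^2$ summand corresponds to the $(\Var F)^2$ branch of the fourth-moment maximum. The only bookkeeping obstacle is to verify that each exponent $c^{k/(4+p_0)}$ indeed matches what is stated in $S_4$, and that nothing absorbed into the generic $C$ secretly depends on $n$ or $F$; once this is done, the lemma follows by summing the five contributions.
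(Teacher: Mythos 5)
Your proposal follows exactly the route the paper takes: the paper's entire proof of this lemma is the single remark that it ``follows in a straightforward manner by [Proof of Theorem 4.2, lachieze2019normal]'', i.e.\ a re-run of the binomial second-order Poincar\'e argument (Theorem~\ref{Thm 7.1}, resting on \cite[Theorem 4.3]{lachieze2019normal} and \cite[Theorem 5.1]{lachieze2017new}) in which the moment bound $c$ is carried explicitly through each H\"older interpolation $\mbb{E}|D_xF|^{q}\le c^{q/(4+p_0)}\,\mbb{P}(D_xF\neq 0)^{1-q/(4+p_0)}$ rather than absorbed into $C(c,\rho)$. Your term-by-term accounting of where each power $c^{k/(4+p_0)}$ arises (including the fourth-moment maximum behind $S_4$) is precisely the intended bookkeeping, so the proposal is correct and matches the paper's approach.
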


The proof of Lemma \ref{kboundlemma} follows in a straightforward manner by \cite[Proof of Theorem 4.2]{lachieze2019normal}.  
Now, we proceed to calculate moment bounds to see how $k$ is related to the constant $C_{0}'$. Let
\begin{equation*}
\zeta_{i}:=\sum_{j=1}^{k}w_{j}\log\left(\frac{(n-1)V_{d}\rho_{j,i}^{d}}{e^{\Psi(j)}}\right).
\end{equation*}
Following \cite[Lemma 5.6]{lachieze2019normal}, by Jensen's inequality, for $p=4+p_{0}$, $p_{0}>0$,
\begin{align*}
    &\mbb{E}|D_{y}F_{n}^{\text{SE}}(\xi_{n-1-|\mbb{A}|}\cup\mbb{A})|^{4+p_{0}}\\&=\mbb{E}\left|\zeta_{k}(y,\xi_{n-1-|\mbb{A}|}\cup\{y\}\cup\mbb{A})+\underset{x\in \xi_{n-1-|\mbb{A}|}\cup\mbb{A}}{\sum}D_{y}\zeta_{k}(x,\xi_{n-1-|\mbb{A}|})\right|^{4+p_{0}}\\& \le 4^{3+p_{0}}\mbb{E}|\zeta_{k}(y,\xi_{n-1-|\mbb{A}|}\cup\{y\}\cup\mbb{A})|^{4+p_{0}}+4^{3+p_{0}}\underset{x\in\mbb{A}}{\sum}\mbb{E}|D_{y}\zeta_{k}(x,\xi_{n-1-|\mbb{A}|})|^{4+p_{0}}\\&+4^{3+p_{0}}\underset{x\in\xi_{n-1-|\mbb{A}|}}{\sum}\mbb{E}|D_{y}\zeta_{k}(x,\xi_{n-1-|\mbb{A}|})|^{4+p_{0}}.
\end{align*}
Then, following \cite[Proof of Lemma 5.6]{lachieze2019normal}, the  constant $c$ in Lemma \ref{kboundlemma} satisfies
\begin{equation*}
    c\lesssim 4^{3+p_{0}}c_{a}+4^{3+p_{0}}c_{b}+4^{5+\frac{3}{2}p_{0}}c_{b}^{\frac{4+p_{0}}{4+2p_{0}}}k^{\frac{4}{4+2p_{0}}},
\end{equation*}
where $A \lesssim B$ means there is a constant $C>0$ such that $A\le CB$. Note that according to the definition of weights \eqref{w}, there only exist finitely many terms in the sum of $\zeta_{i}$ and according to \cite[Section 3.2]{singh2016finite},
\begin{align*}
c_{a}:=\mbb{E}|\zeta_{k}(y,\xi_{n-1-|\mbb{A}|}\cup\{y\}\cup\mbb{A})|^{4+p_{0}}\lesssim \|w\|_{\infty}^{4+p_{0}}\mbb{E}\left|\log \left(\frac{(n-1)V_{d}\rho_{k,i}^{d}}{e^{\Psi(k)}}\right)\right|^{4+p_{0}}<\infty,
\end{align*}
and similarly,
\begin{align*}
c_{b}:&=\mbb{E}|D_{y}\zeta_{k}(x,\xi_{n-1-|\mbb{A}|})|^{4+p_{0}}<\infty.
\end{align*}
Then,
\begin{equation*}
c\lesssim k^{\frac{p_{0}}{4+2p_{0}}},
\end{equation*}
and by \cite[Theorem 4.3]{lachieze2019normal} and \cite[Theorem 5.1]{lachieze2017new}, the constant $C$ in Lemma \ref{kboundlemma} does not depend on either $n$ or $k$.

Therefore, according to Corollary \ref{corobino} and Lemma \ref{kboundlemma},
\begin{align}
    d_{1}\lesssim \frac{1}{\sqrt{n}}\times c^{\frac{4}{4+p_{0}}}  \lesssim \frac{1}{\sqrt{n}} k^{\frac{4p_{0}}{(4+p_{0})(4+2p_{0})}}\le \sqrt{\frac{k}{n}} .
    \label{d1}
\end{align}

Next, we focus on $d_{2}$, which is related to the bias, $\mbb{E}F_{n}^{\text{SE}}(\xi_{n})-H(q)$. Let 
\begin{align*}
h^{w}:=\frac{F_{n}^{\text{SE}}(\xi_{n})-\mbb{E}F_{n}^{\text{SE}}(\xi_{n})}{\sqrt{\Var F_{n}^{\text{SE}}(\xi_{n})}}\quad\text{and}\quad\Delta h:=\frac{H(q)-\mbb{E}F_{n}^{\text{SE}}(\xi_{n})}{\sqrt{\Var F_{n}^{\text{SE}}(\xi_{n})}}.
\end{align*}
Then we have
\begin{align*}
d_{2}:&=d_{K} \left(\frac{F_{n}^{\text{SE}}(\xi_{n})-H(q)}{\sqrt{\Var F_{n}^{\text{SE}}(\xi_{n})}},\frac{F_{n}^{\text{SE}}(\xi_{n})-\mbb{E}F_{n}^{\text{SE}}(\xi_{n})}{\sqrt{\Var F_{n}^{\text{SE}}(\xi_{n})}}\right)\\&=\underset{t\in\mbb{R}}{\sup}~\left|\mbb{P}\left(\frac{F_{n}^{\text{SE}}(\xi_{n})-H(q)}{\sqrt{\Var F_{n}^{\text{SE}}(\xi_{n})}}\le t\right)-\mbb{P}\left(\frac{F_{n}^{\text{SE}}(\xi_{n})-\mbb{E}F_{n}^{\text{SE}}(\xi_{n})}{\sqrt{\Var F_{n}^{\text{SE}}(\xi_{n})}}\le t\right)\right|\\&=\underset{t\in\mbb{R}}{\sup}~\left|\mbb{P}(h^{w}\le t+\Delta h)-\mbb{P}(h^{w}\le t)\right|\\&\le 2d_{1}+\underset{t\in\mbb{R}}{\sup}~\left|\Phi(t+\Delta h)-\Phi(t)\right|\\&= 2d_{1}+2\left(\Phi\left(\left|\frac{\Delta h}{2}\right|\right)-\frac{1}{2}\right)\\&\le 2d_{1}+1-e^{-\frac{1}{2}(\Delta h)^{2}-\sqrt{\frac{2}{\pi}}|\Delta h|}\\& \lesssim d_{1}+|\Delta h|,
\end{align*}
where $\Phi$ is the c.d.f. of the standard normal distribution and actually, we applied a tight bound for standard normal distribution function according to \cite{mastin2013log}. Therefore,
\begin{equation*}
    d_{K} \left(\frac{F_{n}^{\text{SE}}(\xi_{n})-H(q)}{\sqrt{\Var F_{n}^{\text{SE}}(\xi_{n})}},N\right)\lesssim \max\{d_{1},|\Delta h|\}.
\end{equation*}

The bias of $F_{n}^{\text{SE}}(\xi_{n})$ satisfies the following bound according to \cite[Corollary 4]{berrett2019efficient}: for every $\epsilon>0$, 
\begin{equation}
    \underset{f\in\mcal{F}_{d,\theta}}{\sup}~|\mbb{E}_{f}\hat{H}_{n}^{w}-H(f)|=O\left(\max\left\{\left(\frac{k}{n}\right)^{\frac{\alpha}{\alpha+d}-\epsilon},\left(\frac{k}{n}\right)^{\frac{2(\lfloor\frac{d}{4}\rfloor+1)}{d}},\left(\frac{k}{n}\right)^{\frac{\beta}{d}}\right\}\right).\label{biasw}
\end{equation}
Note that $\alpha>d$ and $\beta>\frac{d}{2}$. With \eqref{varw} and \eqref{biasw} , by elementary algebraic manipulation we have that
\begin{equation}
\begin{aligned}
    |\Delta h|&\lesssim n^{-\frac{1}{2}}\times \left(\frac{k}{n}\right)^{\frac{1}{2}}\lesssim \frac{k^{\frac{1}{2}}}{n}.\label{Delta}
\end{aligned}
\end{equation}
Comparing \eqref{d1} and \eqref{Delta}, we obtain the desired result.

\end{proof}



\subsection{Euler Characteristic}
\begin{proof}[Proof of Theorem~\ref{thm:eulerc}]
	We follow the 3-step procedure. 
 
    \emph{Step 1:} According to Example \ref{egec}, the Euler characteristic is strongly stabilizing with the radius of stabilization $R_{x}=2r$. Clearly, as a constant, $R_{x}$ can be bounded exponentially as one can choose $c_{1}$ large enough and $c_{2},c_{3}>0$ such that for $0\le t\le 2r$,
	\begin{equation}
	1\le c_{1}e^{-c_{2}(n^{1/d}t)^{c_{3}}}.\label{exdEC}
	\end{equation}
    The similar argument also holds for Poisson case. Also, we take $\mbb{K}=[0,1]^{d}$.
	
	\emph{Step 2:} Moreover, for the bounded moment condition, note that for $p>4$,	there exists a constant $C(d,T)>0$ such that
	\begin{align}
	\underset{n> 0,x\in[0,1]^{d}}{\sup}~\mathbb{E}|D_{x}F_{n}^{\text{EC}}(\xi_{n})|^p\le & \mathbb{E}\bigg|\sum_{\ell =0}^{n}\#\{\sigma\in K_{r}\left(n^{\frac{1}{d}}(\xi_{n}\cup\{x\})\right):	\label{60} \\&\nonumber \quad \quad \quad \quad\text{$\sigma$ is an $\ell$-simplex intersecting with $\{x \}$}\}\bigg|^{p}\\ \nonumber
	\le & \left|\sum_{\ell =0}^{n}\sum_{\{j_{1},...,j_{\ell}\}\subset\{1,2,...,n\}}\left(\mathbb{P}\left(x_{j_{1}}\in B_{x}\left(n^{-\frac{1}{d}}r\right)\right)\right)^{\ell}\right|^{p}\\ \nonumber
	\le & \left|\sum_{\ell=0}^{n}\binom{n}{\ell}\frac{(C(d,T)\|q\|_{\infty}r^{d})^{\ell}}{n^{\ell}}\right|^{p}\\ \nonumber
	\le & \left|\sum_{\ell=0}^{n}\frac{(C(d,T)\|q\|_{\infty}r^{d})^{\ell}}{\ell!}\right|^{p}\\ \nonumber
	\le &(e^{C(d,T)\|q\|_{\infty}r^{d}})^{p}<\infty.
	\end{align}
	Also, by \cite[Lemma 4.1]{yogeshwaran2017random} and \cite[Proof of Lemma 4.2]{krebs2021approximation}, 
	\begin{equation*}
	\underset{n>0,x\in[0,1]^{d}}{\sup}~\mathbb{E}|F_{n}^{\text{EC}}(\mcal{P}_{n})|^p<\infty.
	\end{equation*}
	
	Moreover, consider
	\begin{align*}
	D_{x}F_{n}^{\text{EC}}(\xi_{n})^{x^*}&= \chi\left(K_{r}\left(n^{\frac{1}{d}}(\xi_{n}\cup\{x\}\cup\{x^{*}\})\right)\right)-\chi\left(K_{r}\left(n^{\frac{1}{d}}\xi_{n}\right)\right)\\&+\chi\left(K_{r}\left(n^{\frac{1}{d}}\xi_{n}\right)\right)-\chi\left(K_{r}\left(n^{\frac{1}{d}}(\xi_{n}\cup\{x^{*}\})\right)\right),
	\end{align*}
	and following a similar argument like \eqref{60}, we have
	\begin{equation*}
	\underset{n>0,x\in[0,1]^{d},x^{*}\in[0,1]^{d}}{\sup}~\mbb{E}|D_{x}F_{n}^{\text{EC}}(\xi_{n})^{x^*}|^{p}<\infty.
	\end{equation*}
	The bounded moment condition is satisfied and similar arguments can be utilized to show for Poisson case.
	
	\emph{Step 3:} According to \cite[Theorem 2.1]{penrose2001central} and \cite[proposition 4.6]{krebs2021approximation}, there exists a constant $C>0$ such that
	\begin{equation*}
	\underset{n>0}{\sup}~\frac{n}{\Var F_{n}^{\text{EC}}(\xi_{n})}\le C,
	\end{equation*}
    and it also holds for Poisson case.
    
    Therefore, according to Corollary \ref{coros} and Corollary \ref{corobino}, we complete the proof.
\end{proof}

\subsection{Edge Length Statistic of the Minimum Spanning Tree}
\begin{proof}[Proof of Theorem \ref{mst}]
According to \cite[Theorem 3.3]{penrose2005multivariate}, the total edge length $M(V)$ of the minimal spanning tree satisfies the strong stabilization with a radius of stabilization $R_{x}$ almost surely finite. Without knowing the tail probability of $R_{x}$, we need to use Theorem \ref{general}. Set
\begin{align*}
    A_{x_{1}}=B_{n}\cap \{x_{1}+n^{\alpha}B_{0}\},
\end{align*}
where $0<\alpha<1$ and $\{x+A\}:=\{x+y:y\in A\}$ for any set $A$. According to \cite[Proposition 3.7]{lachieze2020quantitative}, for any $q_{0}>0$ and $x\in B_{n}$, there exists a constant $C_{q_{0}}>0$ independent of $n,x$ such that uniformly
\begin{equation}
    \mbb{E}|D_{x}F_{B_{n}}^{\text{MST}}(\mcal{P}(\lambda))|^{q_{0}}\le C_{q_{0}}.
    \label{firstordermst}
\end{equation}
The same bound also holds for $D_{x}F_{B_{n}}^{y,\text{MST}}$

As for the second order cost, according to \cite[Proposition 3.11]{lachieze2020quantitative}\footnote{This proposition is provided for $q_{0}=1$. However, a closer examination of the proof reveals that it can be extended to any $q_{0}\ge 1$ in a straightforward manner.}, for any $q_{0}\ge 1$, there exist constants $E_{0}>0,E_{1}>0,E_{2}>0$ such that for any $x_{1}\in B_{n}$ with $d(x_{1},\partial B_{n})>n^{\alpha}$,
\begin{equation}
    \mathbb{E}|D_{x_{1}}F_{B_{n}}^{\text{MST}}(A_{x_1})-D_{x_{1}}F_{B_{n}}^{\text{MST}}(\mcal{P}(\lambda))|^{q_{0}}\le 
    \left\{
        \begin{aligned}
        &E_{0}n^{-E_{1}},\quad \text{if}\ d=2,\\
        &E_{0}(\log n)^{-E_{2}},\quad  \text{if}\ d\ge 3.
        \end{aligned}
        \right.
        \label{secondordermst}
\end{equation}
The same bound also holds for $D_{x_{1}}F_{B_{n}}^{y,\text{MST}}$. On the other hand, the variance bound is directly from \cite[Propostion 3.9]{lachieze2020quantitative}: there exists a constant $c>0$,
\begin{equation*}
    \Var F_{B_{n}}^{\text{MST}}(\mcal{P}(\lambda))\ge c|B_{n}|\asymp  n^{d},
\end{equation*}. 

Then, plugging these two bounds \eqref{firstordermst} and \eqref{secondordermst} in Theorem \ref{general}, we get that there exist constants $C_{\text{first}}>0$ and $C_{\text{second}}>0$ such that, 
\begin{align*}
    \sum_{j=1}^{2}b_{j}(x_{1},A_{x_{1}})^{\frac{1}{4}}\le C_{\text{first}},
\end{align*}
and for $x_{3}\in B_{n}$, $d(x_{3},\partial B_{n})>n^{\alpha}$ and $d(x_{1},x_{3})\ge n^{\alpha}$,
\begin{align*}
    \sum_{j=3}^{5}b_{j}(x_{3},x_{1},A_{x_{3}})^{\frac{1}{4}}\le \left\{
        \begin{aligned}
        &2E_{0}n^{-E_{1}},\quad \text{if}\ d=2,\\
        &2E_{0}(\log n)^{-E_{2}},\quad \text{if}\ d\ge 3,
        \end{aligned}
        \right.
\end{align*}
and for other $(x_{1},x_{3})\in B_{n}\times B_{n}$, the points near the boundary of $B_{n}$, similar to \cite[proof of Proposition 3.11]{lachieze2020quantitative}, we just apply uniform moment bounds for the flexible cost functions and get 
\begin{align*}
    \sum_{j=3}^{5}b_{j}(x_{3},x_{1},A_{x_{3}})^{\frac{1}{4}}\le C_{\text{second}}.
\end{align*}

Therefore, by the fact that 
\begin{align*}
    |\{(x,y)\in B_{n}^{2}:A_{x}\cap B_{n}\cap A_{y}\neq \emptyset\}|\asymp n^{d}n^{d\alpha},
\end{align*}
then we have, 
\begin{align*}
    \gamma_{1}'\lesssim\left\{
       \begin{aligned}
        &\frac{n^{\frac{d\alpha}{2}}}{n^{\frac{d}{2}}}C_{\text{second}}+2E_{0}n^{-E_{1}}, \quad \text{if}\ d=2,\\
        &\frac{n^{\frac{d\alpha}{2}}}{n^{\frac{d}{2}}}C_{\text{second}}+2E_{0}(\log n)^{-E_{2}}, \quad \text{if}\ d\ge 3.
        \end{aligned}
        \right.
\end{align*}
Similarly, one can derive
\begin{align*}
\gamma_{2}'\lesssim\left\{
       \begin{aligned}
        &\frac{n^{\frac{d\alpha}{2}}}{n^{\frac{d}{2}}}C_{\text{second}}+2E_{0}n^{-E_{1}}, \quad \text{if}\ d=2,\\
        &\frac{n^{\frac{d\alpha}{2}}}{n^{\frac{d}{2}}}C_{\text{second}}+2E_{0}(\log n)^{-E_{2}}, \quad \text{if}\ d\ge 3,
        \end{aligned}
        \right.
\end{align*}
and
\begin{align*}
\gamma_{3}',\gamma_{4}',\gamma_{5}'\lesssim \frac{1}{n^{\frac{d}{2}}},
\end{align*}
and
\begin{align*}
\gamma_{6}'\lesssim\left\{
       \begin{aligned}
        &\frac{n^{\frac{d\alpha}{2}}}{n^{\frac{d}{2}}}C_{\text{second}}+2E_{0}n^{-E_{1}}, \quad \text{if}\ d=2,\\
        &\frac{n^{\frac{d\alpha}{2}}}{n^{\frac{d}{2}}}C_{\text{second}}+2E_{0}(\log n)^{-E_{2}}, \quad \text{if}\ d\ge 3,
        \end{aligned}
        \right.
\end{align*}
Therefore, we complete the proof by invoking Theorem \ref{general}.
\end{proof}
\subsection*{Acknowledgement.} We sincerely thank Chinmoy Bhattacharjee and Matthias Schulte for several helpful discussions, clarifications regarding the literature, and valuable feedback on this manuscript. We gratefully acknowledge support for this project from the National Science Foundation via grant NSF-DMS-2053918. 

\bibliographystyle{alpha}
\bibliography{example}

\appendix

\section{Background on TDA}\label{sec:tda}

\begin{Definition}[Simplicial Complexes]\label{SC}
 An {\em abstract simplicial complex} over a (finite) vertex set $\mbb{A}_{n}:=\{a_1,\ldots,a_n\}$ is a collection $S$ of subsets of $\mathbb{A}_n$ with the properties that 
 \begin{itemize}
\item[(i)] $\{a_i\} \in S, i = 1,\ldots,n$,
\item[(ii)] $\sigma \in S$ and $\tau \subset \sigma$ implies that $\tau \in S$. 
\end{itemize}
Every $\tau \subset \sigma$ is called a {\em face} of $\sigma$. Every $\sigma \in S$ with $|\sigma| = \ell+1, \ell \ge 0,$ is called $\ell$-{\em simplex}. 
\end{Definition}

Note that the vertices do not necessarily have to be elements of a Euclidean space. If they are (affinely independent) elements of $\mathbb{R}^d$, one can think of every simplex of order $\ell \le d$ as a convex hull of $\ell+1$ (affinely independent) vertices, so that 0-simplices are points, 1-simplices are lines, 2-simplices are triangles, etc. The following two types of simplicial complexes, the Vietoris-Rips complex (VR complex) and the \v{C}ech complex, are widely used in TDA.

\begin{Definition}[VR Complex]\label{VR}
Following the definition \ref{SC}, let the vertex set $\mbb{A}_{n}$ be in a metric space with the metric $d$. Then, the VR complex $\text{VR}_{r}(\mbb{A}_{n})$ for a given postive real number $r>0$ is a collection of simplices, where a simplex $\sigma\in \text{VR}_{r}(\mbb{A}_{n})$ if and only if for any pair of vertices $a_{i},a_{j}\in \sigma$, $d(a_{i},a_{j})<r$.
    
\end{Definition}

\begin{Definition}[\v{C}ech Complex]\label{Cech}
Following the definition \ref{SC}, let the vertex set $\mbb{A}_{n}$ be in a metric space with the metric $d$. Then, the \v{C}ech complex $C_{r}(\mbb{A}_{n})$ for a given postive real number $r>0$ is a collection of simplices, where a simplex $\sigma:=\{a_{i}\}_{i\in I}\in C_{r}(\mbb{A}_{n})$ for some $I\subset\{1,2,...,n\}$ if and only if for $\underset{i\in I}{\cap} B_{a_{i}}\left(\frac{r}{2}\right)\neq \emptyset$.
    
\end{Definition}

\begin{Definition}[Filtrations]
A filtration $\mcal{S}$ of a simplicial complex $S$ is a nested sequence of simplicial complexes $\emptyset = S_0 \subset S_1 \subset \cdots \subset S_{I} = S$, where $S_i = S_{i-1} \cup \sigma_i, i = 1,\ldots, I$ for some $\sigma_i \in S.$  A filtration is thus equivalent to an ordering of the simplices in the complex. Usually, a filtration is given in form of a filtration function $\psi:S \to \mathbb{R}$ that assigns a real value $\psi(\sigma)$ to each simplex $\sigma \in S$. The filtration itself is then defined via $S(r) = \{\sigma \in S: \psi(\sigma) \le r\}$. Note that while $r$ is a continuous parameter, there are only finitely many values of $r$ at which the complex is changing for a simplex over a finite set of vertices as considered here. Additionally, the parameter $r$ here is called as the filtration parameter or the filtration time.
\end{Definition}
\begin{Remark}
The real number $r$ in Definition \ref{VR} and \ref{Cech} is called the filtration parameter/time for the VR complex and the \v{C}ech complex respectively.
\end{Remark}



\end{document}